\documentclass[10pt]{amsart}

\usepackage{mathtools}
\usepackage[round]{natbib}
\setcitestyle{numbers,square}
\usepackage{hyperref}
\usepackage{xcolor}
\usepackage[normalem]{ulem}

\usepackage{accents}



\sloppy

\setlength{\topmargin}{-.2in} \setlength{\marginparsep}{-2in}
\setlength{\marginparwidth}{1in} 
\setlength{\textwidth}{6.8in} 
\setlength{\textheight}{8.2in}
\setlength{\oddsidemargin}{-.2in}
\setlength{\evensidemargin}{-.2in}

\newcommand\redsout{\bgroup\markoverwith{\textcolor{red}{\rule[0.5ex]{2pt}{0.4pt}}}\ULon}

\newcommand{\E}{\mathbb{E}}
\newcommand{\N}{\mathbb{N}}
\newcommand{\Z}{\mathbb{Z}}
\newcommand{\Q}{\mathbb{Q}}
\newcommand{\R}{\mathbb{R}}

\newcommand{\Pb}{\mathbb{P}}
\newcommand{\tp}{t^{\prime}}
\newcommand{\pa}{a^{\prime}}

\newcommand{\ps}{s^{\prime}}

\newcommand{\xp}{x^{\prime}}

\newcommand{\ve}{\varepsilon}

\newcommand{\hno}{\frac{\partial^{}h_n}{\partial x}}
\newcommand{\hnt}{\frac{\partial^{2}h_n}{\partial x^2}}
\newcommand{\htt}{\frac{\partial^{3}h_n}{\partial x^3}}

\newcommand{\hsn}{\frac{\partial^{2}h_n}{\partial s\partial x}}
\newcommand{\hnn}{\frac{\partial^{3}h_n}{\partial s\partial x^2}}
\newcommand{\hpo}{\dfrac{\partial^{}h}{\partial x}}
\newcommand{\hpp}{\dfrac{\partial^{2}h}{\partial x^2}}
\newcommand{\htp}{\dfrac{\partial^{3}h}{\partial x^3}}

\newcommand{\hps}{\dfrac{\partial^{}h}{\partial s}}
\newcommand{\hsp}{\dfrac{\partial^{2}h}{\partial s\partial x}}

\def\={{\;\mathop{=}\limits^{\text{(law)}}\;}}

\newtheorem{theorem}{Theorem}[section]
\newtheorem{prop}[theorem]{Proposition}
\newtheorem{lemma}[theorem]{Lemma}
\newtheorem{defi}[theorem]{Definition}
\newtheorem{corollary}[theorem]{Corollary}

\newtheorem{remark}[theorem]{Remark}

\numberwithin{equation}{section}

\usepackage[english]{babel}


%

\title[Regularising properties of Brownian sheet paths]
{Stochastic integration with respect to local time of the Brownian sheet and regularising properties of Brownian sheet paths}

\author[A.-M. Bogso]{Antoine-Marie Bogso}
\address{University of Yaounde I\\
	Faculty of Sciences, Department of Mathematics\\
	P.O. Box 812, Yaounde, Cameroon \\
	and African institute for Mathematical Sciences Ghana, P.O. Box LGDTD 20046, Summerhill Estates, East Legon Hills, Santoe, Acrra}
\email{antoine.bogso@facsciences-uy1.cm, antoine@aims.edu.gh}           

\author[M. Dieye]{Moustapha Dieye}

\address{African institute for Mathematical Sciences Ghana, P.O. Box LGDTD 20046, Summerhill Estates, Eat Legon Hills, Santoe, Acrra}

\email{moustapha@aims.edu.gh}

\author[O. Menoukeu Pamen]{Olivier Menoukeu Pamen}
\address{Institute for Financial and Actuarial Mathematics (IFAM) \\
	Department of Mathematical Sciences, University of Liverpool \\
	Liverpool L69 7ZL, UK \\
	and AIMS Ghana}
\email{menoukeu@liverpool.ac.uk}
\thanks{The project on which this publication is based has been carried out with funding provided by the Alexander von Humboldt Foundation, under the programme financed by the German Federal Ministry of Education and Research entitled German Research Chair No 01DG15010.}
\subjclass{Primary 60G17}

\keywords{Brownian sheet, SDEs on the plane, path by path uniqueness}

\date{\today}

\begin{document}

	\begin{abstract}
		In this work, we generalise the stochastic local time space integration introduced in \cite{Ei00} to the case of Brownian sheet. 
			This allows us to prove a generalised two-parameter It\^o formula and derive Davie type inequalities for the Brownian sheet. Such estimates are useful to obtain regularity bounds for some averaging type operators along Brownian sheet curves.    
	\end{abstract}
	
	\maketitle
	 
		\section{Introduction}
	The goal of this paper is three fold: we first extend the stochastic local time-space calculus introduced by Eisenbaum \cite{Ei00} to the multidimensional standard Brownian sheet. We then obtain the It\^o formula under weaker condition on the function. Finally, we derive several estimate of the ``averaging type operator" introduced in \cite{TW03} and further studied in \cite{CG16}. 
	
	The notion of integration with respect to space-time local time to our knowledge was introduced in \cite{Ei00}. In that work, the author defined a stochastic integral of Borel measurable functions on $\R_+\times\R$ with respect to the local time process of a linear Brownian motion. This allowed the author to extend the It\^o formula to a large class of differentiable random functions with locally bounded derivatives and to define the local time of Brownian motion on any Borelian curve. The previous result were extended in \cite{Ei06} to the case of L\'evy process and reversible semimartingales in \cite{Ei07}. This powerful tool led to many interesting generalisations of the It\^o formula (see for example \cite{ElTrZh07, FeZh07, peskir05} and references therein). 
	
	In order to write a local time-space integral in the two-parameter setting, we consider the local time of the Brownian sheet with respect to the Lebesgue measure on $\R_+^2$ defined by Walsh \cite{Wa78}. Observe that the Lebesgue measure on $\R_+^2$ is the measure induced by the quadratic variation of the Brownian sheet. As in \cite{Ei00}, this local time can be expressed as the sum of a forward and a backward It\^o integral. Using a representation of the backward It\^o integral, we define the stochastic integral with respect to the local time for elements of a Banach space. This enables us to write a counterpart of Eisenbaum's local time-space integration formula and a generalised It\^o formula for the Brownian sheet. A key step in proving this result is the representation formula of the reversal process of the Brownian sheet in one parameter at a fixed time obtained by Dalang and Walsh \cite[Theorem 6.1]{DaW02}. Let us mention that Sanz \cite{Sa88} took advantage of the ideas developed in \cite{Wa78} to define a notion of local time for a class of continuous two-parameter martingales with respect to the quadratic variation.

	It is worth mentioning that there exists another notion of local time of the Brownian sheet $(W_{s,t},(s,t)\in \R_+^2)$ with respect to the measure on $\R_+^2$ induced by quadratic variation $\langle J\rangle$ of the martingale $J=(J_{s,t},(s,t)\in \R_+^2)$ given by
	\begin{align*}
		J_{s,t}=\int_0^t\int_0^s\int_0^t\int_0^s\mathbf{1}_{\{u<v,\xi>\zeta\}}\mathrm{d}W_{u,\xi}\mathrm{d}W_{v,\zeta}.
	\end{align*}
	This concept was introduced in \cite{CW75} and led to the first results on local time for two-parameter processes. Walsh \cite{Wa78} points the difference between the two definitions of local time. A complete study of local time with respect to multi-parameter analogue of $\langle J\rangle$ for the multi-parameter $\R^d$-valued Brownian sheet was given by Imkeller \cite{Im84}. The author also established a multi-parameter stochastic calculus and a multiparameter It\^o-Tanaka formulas (see \cite{Im84b,Im84c}). Nualart \cite{Nu84b} proved existence of a local time with respect to the measure induced by $\langle J\rangle$ for continuous fourth-power integrable (two-parameter) martingales that vanish on the boundary of $\R_+^2$.

	Finally, we consider the following average type transfrom 
	averaging transforms of type  
	\begin{align}\label{eqav1}
		T_{I}^{W}[b](s,x)=\int_{I}b(t,x+W_{s,t})dt,\,\forall\,(s,x)\in\R_+\times\R^d,
	\end{align}
	where $I$ is a finite sub-interval of $\R_+$,
	$b:\,\R_+\times\R^d\to\R^d$ is a 
	bounded Borel measurable function  
	and $(W_{s,t},(s,t)\in \R^2_+)$ is a $\R^d$-valued Brownian sheet given on some filtered probability space $(\Omega,\mathcal{F},(\mathcal{F}_{s,t},(s,t)\in \R_+^2),\Pb)$.	The terminology {\it averaging operator} is borrowed from Catellier and Gubinelli \cite{CG16} (see also Galeati and Gubinelli \cite{GG21}) where the authors obtain regularising estimates of 
	the averaging transform along the paths of the $d$-dimensional fractional Brownian motion $(B^H_t,t\geq0)$ with Hurst parameter $H$ defined by
	\begin{align*}
		T^{B^{H}}_t[b](x)=\int_0^tb(x+B^{H}_s)ds,\,\forall\,(t,x)\in\R_+\times\R^d
	\end{align*}
	and take advantage of these estimates to establish existence and uniqueness of solution to the ordinary differential equation (ODE) in $\R^d$, $d\in\N$,
	\begin{align}\label{eqODE}
		\left\{
		\begin{array}{ll}
			\dot{x}(t)=b(t,x(t))+\dot{w}(t),&t\in\R_+,\\
			x(0)=x_0,
		\end{array}
		\right.
	\end{align}	
	where $x,w\in\mathcal{C}([0,1],\R^d)$, $b$ is a time-dependent vector field which may be only a distribution in the space variable and the dot denotes differentiation with respect to time. These results are counterparts of those obtained by Davie \cite{Da07} for the ODE \eqref{eqODE}, where $w$ is a $\R^d$-valued Brownian path and $b$ is a bounded Borel measurable function. One key result in \cite{Da07} is that if $b:\,[0,1]\times\R^d\to\R^d$ is a bounded Borel measurable function, then, for almost all $\R^d$-valued Brownian paths $w$, the function $T^B_t[b]$ is almost Lipschitz continuous with a modulus of continuity of the type $|x|\log^{1/2}(1/|x|)$.	To be more precise, consider for a moment the integral form of ODE \eqref{eqODE} given by
	\begin{align}\label{eqODE2}
		x_t=x_0+\int_0^tb(s,x_s)ds+w_t,\quad t\in\R_+.
	\end{align}
	Setting $w^{x_0}=x_0+w$ and $y_t=x_t-w_t-x_0$, Equation \eqref{eqODE2} can be rewritten as
	\begin{align}\label{eqODE3}
		y_t=\int_0^tb(s,y_s+x_0+w_s)ds=T_t^{w^{x_0}}[b](y),\quad t\in\R_+, 
	\end{align}
	where $w^{x_0}=x_0+w$ and $y_t=x_t-w_t-x_0$. Any solution $y$ of \eqref{eqODE3} is a fixed point of the map $z\longmapsto T_{\cdot}^{w^{x_0}}[b](z)$ from $\mathcal{C}(\R_+,\R^d)$ to itself. The existence and uniqueness of such a fixed point rely on specific regularity properties of the operator $T_{\cdot}^{w^{x_0}}[b]$. Davie \cite{Da07} exploited the almost Lipschitz regularity of the averaging operator to prove uniqueness of fixed point for any $\R^d$-valued Brownian path $w$ in a set of full mass. Catellier and Gubinelli \cite{CG16} took advantage of almost sure continuity of $T_{\cdot}^{w^{x_0}}[b]$ for a large class of H\"older-Besov distributions $b$ and a set of fractional Brownian perturbations of full mass to prove existence and uniqueness of fixed point. Galeati and Gubinelli \cite{GG21} provided smoothness conditions on $T_{\cdot}^{w^{x_0}}[b]$, under which the ODE admits a flow with prescribed regularity. They also establish well-posedness for certain perturbed transport type partial differential equations (PDEs) under suitable smoothness properties of the averaging map. Chouk and Gubinelli \cite{ChG15} analysed the regularising properties of fractional Brownian paths $w$ in terms of the averaging operator $T^w_t$ in the context of non-linear dispersive PDEs modulated by an irregular signal. In particular, they obtained global well-posedness for the modulated Non-linear Shr\"ondinger equation with generic power nonlinearity.

	The averaging transform $T^{W}$ is the convolution with the curve of a Brownian sheet when one time parameter is fixed, and can also be seen as the convolution against the measure induced by the local time process $(L_x^1(s,t);x\in\R,t\in\R_+)$ of the Brownian motion $(W_{s,t},t\in\R_+)$ (confer \cite{CG16}). More precisely, by an occupation time formula given in \cite[Eq. (2.6)]{Wa78},
	\begin{align*}
		T^W_I[b](s,x)=\int_Ib(t,x+W_{s,t})dt=\int_I\int_{\R}b(t,x+y)d_tL^y_1(s,t)dy,
	\end{align*}
	where $I$ denotes a finite interval of $\R$.
	
	We wish to derive some regularity estimates of the averaging operator given by \eqref{eqav1} (see Theorem \ref{theo:Pseudometric} and Corollary \ref{corol:Pseudometric}). These estimates play a key role in the study of  the path-by-path uniqueness of solutions to the following hyperbolic differential equation 
	\begin{align}\label{hspde}
		\left\{	\begin{array}{ll}
			\dfrac{\partial^2x(s,t)}{\partial s\partial t}=b(t,x(s,t))+\dfrac{\partial^2W_{s,t}}{\partial s\partial t}\\ \\
			x(0,t)=x_0=x(s,0),
		\end{array}
		\right.
	\end{align}
	It can easily be shown that the path-by-path uniqueness of the above equation \eqref{hspde} is equivalent to that of the following integral equation:
	\begin{align*}
		y(s,t)=\int_0^s\int_0^tb(t_1,y(s_1,t_1)+x_0+W_{s_1,t_1})\,\mathrm{d}t_1\mathrm{d}s_1=\int_0^sT^{W^{x_0}}_{[0,t]}[b](y)\,\mathrm{d}s_1.
	\end{align*}
	Such equation was studied in \cite{BDM22} by the same authors. More precisely, they show that the path-by-path uniqueness of \eqref{hspde} is valid when the drift is componentwise nondecreasing and satisfies spatial linear growth condition (see \cite[Theorem 3.2]{BDM22}). In addition, since path-by-path uniqueness implies pathwise uniqueness (confere \cite[ Section 1.8.5]{BFGM19}), it follows from a Yamada-Watanabe type result for Brownian-sheet (see for example \cite{NuYe89}) that equation \eqref{hspde} has a unique strong solution (see \cite[Corollary 3.3]{BDM22}). The results obtained in \cite{BDM22} were generalised in \cite{BMP22a} when the drift is the difference of two componentwise monotone functions and staisfying the linear growth condition (see \cite[Theorems 2.7 and 2.8]{BMP22a}). In addition, it was proved that the obtained solution is Malliavin differentiable; see \cite[Theorem 3.4.]{BMP22a}for bounded drift and \cite[Theorem 3.13]{BMP22a} for drift satisfying spatial linear growth condition). These results constitute a big improvement in this direction since to the best of our knowledge there no results in this direction under such conditions. The case of path-by-path uniquness of solution to \eqref{hspde} when the drift $b$ is merely measurable and bounded is at the moment still open. One difficulty relies on the choice of a convenient Euler-Maruyama type scheme that would leads to an additional useful regularising estimates which is key to obtaining a Gronwal type lemma (compare with \cite[Lemma 3.9]{BDM22}). When the noise is replaced by a fractional Brownian sheet, one of the main difficulty would be the definition of a two-parameter Young integral, which is central in proving the regularity of the averaging operator.

	There are many results in the analysis on weak and strong-type estimates for convolutions with deterministic curves. These operators are investigated for instance in \cite{TW03}, where the authors establish strong-type $(L^p,L^q)$ in the interior of a trapezoid and failure of a restricted type $(L^p,L^q)$ outside the same trapezoid, when $1\leq p<q\leq\infty$. Futher results on estimates for convolutions with deterministic curves may be found in \cite{Cr98,GS98,GSW99,Li73,Ob87,Ob99}.

	We now give rigourous definitions of filtered probability space and $\R^d$-valued Brownian sheet indexed by $\R_+^2$ with respect to a system of $\sigma$-algebras. We endow $\R_+^2$ with the following partial order 
	$$
	(s,t)\preceq(\ps,\tp)\text{ when }s\leq \ps\text{ and }t\leq\tp.
	$$
	We also write
	$$
	(s,t)\prec(\ps,\tp)\text{ when }s<\ps\text{ and }t<\tp.
	$$
	The next definitions may be found in \cite[Section 1]{NuYe89}.
	\begin{defi}
		Let $(\Omega,\mathcal{F},\Pb)$ be a probability space and 
		$(\mathcal{F}_{s,t})_{(s,t)\in\R^2_+}$ be a system of sub-$\sigma$-algebras of $\mathcal{F}$. We say that $(\Omega,\mathcal{F},(\mathcal{F}_{s,t},(s,t)\in \R_+^2),\Pb)$ is a filtered probability space if
		\begin{enumerate}
			\item $(\Omega,\mathcal{F},\Pb)$ is a complete probability space;
			\item $(\mathcal{F}_{s,t},(s,t)\in \R_+^2)$ is a non-decreasing system in the sense that $\mathcal{F}_{s,t}\subset\mathcal{F}_{(\ps,\tp)}$ when $(s,t)\preceq(\ps,\tp)$;
			\item $\mathcal{F}_{(0,0)}$ contains all the null sets in $(\Omega,\mathcal{F},\Pb)$,
			\item $(\mathcal{F}_{s,t},(s,t)\in \R_+^2)$ is a right-continuous system in the sense that
			$$
			\mathcal{F}_{s,t}=\bigcap\limits_{(s,t) \prec (\ps,\tp)}\mathcal{F}_{\ps,\tp}.
			$$
		\end{enumerate}
		We call filtration any non-decreasing system of sub-$\sigma$-algebras of $\mathcal{F}$. We call natural filtration of a process $X=(X_{s,t},(s,t)\in\R_+^2)$ the system $(\mathcal{F}^X_{s,t},(s,t)\in\R_+^2)$ of sub-$\sigma$-algebras of $\mathcal{F}$ given by
		\begin{align*}
			\mathcal{F}_{s,t}^X=\sigma(X_{u,v},0\leq u\leq s,0\leq v\leq t).
		\end{align*}
	\end{defi}
	\begin{defi}
		We call a one-dimensional $(\mathcal{F}_{s,t})$-Brownian sheet on a filtered probability space $(\Omega,\mathcal{F},(\mathcal{F}_{s,t},(s,t)\in \R_+^2),\Pb)$ any real valued two-parameter stochastic process $W=(W_{s,t},(s,t)\in \R_+^2)$ satisfying the following conditions:
		\begin{enumerate}
			\item $W$ is $(\mathcal{F}_{s,t})$-adapted, i.e. $W_{s,t}$ is $\mathcal{F}_{s,t}$-measurable, for every $(s,t)\in \R_+^2$.
			\item Almost every sample function $(s,t)\longmapsto W_{s,t}(\omega)$ of $W$ is continuous on $\R^2_+$.
			\item Almost every sample function of $W$ vanishes on $\partial \R_+^2$.
			\item For every finite rectangle of the type $\Pi=]s,\ps]\times]t,\tp]\subset \R_+^2$, the random variable $$W(\Pi):=W_{\ps,\tp}-W_{s,\tp}-W_{\ps,t}+W_{s,t}$$
			is centered, Gaussian with variance $(\ps-s)(\tp-t)$ and independent of $\mathcal{F}_{s,\infty}\vee\mathcal{F}_{\infty,t}$, where
			\begin{align*}
				\mathcal{F}_{s,\infty}=\sigma\big(\bigcup_{v\in\R_+}\mathcal{F}_{s,v}\big)\,\text{ and }\,\mathcal{F}_{\infty,t}=\sigma\big(\bigcup_{u\in\R_+}\mathcal{F}_{u,t}\big).
			\end{align*}	
		\end{enumerate} 
		We call a $d$-dimensional Brownian sheet any $\R^d$-valued two-parameter process $W=(W^{(1)},\ldots,W^{(d)})$ such that $W^{(i)}$, $i=1,\ldots,d$, are independent one-dimensional Brownian sheets.
	\end{defi}

	One useful caracterisation of the Brownian sheet $W$ is that it is the only one centered and continuous Gaussian process with covariance function $[(s,t),(\ps,\tp)]\longmapsto\E[W_{s,t}W_{\ps,\tp}]=(s\wedge\ps)(t\wedge\tp)$.
	As a consequence, if $W$ is a Brownian sheet on a given filtered probability space, then so are the processes $(W_{a+s,t}-W_{a,t},(s,t)\in\R^2)$ and $\Big(\ve^{-1/2}(W_{s,a+\ve t}-W_{s,a}),(s,t)\in\R_+^2\Big)$ with respect to their natural filtrations. We mention that a rather complete analysis on multi-parameter processes and their applications in Analysis is provided by Khoshnevisan \cite{Kh02}.		
	
	The remainder of paper consists of two sections. In the first one, we propose a stochastic local time-space calculus for the Brownian sheet. We then extend several formulas obtained by Eisenbaum \cite{Ei00} to other classes of Brownian sheet processes. We also generalise an It\^o formula obtained by Cairoli and Walsh \cite{CW75}. The last section is devoted to an application of the local time-space calculus presented in the previous section. More specifically, we give several Davie types inequalities for the Brownian sheet are given and exploited to obtain regularity estimates of some averaging operators .

	\section{ Stochastic Integration over the space with respect to Local Time}
	
	\subsection{Integration with respect to local time of deterministic functions}\label{sectloct1}
	
	We aim at writting the integral of a $\R$-valued Borel measurable function on $[0,1]^2\times\R$ with respect to the local time in the plane of the Brownian sheet.
	Let $(W_{s,t};s\geq0,t\geq0)$ be a Brownian sheet given on an equipped probability space. It is known that for $s$ fixed, $(W_{s,t},t\geq0)$ is a Brownian motion, and its local time process $(L_1^x(s,t);x\in\R,t\geq0)$ is given by Tanaka's formula (see for example \cite[Section 1]{Wa78}): 
	\begin{align}\label{eq:TanakaSheet1}
		\int_0^t\mathbf{1}_{\{W_{s,u}\leq x\}}\mathrm{d}_uW_{s,u}=\frac{s}{2}L^x_1(s,t)-(W_{s,t}-x)^{-}+x^{+}.
	\end{align}
	Moreover for any fixed $s\in[0,1]$, let $\hat{W}_{s,\cdot}$ be the time reversal process on $[0,1]$ of the Brownian motion $\hat{W}_{s,\cdot}$, i.e. $\hat{W}_{s,t}=W_{s,1-t}$, and let $(\hat{L}^x_1(s,t);x\in\R,0\leq t\leq1)$ be the local time process  of $(\hat{W}_{s,t},0\leq t\leq 1)$. Then the following holds
	\begin{align*}
		\hat{L}^x_1(s,t)=L^x_1(s,1)-L^x_1(s,1-t).
	\end{align*}
	It follows from Tanaka's formula that
	\begin{align}\label{eq:TanakaSheet2}
		\int_{1-t}^1\mathbf{1}_{\{\hat{W}_{s,u}\leq x\}}\mathrm{d}_u\hat{W}_{s,u}=\frac{s}{2}L^x_1(s,t)+(\hat{W}_{s,1-t}-x)^{-}-(\hat{W}_{s,1}-x)^{-}.
	\end{align}
	Summing \eqref{eq:TanakaSheet1} and \eqref{eq:TanakaSheet2} yields
	\begin{align}\label{eq:StoInt1DLocTime01}
		sL^x_1(s,t)&=\int_0^t\mathbf{1}_{\{W_{s,u}\leq x\}}\mathrm{d}_uW_{s,u}+\int_{1-t}^1\mathbf{1}_{\{\hat{W}_{s,u}\leq x\}}\mathrm{d}_u\hat{W}_{s,u}.
	\end{align}
	Next we introduce a notion of backward stochastic integral with respect to the Brownian motion $(W_{s,t},t\geq0)$. Let $f:\,[0,1]^2\times\R\to\R$ be a measurable function such that $x\longmapsto f(s,t,x)$ is locally square integrable for every $s,t\in[0,1]$ and $(s,t)\longmapsto f(s,t,x)$ is weakly continuous for every $x\in\R$, that is $(s,t)\longmapsto f(s,t,x)$ is continuous for every $x\in\R$ as a map from $[0,1]^2$ to $L^2_{loc}(\R)$. For $s,t\in[0,1]$ fixed, it was shown in \cite[Proposition 3.2]{FPS95} that the limit below exists in $L^1(\Omega,\Pb)$: 
	\begin{align*}
		\lim\limits_{\sup_k|t_{k+1}-t_k|\to0}\sum\limits_{0<t_1<\cdots<t_{\ell}<t}f(s,t_{k+1},W_{s,t_{k+1}})(W_{s,t_{k+1}}-W_{s,t_k}),
	\end{align*}
	where $(t_k)_{1\leq k\leq \ell}$ is a subdivision of $[0,t]$.
	This limit is denoted by $\int_0^tf(s,u,W_{s,u})\mathrm{d}_u^{\ast}W_{s,u}$ and called a backward stochastic integral with respect to the Brownian motion $(W_{s,t},0\leq t\leq1)$. In addition, using\cite[Eq. (3.19), page 153]{FPS95}) and \cite[Chapter II, Theorem 11]{Pro90}, we also have
	\begin{align}\label{eq:FPSRevBSheet01}
		\int_0^tf(s,u,W_{s,u})\mathrm{d}_u^{\ast}W_{s,u}=-\int_{1-t}^1f(s,1-u,\hat{W}_{s,u})\mathrm{d}_u\hat{W}_{s,u}.
	\end{align}
	It follows from (\ref{eq:StoInt1DLocTime01}) and (\ref{eq:FPSRevBSheet01}) that
	\begin{align}\label{eq:FPSRevBSheet02}
		sL^x_1(s,t)=\int_0^t\mathbf{1}_{\{W_{s,u}\leq x\}}\mathrm{d}_uW_{s,u}-\int_{0}^t\mathbf{1}_{\{W_{s,u}\leq x\}}\mathrm{d}^{\ast}_uW_{s,u}.
	\end{align}
	We call local time for the Brownian sheet $W$ the process $L:=(L_{s,t}^x;x\in\R,s\geq0,t\geq0)$ defined in \cite[Section 6, Page 157]{CW75} (see also \cite[Section 2]{Wa78}) by :
	\begin{align}\label{DefiLocTimeW}
		L^x_{s,t}=\lim\limits_{\ve\to0}\frac{1}{2\ve}\int_0^s\int_0^t\mathbf{1}_{[x-\ve,x+\ve]}(W_{s_1,t_1})\,\mathrm{d}t_1\mathrm{d}s_1.
	\end{align}
	We deduce from \eqref{DefiLocTimeW} (see e.g. \cite[Eq. (2.3)]{Wa78}) that
	\begin{align}\label{DefLocTimeSheet}
		\int_0^sL_1^x(s_1,t)\mathrm{d}s_1=	L^x_{s,t}=\int_0^tL_2^x(s,t_1)\mathrm{d}t_1,\quad\forall\,x\in\R,\,\forall\,(s,t)\in\R_+^2,
	\end{align}
	where $(L^x_2(s,t);x\in\R,s\geq0)$ is the local time of the Brownian motion $(W_{s,t},s\geq0)$.
	
	%

	Substituting \eqref{eq:FPSRevBSheet02} into \eqref{DefLocTimeSheet} yields
	\begin{align}\label{eq:StoInt2DLocTime01}
		L^x_{s,t}=\int_0^s\int_0^t\mathbf{1}_{\{W_{\xi,u}\leq x\}}\frac{\mathrm{d}_uW_{\xi,u}}{\xi}\mathrm{d}\xi-\int_0^s\int_{0}^t\mathbf{1}_{\{W_{\xi,u}\leq x\}}\frac{\mathrm{d}^{\ast}_uW_{\xi,u}}{\xi}\mathrm{d}\xi.
	\end{align}
	
	Denote by $(\mathcal{H}, \Vert\cdot\Vert)$ the space of Borel measurable functions $f:[0,T]\times \mathbb{R}^2\rightarrow \mathbb{R} $ with the norm $\Vert\cdot\Vert$ defined by
	\begin{align*}
		\Vert f\Vert=&2\Big(\int_0^1\int_0^1\int_{\R}f^2(s,t,x)\exp\Big(-\frac{x^2}{2st}\Big)\frac{\mathrm{d}x\mathrm{d}s\mathrm{d}t}{\sqrt{2\pi st}}\Big)^{1/2}\\
		&+\int_0^1\int_0^1\int_{\R}|xf(s,t,x)|\exp\Big(-\frac{x^2}{2st}\Big)\frac{\mathrm{d}x\mathrm{d}s\mathrm{d}t}{st\sqrt{2\pi st}}\\
		=&2\Big(\int_0^1\int_0^1\E\Big[f^2(s,t,W_{s,t})\Big]\mathrm{d}s\mathrm{d}t\Big)^{1/2}+\int_0^1\int_0^1\E\Big[\Big|f(s,t,W_{s,t})\frac{W_{s,t}}{st}\Big|\Big] \mathrm{d}s\mathrm{d}t.
	\end{align*}
	Then, endowed with the above norm, $\mathcal{H}$ is a Banach space. We show(see Proposition \ref{prop:StoInt3DDcase})  that one can define a stochastic integration over the time and space with respect to local time for the elements of $\mathcal{H}$. 
	
	We say that $f_{\Delta}:\,[0,1]^2\times\R\to\R$ is an elementary function if there exist two sequences of real numbers $(x_i)_{0\leq i\leq n}$, $(f_{ijk};0\leq i\leq n,0\leq j\leq m, 0\leq k\leq\ell)$ and two subdivisions of $[0,1]$ $(s_j)_{0\leq j\leq m}$, $(t_k)_{0\leq k\leq \ell}$ such that
	\begin{align}\label{eq:ElemFunction}
		f_{\Delta}(s,t,x)=\sum\limits_{(x_i,s_j,t_k)\in\Delta}f_{ijk}\mathbf{1}_{(x_i,x_{i+1}]}(x)\mathbf{1}_{(s_j,s_{j+1}]}(s)\mathbf{1}_{(t_k,t_{k+1}]}(t),
	\end{align}
	where $\Delta=\{(x_i,s_j,t_k);0\leq i\leq n,0\leq j\leq m, 0\leq k\leq\ell\}$.
	For such a given function $f_{\Delta}$, we define its integral with respect to $L$ as 
	\begin{align*}
		\int_0^1\int_0^1\int_{\R}f_{\Delta}(s,t,x)\mathrm{d}L^x_{s,t}=&\sum\limits_{(x_i,s_j,t_k)\in\Delta}f_{ijk}\Big(L^{x_{i+1}}_{s_{j+1},t_{k+1}}-L^{x_{i+1}}_{s_{j},t_{k+1}}-L^{x_{i}}_{s_{j+1},t_{k+1}}+L^{x_{i}}_{s_{j},t_{k+1}}\\
		&	-L^{x_{i+1}}_{s_{j+1},t_{k}}+L^{x_{i+1}}_{s_{j},t_{k}}+L^{x_{i}}_{s_{j+1},t_{k}}-L^{x_{i}}_{s_{j},t_{k}}\Big).
	\end{align*} 
	Let $f$ be an element of $\mathcal{H}$ and let $(f_n)_{n\in\N}$ be a sequence of elementary functions converging to $f$ in $\mathcal{H}$. We prove in the following result that $\Big(\int_0^1\int_0^1\int_{\R}f_n(s,t,x)\mathrm{d}L^x_{s,t}\Big)_{n\in\N}$ converge in $L^1(\Omega,\Pb)$ and that the limit does not depend of the choice of the sequence $(f_n)_{n\in\N}$. This limit is called integral of $f$ with respect to $L$. 
	\begin{prop}\label{prop:StoInt3DDcase}
		For any $f\in\mathcal{H}$, the integral
		$\int_0^s\int_0^t\int_{\R}f(\xi,u,x)\mathrm{d}L^x_{\xi,u}$ exists and is given for any $(s,t)\in(0,1]^2$ by
		\begin{align}\label{eq:StoInt3DLocTime01}
			\int_0^s\int_0^t\int_{\R}f(\xi,u,x)\mathrm{d}L^x_{\xi,u}=\int_0^s\int_0^tf(\xi,u,W_{\xi,u})\frac{\mathrm{d}_uW_{\xi,u}}{\xi}\mathrm{d}\xi-\int_0^s\int_0^tf(\xi,u,W_{\xi,u})\frac{\mathrm{d}^{\ast}_uW_{\xi,u}}{\xi}\mathrm{d}\xi.
		\end{align}
		Moreover, we have
		\begin{align}\label{Ineq:StoInt3D02}
			\E\Big[\Big|\int_0^s\int_0^t\int_{\R}f(\xi,u,x)\mathrm{d}L^x_{\xi,u}\Big|\Big]\leq\Vert f\Vert.
		\end{align}
	\end{prop}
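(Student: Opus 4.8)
The plan is to establish the identity \eqref{eq:StoInt3DLocTime01} and the bound \eqref{Ineq:StoInt3D02} first for elementary functions \eqref{eq:ElemFunction} and then to extend both to all of $\mathcal{H}$ by completion. The identity for an elementary $f_\Delta$ is pure bookkeeping: by definition $\int f_\Delta\,\mathrm dL$ is a finite linear combination of the eight-fold rectangular increments of $L^x_{s,t}$, so one substitutes the representation \eqref{eq:StoInt2DLocTime01} of each $L^{x_i}_{\cdot,\cdot}$ and uses linearity of the forward and backward integrals. The telescoping over the $x$-grid collapses the coefficients $f_{ijk}\mathbf 1_{(x_i,x_{i+1}]}$ into the evaluation $f_\Delta(\xi,u,W_{\xi,u})$ inside the integrals, while the $(s,t)$-grid reconstitutes the limits $\int_0^s\int_0^t$, giving \eqref{eq:StoInt3DLocTime01} for $f_\Delta$.

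The crux is the estimate \eqref{Ineq:StoInt3D02} for $f_\Delta$. The key observation is that, for each fixed $\xi\in(0,s]$, the combination in \eqref{eq:StoInt3DLocTime01}, namely $\tfrac1\xi\bigl(\int_0^t f\,\mathrm d_uW_{\xi,u}-\int_0^t f\,\mathrm d^{\ast}_uW_{\xi,u}\bigr)$, is exactly Eisenbaum's one-parameter stochastic local time--space integral of $f(\xi,\cdot,\cdot)$ against the local time of the Brownian motion $(W_{\xi,u},u\in[0,1])$; indeed \eqref{eq:FPSRevBSheet02} is precisely this identity for $f=\mathbf 1_{\{\cdot\le x\}}$. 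Hence $\int_0^s\int_0^t\int_\R f\,\mathrm dL^x_{\xi,u}$ equals the $\xi$-integral over $(0,s]$ of these one-parameter integrals. I would then apply the one-parameter estimate of \cite{Ei00} (after rescaling $W_{\xi,\cdot}=\sqrt\xi\,\widetilde B$ to a standard Brownian motion, which produces the weights $1/\xi$ and $1/(\xi u)$), giving for each $\xi$
\[
\E\Bigl|\int_0^t\!\!\int_\R f(\xi,\cdot,\cdot)\,\mathrm dL^x_1(\xi,\cdot)\Bigr|\le 2\Bigl(\int_0^t\E[f^2(\xi,u,W_{\xi,u})]\,\mathrm du\Bigr)^{1/2}+\int_0^t\E\Bigl[\bigl|f(\xi,u,W_{\xi,u})\bigr|\tfrac{|W_{\xi,u}|}{\xi u}\Bigr]\mathrm du.
\]
Pulling $\E|\cdot|$ inside $\int_0^s\mathrm d\xi$ by the triangle inequality and integrating this bound over $\xi$ reproduces $\|f_\Delta\|$: the drift term Fubini's directly into the second summand of the norm, while the martingale term, by the Cauchy--Schwarz inequality in $\xi$ together with $s\le 1$, is dominated by $2\bigl(\int_0^s\int_0^t\E[f^2]\,\mathrm du\,\mathrm d\xi\bigr)^{1/2}\le 2\bigl(\int_0^1\int_0^1\E[f^2]\bigr)^{1/2}$, the first summand.

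For the extension, \eqref{Ineq:StoInt3D02} shows that $f_\Delta\mapsto\int f_\Delta\,\mathrm dL$ is a bounded linear map from the elementary functions, which are dense in $(\mathcal{H},\|\cdot\|)$ by construction, into $L^1(\Omega,\Pb)$. Thus for $f\in\mathcal{H}$ and elementary $f_n\to f$ the integrals $\int f_n\,\mathrm dL$ are Cauchy in $L^1$; the limit defines $\int f\,\mathrm dL$, is independent of the approximating sequence (interlace two such sequences into one), and inherits \eqref{Ineq:StoInt3D02}. Since the two functionals on the right of \eqref{eq:StoInt3DLocTime01} are likewise $\|\cdot\|$-continuous into $L^1$ (by the same $L^2$ and $L^1$ bounds on the forward integral and the reversal drift), the representation \eqref{eq:StoInt3DLocTime01} passes to the limit for every $f\in\mathcal{H}$.

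The main obstacle is the estimate for elementary functions, and within it the singular weight $1/\xi$ at the origin. Bounding the forward and backward integrals separately is hopeless: each already has $L^2$ size $\sqrt2\,(\int\int\E[f^2])^{1/2}$ when $f\equiv 1$, yet their difference then vanishes identically, so the required bound survives only because the forward$-$backward combination is recognised as a single one-parameter local time integral, in which the $1/\xi$ singularity becomes integrable --- the drift term gains the factor $\E|W_{\xi,u}|\sim\sqrt{\xi u}$, and the martingale term is tamed by Cauchy--Schwarz in $\xi$ rather than by a lossy pointwise bound. A secondary technical point is the clean scaling of Eisenbaum's one-parameter estimate to the variance-rate-$\xi$ Brownian motion $W_{\xi,\cdot}$ with constants uniform in $\xi$.
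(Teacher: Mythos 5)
Your treatment of the identity \eqref{eq:StoInt3DLocTime01} for elementary functions and your density/completion argument are essentially the paper's and are fine. The genuine gap is in the crux, the bound \eqref{Ineq:StoInt3D02} for elementary functions, and it sits exactly at what you dismiss as a ``secondary technical point'': Eisenbaum's one-parameter estimate does \emph{not} transfer to the Brownian motion $(W_{\xi,u})_{u\in[0,1]}$ (variance rate $\xi$) with constants uniform in $\xi$. Write $W_{\xi,u}=\sqrt{\xi}\,\widetilde{B}_u$ with $\widetilde{B}$ standard, $g(u,y)=f(\xi,u,\sqrt{\xi}\,y)$, and let $\widetilde{L}$ be the local time of $\widetilde{B}$; then $L^{x}_1(\xi,u)=\xi^{-1/2}\widetilde{L}^{x/\sqrt{\xi}}_u$, so the local time--space integral itself picks up a factor $\xi^{-1/2}$, and Eisenbaum's inequality applied to $g$ gives
\begin{align*}
\E\Big[\Big|\int_0^t\int_{\R}f(\xi,u,x)\,\mathrm{d}_{x,u}L^x_1(\xi,u)\Big|\Big]
\leq\frac{2}{\sqrt{\xi}}\Big(\int_0^t\E\big[f^2(\xi,u,W_{\xi,u})\big]\,\mathrm{d}u\Big)^{1/2}
+\int_0^t\E\Big[\Big|f(\xi,u,W_{\xi,u})\,\frac{W_{\xi,u}}{\xi u}\Big|\Big]\,\mathrm{d}u .
\end{align*}
The drift term carries only the weight $1/(\xi u)$, as you say, but the martingale term carries $2/\sqrt{\xi}$, not $2$. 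The factor is not removable: the forward half alone has $L^2(\Omega)$-norm exactly $\xi^{-1/2}\big(\int_0^t\E[f^2]\,\mathrm{d}u\big)^{1/2}$ by It\^o isometry ($\mathrm{d}\langle W_{\xi,\cdot}\rangle_u=\xi\,\mathrm{d}u$); and your displayed inequality, transported back to $\widetilde{B}$ by the same change of variables, would assert Eisenbaum's bound with its martingale term multiplied by $\sqrt{\xi}$, hence, letting $\xi\downarrow0$, with no martingale term at all --- which is false (take $g=\mathbf{1}_{(-\eps,\eps]}$, independent of time: then $\int_0^1\int_{\R}g\,\mathrm{d}\widetilde{L}=\widetilde{L}^{\eps}_1-\widetilde{L}^{-\eps}_1$ has $L^1$-size of order $\sqrt{\eps}$ by the H\"older-$\frac12$ spatial fluctuations of local time, while the drift functional is $O(\eps)$). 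With the correct factor your last step collapses: Cauchy--Schwarz in $\xi$ would need $\int_0^s\xi^{-1}\mathrm{d}\xi<\infty$, and the integrated bound $\int_0^s\xi^{-1/2}\big(\int_0^t\E[f^2]\,\mathrm{d}u\big)^{1/2}\mathrm{d}\xi$ is not dominated by any constant multiple of $\Vert f\Vert$ (let $\E[f^2(\xi,\cdot,W_{\xi,\cdot})]$ concentrate near $\xi=0$ like $\xi^{-1}\log^{-2}(e/\xi)$). So the slicewise strategy cannot deliver \eqref{Ineq:StoInt3D02}, even with a worse constant.

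Ironically, your opening observation is the correct diagnosis --- the $1/\xi$ singularity is harmless only for the \emph{aggregated} double integrals, thanks to cancellation across different $\xi$ --- but applying Eisenbaum's $L^1$-bound at each fixed $\xi$ and then integrating $\mathrm{d}\xi$ is precisely the lossy pointwise move you warn against. The paper never estimates a single $\xi$-slice: it rewrites the backward double integral in \eqref{eq:StoInt3DLocTime01} via the Dalang--Walsh time reversal \eqref{eq:ReprDWTimeRev} of the whole sheet, $\hat{W}_{s,t}=W_{s,1}+B_{s,t}-\int_0^t\hat{W}_{s,u}(1-u)^{-1}\mathrm{d}u$ with $B$ a Brownian sheet independent of $(W_{s,1})_{s\geq0}$, so that it becomes a forward double integral against $B$ plus a drift whose expectation, after the change of variable $u\mapsto1-u$, is exactly the second summand of $\Vert\cdot\Vert$; the two martingale-type terms are then estimated with the expectation taken of the whole $\xi$-integrated object, whose second moment involves the kernel $\frac{\xi\wedge\xi'}{\xi\xi'}=\frac{1}{\xi\vee\xi'}$, a bounded operator on $L^2$ (Schur test), yielding a bound $C\big(\int_0^1\int_0^1\E[f^2]\big)^{1/2}$ with a universal $C$ and no $\xi^{-1/2}$ blow-up. (Whether one may take $C=1$ there, as the paper's intermediate display asserts, is itself doubtful: for $f\equiv\mathbf{1}$ the forward term is $\int_0^1\xi^{-1}W_{\xi,1}\,\mathrm{d}\xi$, of variance $2$ and hence of $L^1$-norm $2/\sqrt{\pi}>1$; but any universal constant suffices for the density argument, whereas the slicewise route provides none.) To repair your proof you would have to bound $\E\big[\big(\int_0^s\frac{1}{\xi}\int_0^tf\,\mathrm{d}_uW_{\xi,u}\,\mathrm{d}\xi\big)^2\big]$ and its time-reversed counterpart directly, i.e.\ redo the paper's aggregated computation.
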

	\begin{proof}
		We first show the Proposition for elementary function. Let $f_{\Delta}$ be a simple function as defined in \eqref{eq:ElemFunction}. We deduce from \eqref{eq:StoInt2DLocTime01} that
		\begin{align*}
			&L^{x_{i+1}}_{s_{j+1},t_{k+1}}-L^{x_{i+1}}_{s_{j},t_{k+1}}-L^{x_{i}}_{s_{j+1},t_{k+1}}+L^{x_{i}}_{s_{j},t_{k+1}}-L^{x_{i+1}}_{s_{j+1},t_{k}}+L^{x_{i+1}}_{s_{j},t_{k}}+L^{x_{i}}_{s_{j+1},t_{k}}-L^{x_{i}}_{s_{j},t_{k}}\\
			=&\int_{s_j}^{s_{j+1}}\int_{t_k}^{t_{k+1}}\mathbf{1}_{\{x_i< W_{\xi,u}\leq x_{j+1}\}}\frac{\mathrm{d}_uW_{\xi,u}}{\xi}\mathrm{d}\xi-\int_{s_j}^{s_{j+1}}\int_{t_k}^{t_{k+1}}\mathbf{1}_{\{x_i< W_{\xi,u}\leq x_{j+1}\}}\frac{\mathrm{d}^{\ast}_uW_{\xi,u}}{\xi}\mathrm{d}\xi\\
			=&\int_0^1\int_0^1\mathbf{1}_{(s_j,s_{j+1}]}(\xi)\mathbf{1}_{(t_k,t_{k+1}]}(u)\mathbf{1}_{]x_i,x_{i+1}]}(W_{\xi,u})\frac{\mathrm{d}_uW_{\xi,u}}{\xi}\mathrm{d}\xi
			\\&-\int_0^1\int_0^1\mathbf{1}_{(s_j,s_{j+1}]}(\xi)\mathbf{1}_{(t_k,t_{k+1}]}(u)\mathbf{1}_{]x_i,x_{i+1}]}(W_{\xi,u})\frac{\mathrm{d}^{\ast}_uW_{\xi,u}}{\xi}\mathrm{d}\xi.
		\end{align*}
		As a consequence, we get
		\begin{align}
			&\int_0^s\int_0^t\int_{\R}f_{\Delta}(\xi,u,x)\mathrm{d}L^x_{\xi,u} \nonumber\\
			=&\int_0^s\int_0^tf_{\Delta}(\xi,u,W_{\xi,u})\frac{\mathrm{d}_uW_{\xi,u}}{\xi}\mathrm{d}\xi-\int_0^s\int_0^tf_{\Delta}(\xi,u,W_{\xi,u})\frac{\mathrm{d}^{\ast}_uW_{\xi,u}}{\xi}\mathrm{d}\xi.\label{eq:IntLocTimeDelta}
		\end{align}
		Let us now show \eqref{Ineq:StoInt3D02} holds for such a simple function. Observe that 
		\begin{align}\label{eqrevB1}
			\int_0^s\int_0^tf_{\Delta}(\xi,u,W_{\xi,u})\frac{\mathrm{d}^{\ast}_uW_{\xi,u}}{\xi}\mathrm{d}\xi=-\int_0^s\int_{1-t}^1f_{\Delta}(\xi,1-u,\hat{W}_{\xi,u})\frac{\mathrm{d}_u\hat{W}_{\xi,u}}{\xi}\mathrm{d}\xi.
		\end{align}
		We also know from \cite[Theorem 6.1]{DaW02} that $\hat{W}$ admits the following representation
		\begin{align}\label{eq:ReprDWTimeRev}
			\hat{W}_{s,t}=W_{s,1}+B_{s,t}-\int_0^t\frac{\hat{W}_{s,u}}{1-u}\mathrm{d}u,
		\end{align}
		where $B$ is a standard Brownian sheet independent of $(W_{s,1},s\geq0)$. Using \eqref{eq:FPSRevBSheet01} and \eqref{eqrevB1}, \eqref{eq:IntLocTimeDelta} can be rewritten as
		\begin{align}
			&\int_0^s\int_0^tf_{\Delta}(\xi,u,W_{\xi,u})\frac{\mathrm{d}^{\ast}_uW_{\xi,u}}{\xi}\mathrm{d}\xi\nonumber\\
			=&-\int_0^s\int_{1-t}^1f_{\Delta}(\xi,1-u,\hat{W}_{\xi,u})\frac{\mathrm{d}_uB_{\xi,u}}{\xi}\mathrm{d}\xi+\int_0^s\int_{1-t}^1f_{\Delta}(\xi,1-u,\hat{W}_{\xi,u})\frac{\hat{W}_{\xi,u}}{\xi(1-u)}\mathrm{d}u\,\mathrm{d}\xi.\label{eq:IntLocTimeDeltaR}
		\end{align}
		Substitute \eqref{eq:IntLocTimeDeltaR} into \eqref{eq:IntLocTimeDelta}, take the absolute value on both sides, use the triangle, take the expectation and use the Cauchy-Schwarz inequalities to obtain
		\begin{align*}
			&\E\Big[\Big|\int_0^s\int_0^t\int_{\R}f_{\Delta}(\xi,u,x)\mathrm{d}L^x_{\xi,u}\Big|\Big]\\
			\leq&\E\Big[\Big|\int_0^s\int_0^tf_{\Delta}(\xi,u,W_{\xi,u})\frac{\mathrm{d}_uW_{\xi,u}}{\xi}\mathrm{d}\xi\Big|\Big]+\E\Big[\Big|\int_0^s\int_0^tf_{\Delta}(\xi,u,W_{\xi,u})\frac{\mathrm{d}^{\ast}_uW_{\xi,u}}{\xi}\mathrm{d}\xi\Big|\Big]\\
			\leq	&\Big(\int_0^1\int_0^1\E\Big[f^2_{\Delta}(\xi,u,W_{\xi,u})\Big]\mathrm{d}u\,\mathrm{d}\xi\Big)^{1/2}+\Big(\int_0^1\int_{0}^1\E\Big[f^2_{\Delta}(\xi,1-u,\hat{W}_{\xi,u})\Big]\mathrm{d}u\,\mathrm{d}\xi\Big)^{1/2}\\
			&+\int_0^1\int_{0}^1\E\Big[\Big|f_{\Delta}(\xi,1-u,\hat{W}_{\xi,u})\frac{\hat{W}_{\xi,u}}{\xi(1-u)}\Big|\Big]\mathrm{d}u\,\mathrm{d}\xi,
		\end{align*}
		which means that
		\begin{align}\label{Ineq:StoInt3D01}
			\E\Big[\Big|\int_0^s\int_0^t\int_{\R}f_{\Delta}(\xi,u,x)\mathrm{d}L^x_{\xi,u}\Big|\Big]\leq\Vert f_{\Delta}\Vert.
		\end{align}
		Thus \eqref{Ineq:StoInt3D02} holds for simple functions. Using the above inequality, the extension of the definition of the integral to the elements of $\mathcal{H}$  follows by the density of elementary functions in $\mathcal{H}$. Since the integrals 
		$$
		\int_0^s\int_0^tf(\xi,u,W_{\xi,u})\frac{\mathrm{d}_uW_{\xi,u}}{\xi}\mathrm{d}\xi\,\text{ and }\,\int_0^s\int_0^tf(\xi,u,W_{\xi,u})\frac{\mathrm{d}^{\ast}_uW_{\xi,u}}{\xi}\mathrm{d}\xi
		$$ 
		exist and are well defined, it follows that any function $f$ in $\mathcal{H}$ satisfies:
		\begin{align*}
			\int_0^s\int_0^t\int_{\R}f(\xi,u,x)\mathrm{d}L^x_{\xi,u}=\int_0^s\int_0^tf(\xi,u,W_{\xi,u})\frac{\mathrm{d}_uW_{\xi,u}}{\xi}\mathrm{d}\xi-\int_0^s\int_0^tf(\xi,u,W_{\xi,u})\frac{\mathrm{d}^{\ast}_uW_{\xi,u}}{\xi}\mathrm{d}\xi.
		\end{align*}
		This gives \eqref{eq:StoInt3DLocTime01}.
		To obtain \eqref{Ineq:StoInt3D02}, we apply \eqref{Ineq:StoInt3D01} and a limit argument.
	\end{proof}
	\begin{remark}\label{rem:StoIntLTContF}
		\item[1.] For any $0\leq s_1<s_2$, $0\leq t_1<t_2$ and $x_1<x_2$, we set $A=[s_1,s_2]\times[t_1,t_2]\times[x_1,x_2]$. Then one has
		\begin{align*}
			\int_A\mathrm{d}L^x_{s,t}=&	L^{x_{2}}_{s_{2},t_{2}}-L^{x_{2}}_{s_{1},t_{2}}-L^{x_{1}}_{s_{2},t_{2}}+L^{x_{1}}_{s_{1},t_{2}}
			-L^{x_{2}}_{s_{2},t_{1}}+L^{x_{2}}_{s_{1},t_{1}}+L^{x_{1}}_{s_{2},t_{1}}-L^{x_{1}}_{s_{1},t_{1}}\\
			=&\int_{s_1}^{s_2}\left\{L_1^{x_2}(\xi,t_2)-L_1^{x_1}(\xi,t_2)-L_1^{x_2}(\xi,t_1)+L_1^{x_1}(\xi,t_1)\right\}\mathrm{d}\xi=\int_Ad_{x,t}L^{x}_1(\xi,t)\,\mathrm{d}\xi.
		\end{align*}
		By a monotone class argument, 
		\begin{align}\label{MonotoneClass}
			\int_0^1\int_0^1\int_{\R}g(s,t,x)\mathrm{d}L^{x}_{s,t}=\int_0^1\int_0^1\int_{\R}g(s,t,x)\mathrm{d}_{x,t}L^{x}_1(\xi,t)\,\mathrm{d}\xi
		\end{align}
		for every bounded Borel measurable function $g:\,[0,1]^2\times\R\to\R$. In particular, for any bounded Borel measurable function $\ell:\,[0,1]^2\to\R$ and any $a\in\R$,
		\begin{align}\label{MonotoneClass2}
			\int_0^1\int_0^1\ell(s,t)\mathrm{d}_{s,t}L^{a}_{s,t}=\int_0^1\int_0^1\ell(s,t)\mathrm{d}_{t}L^{a}_1(\xi,t)\,d\xi.
		\end{align}
		\item[2.]	Let $f:\,[0,1]^2\times\R\to\R$ be a continuous function in $\mathcal{H}$. For $a<b$, let $(x_i)_{0\leq i\leq n}$ be a subdivision of $[a,b]$, $(s_j)_{0\leq j\leq m}$ be a subdivision of $[0,s]$ and $(t_k)_{0\leq k\leq\ell}$ be a subdivision of $[0,t]$. Denote by $\Delta$ the grid $\{(s_j,t_k,x_i),0\leq i\leq n,0\leq j\leq m, 0\leq k\leq\ell\}$. Then, as $|\Delta|$ tends to $0$, the expression 
		\begin{align*} 
			\sum\limits_{
				\begin{subarray}{}
					0\leq i\leq n,0\leq j\leq m\\
					\quad 0\leq k\leq\ell
				\end{subarray}
			}f(s_j,t_k,x_i)\left(L^{x_{i+1}}_{s_{j+1},t_{k+1}}-L^{x_{i+1}}_{s_{j},t_{k+1}}-L^{x_{i}}_{s_{j+1},t_{k+1}}+L^{x_{i}}_{s_{j},t_{k+1}}\right.\\
			\left.-L^{x_{i+1}}_{s_{j+1},t_{k}}+L^{x_{i+1}}_{s_{j},t_{k}}+L^{x_{i}}_{s_{j+1},t_{k}}-L^{x_{i}}_{s_{j},t_{k}}\right)
		\end{align*}
		converges in $L^1$ to $\int_0^t\int_0^s\int_a^bf(s,t,x)\,\mathrm{d}L^x_{s,t}$. In particular, when $f$ is differentiable with respect to $x$ and $\partial_xf$ is continuous on $[0,1]^2\times\R$, we deduce from \cite[Theorem 5.1 (i) ]{Ei00} that for any $s\in[0,1]$,
		\begin{align*}
			&\int_0^t\int_a^bf(s,u,x)\,d_{x,u}L^x_1(s,u)
			\\&=-\int_0^t(\mathbf{1}_{[a,b]}\partial_xf)(s,u,W_{s,u})\mathrm{d}u+\int_0^tf(s,u,b)\mathrm{d}_{u}L^b_1(s,u)-\int_0^tf(s,u,a)\mathrm{d}_{u}L^a_1(s,u).
		\end{align*}
		Hence, integrating over $[0,s]$, using \eqref{MonotoneClass} and \eqref{MonotoneClass2} and integrating over $[0,s]$ give
		\begin{align}\label{eqLocTimeIntDet}
			\int_0^t\int_0^s\int_a^bf(s,t,x)\,\mathrm{d}L^x_{s,t}&=-\int_0^s\int_0^t(\mathbf{1}_{[a,b]}\partial_xf)(\xi,u,W_{\xi,u})\mathrm{d}u\,\mathrm{d}\xi+\int_0^s\int_0^tf(\xi,u,b)\mathrm{d}_{\xi,u}L^b_{\xi,u}\nonumber\\&\qquad-\int_0^s\int_0^tf(\xi,u,a)\mathrm{d}_{\xi,u}L^a_{\xi,u}.
		\end{align}
		Hence, letting $a$ (respectively $b$) goes to $-\infty$ (respectively $\infty$), we obtain
		\begin{align}\label{eqStoIntLTContF}
			\int_0^t\int_0^s\int_{\R}f(s,t,x)\,\mathrm{d}L^x_{s,t}=-\int_0^s\int_0^t\partial_xf(\xi,u,W_{\xi,u})\mathrm{d}u\,\mathrm{d}\xi.
		\end{align}
	\end{remark}
	\begin{corollary}\label{corol:EisenSheetdD}
		Let $\Big(W_{s,t}:=(W_{s,t}^{(1)},\cdots,W_{s,t}^{(d)});s\geq0,t\geq0\Big)$ be a $d$-dimensional Brownian sheet defined on an equipped probability space. Let $f:\,[0,1]^2\times\R^d\to\R$ be a continuous function such that for any $(s,t)\in[0,1]^2$, $f(s,t,\cdot)$ is differentiable and for any $i\in\{1,\ldots,d\}$, the partial derivative $\partial_{x_i}f$ is continuous. Then for any $(s,t)\in[0,1]^2$ and any $i\in\{1,\ldots,d\}$, we have
		\begin{align}\label{eq:EisenSheetdD01}
			&\int_0^s\int_0^t\partial_{x_i}f(\xi,u,W_{\xi,u})\mathrm{d}u\,\mathrm{d}\xi=-\int_0^s\int_0^tf(\xi,u,W_{\xi,u})\frac{\mathrm{d}_uW^{(i)}_{\xi,u}}{\xi}\mathrm{d}\xi\nonumber\\
			&\qquad-\int_0^s\int_{1-t}^1f(\xi,1-u,\hat{W}_{\xi,u})\frac{\mathrm{d}_uB^{(i)}_{\xi,u}}{\xi}\mathrm{d}\xi+\int_0^s\int_{1-t}^1f(\xi,1-u,\hat{W}_{\xi,u})\frac{\hat{W}^{(i)}_{\xi,u}}{\xi(1-u)}\mathrm{d}u\,\mathrm{d}\xi,
		\end{align}
		where $\hat{W}^{(i)}_{\xi,u}=W^{(i)}_{\xi,1-u}$ and $B^{(i)}$ is a standard Brownian sheet independent of $(W^{(i)}_{s,1},s\geq0)$. Consequently for any $s\in]0,1]$, the $d$-dimensional Brownian motion $(W_{s,t},t\geq0)$ satisfies
		\begin{align}\label{eq:EisenSheetdD02}
			&\int_0^t\partial_{x_i}f(s,u,W_{s,u})\mathrm{d}u=-\int_0^tf(s,u,W_{s,u})\frac{\mathrm{d}_uW^{(i)}_{s,u}}{s}\nonumber\\
			&
			\qquad-\int_{1-t}^1f(s,1-u,\hat{W}_{s,u})\frac{\mathrm{d}_uB^{(i)}_{s,u}}{s}+\int_{1-t}^1f(s,1-u,\hat{W}_{s,u})\frac{\hat{W}^{(i)}_{s,u}}{s(1-u)}\mathrm{d}u.
		\end{align}
	\end{corollary}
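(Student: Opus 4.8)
The plan is to reduce the $d$-dimensional identity to the scalar local time--space calculus already developed, and then to pass from the two-parameter formula \eqref{eq:EisenSheetdD01} to its fixed-$s$ counterpart \eqref{eq:EisenSheetdD02} by differentiation in $s$.

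First I would record the scalar ($d=1$) version of \eqref{eq:EisenSheetdD01}. For a continuous $f\in\mathcal{H}$ with continuous $\partial_x f$, equation \eqref{eqStoIntLTContF} gives $\int_0^s\int_0^t\int_{\R}f\,\mathrm{d}L^x_{\xi,u}=-\int_0^s\int_0^t\partial_xf(\xi,u,W_{\xi,u})\,\mathrm{d}u\,\mathrm{d}\xi$, while \eqref{eq:StoInt3DLocTime01} expresses the same integral as the forward minus the backward stochastic integral. Equating the two and rewriting the backward integral through \eqref{eqrevB1} together with the Dalang--Walsh representation \eqref{eq:ReprDWTimeRev} — exactly as in the passage from \eqref{eq:IntLocTimeDelta} to \eqref{eq:IntLocTimeDeltaR}, now for a continuous $f$ rather than an elementary $f_{\Delta}$ — yields \eqref{eq:EisenSheetdD01} in dimension one after a straightforward rearrangement of signs. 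The only point to check here is that the manipulations valid for elementary functions survive the $\mathcal{H}$-limit, which follows from the $L^1$-continuity of all the integrals involved, established in Proposition \ref{prop:StoInt3DDcase}.

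Next, I would upgrade to arbitrary $d$ by a conditioning argument, one coordinate at a time. Fix $i$ and let $\mathcal{G}_i=\sigma\big(W^{(j)}_{\xi,u}:j\neq i,\,(\xi,u)\in\R_+^2\big)$. Since the coordinates $W^{(1)},\dots,W^{(d)}$ are independent, conditionally on $\mathcal{G}_i$ the process $W^{(i)}$ is still a standard one-dimensional Brownian sheet, and the auxiliary sheet $B^{(i)}$ produced by \eqref{eq:ReprDWTimeRev} — being a measurable functional of $W^{(i)}$ alone — stays independent of $\mathcal{G}_i$. Freezing a realization of $\{W^{(j)}:j\neq i\}$, I would apply the scalar identity to $g(\xi,u,x):=f(\xi,u,W^{(1)}_{\xi,u},\dots,x,\dots,W^{(d)}_{\xi,u})$, whose continuity together with that of $\partial_x g=\partial_{x_i}f$ follows from the hypotheses on $f$ and the continuity of the frozen paths. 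Since $g(\xi,u,W^{(i)}_{\xi,u})=f(\xi,u,W_{\xi,u})$ and $\partial_x g(\xi,u,W^{(i)}_{\xi,u})=\partial_{x_i}f(\xi,u,W_{\xi,u})$, the resulting conditional identity is precisely \eqref{eq:EisenSheetdD01}.

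Finally, \eqref{eq:EisenSheetdD02} follows from \eqref{eq:EisenSheetdD01}: every term on both sides has the form $\int_0^s(\cdots)\,\mathrm{d}\xi$, so differentiating in the upper limit $s$ — equivalently, applying the fixed-$s$ Eisenbaum formula for the $d$-dimensional Brownian motion $(W_{s,t},t\geq0)$ directly — removes the outer $\mathrm{d}\xi$-integration and replaces $1/\xi$ by $1/s$. The main obstacle I anticipate is the conditioning step: one must justify that the frozen function $g$ lies in $\mathcal{H}$ even though $f$ is only assumed continuous, which I would handle by localization — truncating $f$ outside a large ball in $x$ does not affect the path integrals, since the Brownian sheet paths are almost surely bounded on $[0,1]^2$ — and one must check that the forward and backward stochastic integrals, as well as the almost sure identities, behave correctly under conditioning on $\mathcal{G}_i$, which rests on the independence of $W^{(i)}$ from $\mathcal{G}_i$ and on the $\mathcal{G}_i$-measurability of the frozen paths entering $g$.
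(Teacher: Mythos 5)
Your proposal follows essentially the same route as the paper: conditioning on the components $W^{(j)}$, $j\neq i$, so that $f(\cdot,\cdot,W_{\cdot,\cdot})$ becomes a deterministic function of $W^{(i)}$, applying the one-dimensional local time--space formula (Proposition \ref{prop:StoInt3DDcase} together with \eqref{eqStoIntLTContF} and the relation from \cite{FPS95}), rewriting the backward integral via the Dalang--Walsh representation \eqref{eq:ReprDWTimeRev}, and finally differentiating \eqref{eq:EisenSheetdD01} in $s$ to obtain \eqref{eq:EisenSheetdD02}. Your additional care about whether the frozen function lies in $\mathcal{H}$ (handled by truncation) addresses a point the paper sidesteps by simply assuming $\Vert f\Vert_i<\infty$ in its proof; otherwise the two arguments coincide.
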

	
	\begin{proof} 
		The proof is analogeous to \cite[Section 6]{Ei06}. We denote by $\left(L^x_{s,t}(W^{(i)});x\in\R,s\geq0,t\geq0\right)$ the local time on the plane of $W^{(i)}$ and we adopt the notation
		\begin{align*}
			g(s,t,W_{s,t}^{(1)},\cdots,W_{s,t}^{(i-1)},x,W_{s,t}^{(i+1)},\cdots,W_{s,t}^{(d)})=g(s,t,W_{s,t})\vert_{W_{s,t}^{(k)}=x}.
		\end{align*}
		For any measurable function $g:\,[0,1]^2\times\R^d\to\R$, we define the norm $\Vert\cdot\Vert_i$ by
		\begin{align*}
			\Vert g\Vert_i=2\Big(\int_0^1\int_0^1\E\left[g^2(s,t,W_{s,t})\right]\mathrm{d}s\mathrm{d}t\Big)^{1/2}+\int_0^1\int_0^1\E\Big[\Big|g(s,t,W_{s,t})\frac{W^{(i)}_{s,t}}{st}\Big|\Big] \mathrm{d}s\mathrm{d}t.
		\end{align*}
		For any continuous function $f:\,[0,1]^2\times\R^d\to\R$ and any $i\in\{1,\ldots,d\}$, we note that, conditionally to $(W^{(k)}_{s,t},(s,t)\in[0,1]^2)_{1\leq k\leq d,k\neq i}$, $f(s,t,W_{s,t},0\leq s,t\leq 1)$ is a deterministic function of $(W^{(i)}_{s,t},0\leq s,t\leq 1)$. Suppose that $f(s,t,\cdot)$ is differentiable for any $s,t$, its partial derivative $\partial_{x_i}f$ is continuous and $\Vert f\Vert_i<\infty$ for any $i$. Then, using \eqref{eqStoIntLTContF} in Remark \ref{rem:StoIntLTContF}, relation  \cite[(3.19) ]{FPS95}, Proposition \ref{prop:StoInt3DDcase}  and \cite[Chapter II, Theorem 11]{Pro90} we have
		\begin{align}
			&\int_0^s\int_0^t\partial_{x_i}f(\xi,u,W_{\xi,u})\mathrm{d}u\,\mathrm{d}\xi=-\int_0^s\int_0^t\int_{\R}f(s,t,W_{s,t})\vert_{W_{s,t}^{(i)}=x}\,\mathrm{d}L^x_{s,t}(W_{s,t}^{(i)})\nonumber\\
			=&-\int_0^s\int_0^tf(\xi,u,W_{\xi,u})\frac{\mathrm{d}_uW^{(i)}_{\xi,u}}{\xi}\mathrm{d}\xi+\int_0^s\int_0^tf(\xi,u,W_{\xi,u})\frac{\mathrm{d}^{\ast}_uW^{(i)}_{\xi,u}}{\xi}\mathrm{d}\xi\nonumber\\
			=&-\int_0^s\int_0^tf(\xi,u,W_{\xi,u})\frac{\mathrm{d}_uW^{(i)}_{\xi,u}}{\xi}\mathrm{d}\xi-\int_0^s\int_{1-t}^1f(\xi,1-u,\hat{W}_{\xi,u})\frac{\mathrm{d}_u\hat{W}^{(i)}_{\xi,u}}{\xi}\mathrm{d}\xi.\label{eq:LocTimeIntMultiD}
		\end{align}
		Equation \eqref{eq:EisenSheetdD01} is obtained by substituting \eqref{eq:ReprDWTimeRev} into \eqref{eq:LocTimeIntMultiD}.  We derive \eqref{eq:EisenSheetdD02} by differentiating both sides of \eqref{eq:EisenSheetdD01} with respect to $s$.
	\end{proof}	
	
	\subsection{Local time-space  integration of two parameter random processes and a generalized It\^o formula}
	In this section, we derive an It\^o formula for two parameter random processes. 
	
	Let $h:\,[0,1]^2\times\Omega\times\R\to\R$ be a random function. As before for $a<b$, consider $(x_i)_{0\leq i\leq n}$ a subdivision of $[a,b]$, $(s_j)_{0\leq j\leq m}$ a subdivision of $[0,s]$ and $(t_k)_{0\leq k\leq\ell}$ a subdivision of $[0,t]$. Denote by $\Delta$ the grid $\{(s_j,t_k,x_i),0\leq i\leq n,0\leq j\leq m, 0\leq k\leq\ell\}$. When $h$ is regular enough, we show that, as $|\Delta|$ tends to $0$, the expression 
	\begin{align*}
		&	\sum\limits_{
			\begin{subarray}{}
				0\leq i\leq n,0\leq j\leq m\\
				\quad 0\leq k\leq\ell
			\end{subarray}
		}h(s_j,t_k,\omega,x_i)\Big(L^{x_{i+1}}_{s_{j+1},t_{k+1}}-L^{x_{i+1}}_{s_{j},t_{k+1}}-L^{x_{i}}_{s_{j+1},t_{k+1}}+L^{x_{i}}_{s_{j},t_{k+1}}\\
		&\quad\quad\quad-L^{x_{i+1}}_{s_{j+1},t_{k}}+L^{x_{i+1}}_{s_{j},t_{k}}+L^{x_{i}}_{s_{j+1},t_{k}}-L^{x_{i}}_{s_{j},t_{k}}\Big)
	\end{align*}
	admits a limit in $L^1$ for any $\omega\in\Omega$, denoted by $\int_0^t\int_0^s\int_a^bh(s,t,\omega,x)\,dL^x_{s,t}$. 
	
	The next result extends \eqref{eqStoIntLTContF} to random functions and is a direct consequence of \cite[Theorem 5.1]{Ei00} for local times of the Brownian sheet.  
	\begin{prop}\label{prop:StoIntLTContF}
		Let $h:\,[0,1]^2\times\Omega\times\R\to\R$ be a random real valued function such that for any $\omega\in\Omega$, $(s,t,x)\longmapsto h(s,t,\omega,x)$ is continuous on $[0,1]^2\times\R$, and for any $(s,t,\omega)\in[0,1]^2\times\Omega$, $h(s,t,\omega,\cdot)$ is differentiable. We suppose that the partial derivative $\partial_xh$ is continuous on $[0,1]^2\times\R$ for $\Pb$-a.e. $\omega\in\Omega$. Then for any $(s,t)\in[0,1]^2$, any $(a,b)\in\R^2$ with $a<b$, and for $\Pb$-a.e. $\omega\in\Omega$, the integral $\int_0^s\int_0^t\int_{\R}h(\xi,u,\omega,x)\,\mathrm{d}L^x_{\xi,u}$ exists and we have
		\begin{align*}
			\int_0^s\int_0^t\int_a^bh(\xi,u,\omega,x)\mathrm{d}L^x_{\xi,u}
			=&-\int_0^s\int_0^t(\mathbf{1}_{[a,b]}\partial_xh)(\xi,u,\omega,W_{\xi,u})\mathrm{d}u\mathrm{d}\xi\\
			&+\int_0^s\int_0^th(\xi,u,\omega,b)\mathrm{d}_{\xi,u}L^b_{\xi,u}-\int_0^s\int_0^th(\xi,u,\omega,a)\mathrm{d}_{\xi,u}L^a_{\xi,u}.
		\end{align*}
		Consequently, when $a$ and $b$ tend respectively to $-\infty$ and $\infty$, we get
		\begin{align}\label{eqLoctimeSheet}
			\int_0^s\int_0^t\int_{\R}h(\xi,u,\omega,x)\mathrm{d}L^x_{\xi,u}=-\int_0^s\int_0^t\partial_xh(\xi,u,\omega,W_{\xi,u})\mathrm{d}u\mathrm{d}\xi.
		\end{align}
	\end{prop}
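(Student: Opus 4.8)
The plan is to reduce everything to the one-parameter statement of Eisenbaum \cite[Theorem 5.1]{Ei00} applied slice-by-slice in the first time variable, and then to integrate the resulting identity over $[0,s]$ using the slicing relations \eqref{MonotoneClass} and \eqref{MonotoneClass2}. This mirrors exactly the derivation of the deterministic identities \eqref{eqLocTimeIntDet}--\eqref{eqStoIntLTContF} carried out in Remark \ref{rem:StoIntLTContF}, the only genuinely new feature being that the integrand now carries an $\omega$.

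First I would fix $\xi\in(0,1]$ and work with the one-parameter Brownian motion $(W_{\xi,t})_{t\in[0,1]}$ together with its local time $(L^x_1(\xi,t))$. For $\Pb$-a.e.\ $\omega$ the map $(t,x)\mapsto h(\xi,t,\omega,x)$ is continuous with continuous $x$-derivative, so, viewed as a pathwise deterministic integrand, it satisfies the hypotheses of \cite[Theorem 5.1]{Ei00}; applying that theorem gives
\begin{align*}
\int_0^t\int_a^b h(\xi,u,\omega,x)\,d_{x,u}L^x_1(\xi,u)
&=-\int_0^t(\mathbf{1}_{[a,b]}\partial_x h)(\xi,u,\omega,W_{\xi,u})\,du\\
&\quad+\int_0^t h(\xi,u,\omega,b)\,d_u L^b_1(\xi,u)-\int_0^t h(\xi,u,\omega,a)\,d_u L^a_1(\xi,u).
\end{align*}
By Fubini on $\Omega\times[0,1]$ this identity holds for a.e.\ $(\omega,\xi)$, hence for $\Pb$-a.e.\ $\omega$ it holds for Lebesgue-a.e.\ $\xi\in[0,s]$; this is precisely what licenses the subsequent integration in $\xi$.

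Next I would integrate the displayed identity over $\xi\in[0,s]$. The left-hand side becomes $\int_0^s\int_0^t\int_a^b h\,d_{x,u}L^x_1(\xi,u)\,d\xi$, which by \eqref{MonotoneClass}, applied for each fixed $\omega$ to the bounded function $g=\mathbf{1}_{[a,b]}h(\cdot,\cdot,\omega,\cdot)$ (bounded because $h$ is continuous, hence bounded on the compact set $[0,1]^2\times[a,b]$), equals the two-parameter integral $\int_0^s\int_0^t\int_a^b h(\xi,u,\omega,x)\,dL^x_{\xi,u}$; this simultaneously establishes the existence of the latter as an $L^1$-limit of Riemann sums exactly as in Proposition \ref{prop:StoInt3DDcase}. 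The two boundary terms transform via \eqref{MonotoneClass2}, with $\ell=h(\cdot,\cdot,\omega,b)$ and $\ell=h(\cdot,\cdot,\omega,a)$, into $\int_0^s\int_0^t h(\xi,u,\omega,b)\,d_{\xi,u}L^b_{\xi,u}$ and the analogous term at level $a$, yielding precisely the first displayed formula of the Proposition.

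Finally, the passage $a\to-\infty$, $b\to+\infty$ is purely pathwise and therefore clean: for $\Pb$-a.e.\ $\omega$ the sheet $(\xi,u)\mapsto W_{\xi,u}(\omega)$ is continuous, hence bounded by some $M(\omega)$ on the compact rectangle $[0,s]\times[0,t]$. For $b>M(\omega)$ and $a<-M(\omega)$ those levels are never visited, so $L^b_{\xi,u}=L^a_{\xi,u}=0$ throughout the rectangle and both boundary terms vanish identically, while $\mathbf{1}_{[a,b]}(W_{\xi,u})\equiv 1$; this gives \eqref{eqLoctimeSheet}. I expect the only delicate point to be the Fubini/measurability step that upgrades the fixed-$\xi$, a.e.-$\omega$ validity of Eisenbaum's identity to a.e.-$\xi$ validity for fixed $\omega$, since that is what permits the $\xi$-integration and the application of the slicing relations \eqref{MonotoneClass}--\eqref{MonotoneClass2}; everything else is a direct transcription of the deterministic argument of Remark \ref{rem:StoIntLTContF}.
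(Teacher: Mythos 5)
Your architecture --- apply Eisenbaum's one-parameter theorem slice-by-slice in $\xi$, integrate in $\xi$ via the slicing identities, and kill the boundary terms pathwise once $b>M(\omega)$ and $a<-M(\omega)$ --- is exactly the paper's intended route (the paper offers no proof beyond the remark that the proposition is a direct consequence of \cite[Theorem 5.1]{Ei00}), and your final truncation step is correct. The genuine gap lies in how you transfer statements proved for \emph{deterministic} integrands to the random function $h$. You invoke \cite[Theorem 5.1]{Ei00} for $h(\xi,\cdot,\omega,\cdot)$ ``viewed as a pathwise deterministic integrand'', and you invoke \eqref{MonotoneClass} pathwise with $g=\mathbf{1}_{[a,b]}h(\cdot,\cdot,\omega,\cdot)$. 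For a fixed deterministic function, each of these identities holds only $\Pb$-a.s., with an exceptional null set that depends on the function; one cannot substitute an $\omega$-dependent function and then evaluate at that same $\omega$. This is precisely why Eisenbaum needed a separate theorem for random integrands at all, and why the paper's own Corollary \ref{corol:EisenSheetdD} passes through a conditioning/independence argument before treating $f(s,t,W_{s,t})$ as deterministic. Your slice-wise step is rescued only because \cite[Theorem 5.1]{Ei00} is in fact stated for \emph{random} functions $F(s,x,\omega)$ with a.s.\ regularity hypotheses; it must be applied in that form, directly to the random slice $(t,x,\omega)\longmapsto h(\xi,t,\omega,x)$, not through pathwise substitution. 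The point you flag as the ``only delicate point'' (Fubini in $(\omega,\xi)$) is comparatively harmless; the substitution principle is the real issue.

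The same problem makes your existence claim circular: Proposition \ref{prop:StoInt3DDcase} constructs the integral as an $L^1(\Omega,\Pb)$-limit for deterministic $f\in\mathcal{H}$ via forward/backward stochastic integrals, and \eqref{MonotoneClass} is obtained by a monotone-class argument for deterministic bounded Borel $g$; neither applies verbatim to an $\omega$-dependent integrand, and there is no pathwise measure in the $x$-variable to fall back on, since $x\longmapsto L^x_{s,t}$ is not of bounded variation (this is the reason the stochastic-integral construction exists in the first place). By contrast, your use of \eqref{MonotoneClass2} for the boundary terms is sound, because $\mathrm{d}_{\xi,u}L^{b}_{\xi,u}$ and $\mathrm{d}_uL^{b}_1(\xi,u)\,\mathrm{d}\xi$ are genuine positive measures that agree on rectangles off a single universal null set. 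A clean repair of the main step: take the paper's definition of the left-hand side as a limit of Riemann sums, use the pathwise identity $L^x_{s,t}=\int_0^sL^x_1(\xi,t)\,\mathrm{d}\xi$ from \eqref{DefLocTimeSheet} (valid off one null set simultaneously for all $(s,t,x)$, by continuity) to rewrite every two-parameter Riemann sum as the $\mathrm{d}\xi$-integral of the corresponding one-parameter Riemann sums, and then pass to the limit under the $\mathrm{d}\xi$-integral by dominated convergence, the slice-wise convergence and the domination both being supplied by the random-function form of \cite[Theorem 5.1]{Ei00}, whose summation-by-parts bounds involve only $\sup|h|$, $\sup|\partial_xh|$ and the local times at the levels $a$ and $b$, all finite pathwise. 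With that replacement, your Fubini remark and your truncation argument complete the proof.
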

	
	The relation \eqref{eqLoctimeSheet} provides the definition of the integral  with respect to the local time $L$ for a smooth random function $h$.
	
	Let $(J_{s,t},(s,t)\in D)$ be the process defined by
	\begin{align*}
		J_{s,t}=\int_0^t\int_0^s\int_0^t\int_0^s\mathbf{1}_{\{u<v,\xi>\zeta\}}\mathrm{d}W_{u,\xi}\mathrm{d}W_{v,\zeta},
	\end{align*}
	where $(W_{s,t};(s,t)\in D)$ denotes a real valued Brownian sheet given on an equipped probability space $(\Omega,\mathcal{F},\{\mathcal{F}_{s,t};(s,t)\in D\},\Pb)$. The next result is a generalised It\^o formula for two-parameter Brownian motion.
	\begin{prop}
		Let $h:\,[0,1]^2\times\Omega\times\R\to\R$ be a random bounded function such that, for any $(s,t,x)$, $h(s,t,\cdot,x)$ is  $\mathcal{F}_{s,t}$-measurable, and, $\Pb$-a.e., the partial derivatives $\hpo$, $\hps$, $\hpp$, $\hsp$ and $\htp$ exist and are continuous. Then
		\begin{align*}
			&	h(s,t,\omega,W_{s,t})-h(0,t,\omega,0)\\
			=&\int_0^s\hps(u,t,\omega,W_{u,t})\mathrm{d}s-\frac{s}{2}\int_0^t\int_{\R}\hpo(s,\xi,,\omega,x)\mathrm{d}_{x,\xi}L_2^x(u,\xi) -\frac{t}{2}\int_0^s\int_{\R}\hpo(u,t,\omega,x)\mathrm{d}_{x,u}L_1^x(u,t)\\
			&+\int_0^t\int_0^s\hpo(u,\xi,\omega,W_{u,\xi})\mathrm{d}W_{u,\xi}+\int_0^t\int_0^s\hpp(u,\xi,\omega,W_{u,\xi})\mathrm{d}J_{u,\xi}\\
			&+\frac{1}{2}\int_0^t\int_0^s\int_{\R}\Big\{u\hsp+\hpo+u\xi\htp\Big\}(u,\xi,\omega,x)\mathrm{d}L_{u,\xi}^x.	 
		\end{align*}
	\end{prop}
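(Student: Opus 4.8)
The plan is to read the identity, via the one–parameter It\^o formula, as a re-expression of the planar structure of $W$; to prove it first for smooth $h$; and to recover the stated weak regularity by a mollification argument in which the local time–space integral of Proposition~\ref{prop:StoInt3DDcase} is the decisive device. For $t$ fixed, $(W_{u,t})_{0\le u\le s}$ is a continuous martingale with $\langle W_{\cdot,t}\rangle_u=ut$, and $h(u,t,\omega,\cdot)$ is $\mathcal{F}_{u,t}$–adapted and of finite variation in $u$; hence the classical one–parameter It\^o formula in $u$ yields
\[
h(s,t,\omega,W_{s,t})-h(0,t,\omega,0)=\int_0^s\hps(u,t,\omega,W_{u,t})\,\mathrm du+\int_0^s\hpo(u,t,\omega,W_{u,t})\,\mathrm d_uW_{u,t}+\frac t2\int_0^s\hpp(u,t,\omega,W_{u,t})\,\mathrm du .
\]
The first term is already the drift term of the statement, so the whole content of the theorem is to rewrite the last two terms through the planar calculus.

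For smooth $h$ I would use \eqref{eqLoctimeSheet} of Proposition~\ref{prop:StoIntLTContF}, which gives
\[
\tfrac12\int_0^t\!\int_0^s\!\int_{\R}\{u\hsp+\hpo+u\xi\htp\}(u,\xi,\omega,x)\,\mathrm dL^x_{u,\xi}=-\tfrac12\int_0^s\!\int_0^t\{u\hsq+\hpp+u\xi\hfp\}(u,\xi,\omega,W_{u,\xi})\,\mathrm d\xi\,\mathrm du ,
\]
to replace the local time term by the Lebesgue integral on the right; the asserted formula then becomes a pathwise identity involving only the drift term, the two planar integrals against $\mathrm dW$ and $\mathrm dJ$, the edge terms, and this Lebesgue integral. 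I would prove this reduced identity by unfolding the martingale term $\int_0^s\hpo(u,t,\omega,W_{u,t})\,\mathrm d_uW_{u,t}$ and the second–order term $\tfrac t2\int_0^s\hpp(u,t,\omega,W_{u,t})\,\mathrm du$ of the one–parameter formula over the second variable through the planar (Cairoli--Walsh) calculus \cite{CW75}: over the rectangle $[0,s]\times[0,t]$ this generates the planar integral $\int_0^t\int_0^s\hpo\,\mathrm dW_{u,\xi}$, the integral $\int_0^t\int_0^s\hpp\,\mathrm dJ_{u,\xi}$ carrying the mixed second–order variation of the sheet, the Lebesgue integral with the coefficients $u\hsq+\hpp+u\xi\hfp$ above, and—through the Tanaka/occupation–density formulas applied at the edges $\{u=s\}$ and $\{\xi=t\}$—the two boundary contributions built on $L_2$ and $L_1$. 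Matching terms proves the formula for smooth $h$.

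To pass to a general $h$, I would mollify in the $x$–variable, obtaining $h_n$ that are smooth in $x$ (with $s$–regularity inherited from $h$), apply the smooth case to each $h_n$, and let $n\to\infty$ in the formula written in its local time form, using that $h_n,\hno,\hns,\hnt,\hsn,\htt$ converge locally uniformly to $h,\hpo,\hps,\hpp,\hsp,\htp$. The left–hand side and the drift term converge by dominated convergence; the planar integrals $\int_0^t\int_0^s\hpo\,\mathrm dW_{u,\xi}$ and $\int_0^t\int_0^s\hpp\,\mathrm dJ_{u,\xi}$ converge in $L^2$ by the It\^o isometries for $W$ and for $J$ (using $\hno\to\hpo$ and $\hnt\to\hpp$); and the edge terms converge by the one–parameter theory. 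The key point is that, by the estimate \eqref{Ineq:StoInt3D02}, the local time term is controlled in $L^1$ by $\bigl\|\tfrac12(u\hsp+\hpo+u\xi\htp)\bigr\|$, so its convergence needs only $u\,\hsn+\hno+u\xi\,\htt\to u\,\hsp+\hpo+u\xi\,\htp$ in $\mathcal{H}$; this uses solely the assumed derivatives $\hpo,\hsp,\htp$ and never the higher derivatives $\hsq,\hfp$ that appeared in the smooth computation, which is precisely why the final statement holds under the weak hypotheses and why the second–order term is written as a local time integral.

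The main obstacle is the smooth identity, and specifically the unfolding of the martingale term and the second–order term over the second parameter. Because $h$ is not assumed differentiable in $t$, one cannot simply apply It\^o's formula in the second variable to the integrand $\hpo(u,t,\omega,W_{u,t})$; the planar expansion must be organised so that all the resulting higher–order contributions assemble into the single local time integral with the precise coefficients $u$, $1$ and $u\xi$, and so that the $\mathrm dJ$ integral appears with integrand exactly $\hpp$, with no surviving $\partial_t h$ term. Controlling the planar It\^o correction attached to $J$ and identifying the edge local times is the heart of the computation; a secondary technical difficulty is the $L^2$–stability of the $\mathrm dJ$ integral under the mollification, for which one invokes the quadratic variation $\langle J\rangle$ together with the boundedness of $h$ and the local boundedness of $\hpp$.
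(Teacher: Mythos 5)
Your overall architecture coincides with the paper's: a one-parameter It\^o-type formula in the $s$-direction (your first display is equivalent, under the standing hypothesis that $\hpp$ exists and is continuous, to the paper's \eqref{ItoFormConst}, obtained there from \cite[Theorem 5.3]{Ei00}); a planar unfolding of $\int_0^s\hpo(u,t,\omega,W_{u,t})\,\mathrm{d}_uW_{u,t}$; mollification in $x$ only; and passage to the limit in the local-time term via the $L^1$ bound \eqref{Ineq:StoInt3D02}. The conversion you state between the planar local-time integral and the Lebesgue integral with coefficients $u\hsq+\hpp+u\xi\hfp$ is also correct for the mollified functions, by \eqref{eqLoctimeSheet}.

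The gap is that the step you yourself call ``the heart of the computation'' --- the smooth-case unfolding over the second parameter with no surviving $\partial_t h$ term --- is asserted, not proved: ``matching terms'' is precisely what is missing, and the obstruction you flag (non-differentiability in $t$, which mollification in $x$ does not repair) blocks every naive expansion in the second variable. The paper closes this step with two concrete tools absent from your proposal. First, the Green formula \cite[Theorem 6.3]{CW75}, an off-the-shelf identity which yields directly
\begin{align*}
\int_0^s\hpo(u,t,\omega,W_{u,t})\,\mathrm{d}_uW_{u,t}=\int_0^t\!\int_0^s\hpo\,\mathrm{d}W_{u,\xi}+\int_0^t\!\int_0^s\hpp\,\mathrm{d}J_{u,\xi}+\int_0^t\Big\{\int_0^s\frac{u}{2}\,\htp(u,\xi,\omega,W_{u,\xi})\,\mathrm{d}_uW_{u,\xi}\Big\}\mathrm{d}\xi,
\end{align*}
so the $\mathrm{d}J$ integrand is exactly $\hpp$ and no $t$-derivative ever appears; this is not something to be ``organised'' but a citable result (and, since $\htp$ is assumed continuous, it applies to $h$ itself, with no limit argument needed for the $\mathrm{d}J$ term). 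Second --- and this is the genuinely non-obvious idea --- the leftover correction $\int_0^t\{\int_0^s\frac{u}{2}\htp\,\mathrm{d}_uW_{u,\xi}\}\mathrm{d}\xi$ is handled by applying the one-parameter local time--space formula of \cite{Ei00} not in the $t$-direction (impossible, as you note) but to the function $(u,x)\longmapsto\frac{u}{2}\hnt(u,\xi,\omega,x)$ of the \emph{first} parameter, along each line $\xi=\mathrm{const}$ --- the direction in which differentiability ($\hps$, $\hsp$) is assumed, and the only place where mollification is actually needed, to make $\hnn$ exist --- and then integrating $\xi$ over $[0,t]$. It is this manoeuvre that produces the coefficients $u\hsp+\hpo+u\xi\htp$, the planar local-time integral, and the $L_2$ boundary term (the paper's \eqref{ItoFormHn}--\eqref{ItoFormHinfty}). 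Without these two ingredients your ``reduced identity'' for smooth $h$ has no proof, so the proposal as written does not establish the proposition.
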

	\begin{proof}
		Let $p:\,\R\to\R_+$ be an infinitely differentiable function with compact support such that $\int_{\R}p(y)dy=1$. Let $(h_n,n\in\N)$ be the sequence of random functions defined by
		\begin{align*}
			h_n(s,t,\omega,x)=\int_{\R}h(s,t,\omega,x-\frac{y}{n+1})p(y)\mathrm{d}y.
		\end{align*}
		From Theorem 5.1 (ii) and \cite[Theorem 5.3 ]{Ei00} applied along the line $\xi=$constant, we have
		\begin{align*}
			&\frac{s}{2}\hnt(s,\xi,\omega,W_{s,\xi})=\frac{1}{2}\int_0^s\Big\{u\hnn(u,\xi,\omega,W_{u,\xi})+\hnt(u,\xi,\omega,W_{u,\xi})\Big\}\mathrm{d}\xi\\&\qquad+\int_0^s\frac{u}{2}\htt(u,\xi,\omega,W_{u,\xi})\mathrm{d}_uW_{u,\xi} -\int_0^s\int_{\R}\frac{u\xi}{2}\htt(u,\xi,\omega,x)\mathrm{d}_{x,u}L_1^x(u,\xi)\nonumber\\
			&\qquad=\int_0^s\frac{u}{2}\htt(u,\xi,\omega,W_{u,\xi})\mathrm{d}_uW_{u,\xi}-\frac{1}{2}\int_0^s\int_{\R}\Big\{u\hsn+\hno+u\xi\htt\Big\}(u,\xi,\omega,x)\mathrm{d}_{x,u}L_1^x(u,\xi).
		\end{align*}
		Integrating the above equality over $[0,t]$, we obtain
		\begin{align}\label{ItoFormHn}
			&-\frac{s}{2}\int_0^t\int_{\R}\hno(u,\xi,\omega,x)\mathrm{d}_{x,\xi}L_2^x(u,\xi)=\frac{s}{2}\int_0^t\hnt(u,\xi,\omega,W_{u,\xi})\mathrm{d}\xi\\
			=&\int_0^t\Big\{\int_0^s\frac{u}{2}\htt(u,\xi,\omega,W_{u,\xi})d_uW_{u,\xi}\Big\}\mathrm{d}\xi\notag\\&-\frac{1}{2}\int_0^t\int_0^s\int_{\R}\Big\{u\hsn+\hno+u\xi\htt\Big\}(u,\xi,\omega,x)d_{x,u}L_1^x(u,\xi)\mathrm{d}\xi\nonumber\\
			=&\int_0^t\Big\{\int_0^s\frac{u}{2}\htt(u,\xi,\omega,W_{u,\xi})d_uW_{u,\xi}\Big\}\mathrm{d}\xi-\frac{1}{2}\int_0^t\int_0^s\int_{\R}\Big\{u\hsn+\hno+u\xi\htt\Big\}(u,\xi,\omega,x)\mathrm{d}L_{u,\xi}^x.\nonumber
		\end{align}
		Hence letting $n$ tends to infinity, the dominated convergence theorem yields
		\begin{align}\label{ItoFormHinfty}
			&-\frac{s}{2}\int_0^t\int_{\R}\hpo(u,\xi,\omega,x)\mathrm{d}_{x,\xi}L_2^x(u,\xi)\\ =&\int_0^t\Big\{\int_0^s\frac{u}{2}\htp(u,\xi,\omega,W_{u,\xi})\mathrm{d}_uW_{u,\xi}\Big\}\mathrm{d}\xi-\frac{1}{2}\int_0^t\int_0^s\int_{\R}\Big\{u\hsp+\hpo+u\xi\htp\Big\}(u,\xi,\omega,x)\mathrm{d}L_{u,\xi}^x.\nonumber
		\end{align}
		Moreover, applying \cite[Theorem 5.3 ]{Ei00} along the line $t=$constant, it holds that
		\begin{align}\label{ItoFormConst}
			h(s,t,\omega,W_{s,t})&=h(0,t,\omega,0)+\int_0^s\hps(u,t,\omega,W_{u,t})\mathrm{d}s+\int_0^s\hpo(u,t,\omega,W_{u,t})\mathrm{d}_uW_{u,t}\nonumber\\&\quad-\frac{t}{2}\int_0^s\int_{\R}\hpo(u,t,\omega,x)\mathrm{d}_{x,u}L_1^x(u,t).
		\end{align}	
		Using the Green formula (see for example \cite[Theorem 6.3]{CW75}), we have
		\begin{align*}
			\int_0^s\hpo(u,t,\omega,W_{u,t})d_uW_{u,t}=&\int_0^t\int_0^s\hpo(u,\xi,\omega,W_{u,\xi})\mathrm{d}W_{u,\xi}+\int_0^t\int_0^s\hpp(u,\xi,\omega,W_{u,\xi})\mathrm{d}J_{u,\xi}\nonumber\\
			&+\int_0^t\Big\{\int_0^s\frac{u}{2}\htp(u,\xi,\omega,W_{u,\xi})d_uW_{u,\xi}\Big\}\mathrm{d}\xi.
		\end{align*}	
		Substituting \eqref{ItoFormHinfty} into the above equality, we obtain
		\begin{align}\label{eqGreen2}
			&	\int_0^s\hpo(u,t,\omega,W_{u,t})\mathrm{d}_uW_{u,t}\\ =&\int_0^t\int_0^s\hpo(u,\xi,\omega,W_{u,\xi})\mathrm{d}W_{u,\xi}+\int_0^t\int_0^s\hpp(u,\xi,\omega,W_{u,\xi})\mathrm{d}J_{u,\xi}\nonumber\\
			&-\frac{s}{2}\int_0^t\int_{\R}\hpo(s,\xi,\omega,x)\mathrm{d}_{x,\xi}L_2^x(u,\xi)+\frac{1}{2}\int_0^t\int_0^s\int_{\R}\Big\{u\hsp+\hpo+u\xi\htp\Big\}(u,\xi,\omega,x)\mathrm{d}L_{u,\xi}^x.\nonumber
		\end{align}
		Finally, substituting \eqref{eqGreen2} into \eqref{ItoFormConst} yields
		the desired formula.
	\end{proof}

	\section{Regularising properties of Brownian sheet paths}	
	In this section, we use results in Section \ref{sectloct1} to show regularity properties of some averaging type operator. We first show some bounds.
	\subsection{Davie type Inequalities for the Brownian Sheet}			
	The following estimate will be be used extensively.  
	\begin{prop}\label{prop:DavieSheet1dd}
		Let $W:=\Big(W^{(1)}_{s,t},\cdots,W^{(d)}_{s,t};(s,t)\in[0,1]^2\Big)$ be a $\R^d$-valued Brownian sheet ($d\geq1$) defined on an equipped probability space $(\Omega,\mathcal{F},\mathbb{F},\Pb)$, where $\mathbb{F}=(\mathcal{F}_{s,t};s,t\in[0,1])$. Let   $g\in\mathcal{C}^0\left([0,1]^2,\mathcal{C}^1(\R^{2d})\right)$ such that there exists $\kappa>0$ satisfying
		\begin{equation}\label{eq:BoundSheetdd}
			g(s,t,x,y)\leq \kappa|y|,\quad\text{for all }(s,t,x,y)\in[0,1]^2\times\R^{2d}.
		\end{equation}
		Let $(a,\ve)\in[0,1)\times(0,1)$ such that $a+\ve \leq1$. Then there exist positive constants $\alpha$ and $C$ such that for all $(s,\ps,y)\in]0,1]^2\times\R^d$ with $s\leq\ps$, $(\ps-s,y)\neq(0,0,\ldots,0)$ and all $i\in\{1,\cdots,d\}$, we have
		\begin{align}\label{eq:DavieSheet1dd}
			\E\Big[\exp\Big(\frac{\alpha \sqrt{\ve s}}{|y|+\sqrt{\ps-s}} \Big|\int_0^1\partial_{x_i} g\Big(s,t,\widetilde{W}^{\ve}_{s,t},y+W^{\ve}_{\ps,t}-W^{\ve}_{s,t}\Big)\mathrm{d}t\Big|\Big)\Big]\leq C,
		\end{align} 
		where $\partial_{x_i} g$ denotes the partial derivative of $g$ with respect to one component of the third variable, $|\cdot|$ is the maximum norm on $\R^d$, and $W^{\ve}:=\Big(W^{(\ve,1)}_{s,t},\cdots,W^{(\ve,d)}_{s,t};(s,t)\in[0,1]^2\Big)$, respectively $\widetilde{W}^{\ve}:=\Big(\widetilde{W}^{(\ve,1)}_{s,t},\cdots,\widetilde{W}^{(\ve,d)}_{s,t};(s,t)\in[0,1]^2\Big)$ is the $\R^d$-valued two-parameter Gaussian process given by $W^{(\ve,i)}_{s,t}:=W^{(i)}_{s,a+\ve t}$, respectively $\widetilde{W}^{(i,\ve)}_{s,t}=W^{(i)}_{s,a+\ve t}-W^{(i)}_{s,a}$ for all $i\in\{1,\cdots,d\}$.
	\end{prop}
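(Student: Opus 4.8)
The plan is to linearise the problem with the scaling properties of the sheet, use the It\^o--Tanaka representation \eqref{eq:EisenSheetdD02} to convert the integral of $\partial_{x_i}g$ into honest stochastic integrals plus a singular drift, and then estimate the resulting exponential moments by conditioning on an independent $\sigma$-algebra. Fix $i$ and $(s,\ps,y)$ and set $Y_t:=y+W^{\ve}_{\ps,t}-W^{\ve}_{s,t}=y+(W_{\ps,a+\ve t}-W_{s,a+\ve t})$. Recall from the scaling property that $\overline{W}_{s,t}:=\ve^{-1/2}(W_{s,a+\ve t}-W_{s,a})$ is a standard Brownian sheet, so $\widetilde{W}^{\ve}_{s,t}=\sqrt{\ve}\,\overline{W}_{s,t}$; since the increments over $(0,s]\times\cdot$ and $(s,\ps]\times\cdot$ are independent, the sheet $\overline{W}$ and the process $(Y_t)_t$ are independent, and I set $\mathcal{G}:=\sigma(Y_t,0\le t\le 1)$. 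With $f(s,t,z):=g(s,t,\sqrt{\ve}\,z,Y_t)$ one has $(\partial_{x_i}g)(s,t,\widetilde{W}^{\ve}_{s,t},Y_t)=\ve^{-1/2}\partial_{z_i}f(s,t,\overline{W}_{s,t})$, and the growth bound gives $|f(s,t,z)|\le\kappa|Y_t|$ for every $z$. Conditionally on $\mathcal{G}$, $f$ is a deterministic element of $C^0([0,1]^2,C^1(\R^d))$, bounded by $\kappa\sup_t|Y_t|<\infty$ (so its norm is finite), while $\overline{W}$ and the Dalang--Walsh noise $\overline{B}$ keep their law, being independent of $\mathcal{G}$.

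Applying Corollary~\ref{corol:EisenSheetdD} conditionally on $\mathcal{G}$ (for which $f$ is deterministic) with $t=1$ to the standard sheet $\overline{W}$ yields, writing $\widehat{\overline{W}}_{s,t}=\overline{W}_{s,1-t}$,
\[
\int_0^1 \partial_{z_i}f(s,t,\overline{W}_{s,t})\,\mathrm{d}t
=-\frac1s\int_0^1 f(s,t,\overline{W}_{s,t})\,\mathrm{d}_t\overline{W}^{(i)}_{s,t}
-\frac1s\int_0^1 f(s,1-t,\widehat{\overline{W}}_{s,t})\,\mathrm{d}_t\overline{B}^{(i)}_{s,t}
+\frac1s\int_0^1 f(s,1-t,\widehat{\overline{W}}_{s,t})\frac{\widehat{\overline{W}}^{(i)}_{s,t}}{1-t}\,\mathrm{d}t .
\]
Multiplying by $\ve^{-1/2}$ and by the prefactor $\alpha\sqrt{\ve s}/(|y|+\sqrt{\ps-s})$, the factor $\sqrt{\ve s}\,\ve^{-1/2}s^{-1}$ collapses to $s^{-1/2}$, so the quantity in the exponential of \eqref{eq:DavieSheet1dd} is $|I_1+I_2+I_3|$, where each $I_j$ equals $\alpha/(\sqrt{s}\,(|y|+\sqrt{\ps-s}))$ times the $j$-th integral above. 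By convexity of the exponential, $\exp(|I_1+I_2+I_3|)\le\tfrac13\sum_{j}\exp(3|I_j|)$, so it suffices to bound each $\E[\exp(3|I_j|)]$ by a constant independent of $(s,\ps,y)$.

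For the two martingale terms $I_1,I_2$: conditionally on $\mathcal{G}$ both stochastic integrals are continuous martingales in $t$ (the integrand in $I_2$ being adapted to the Dalang--Walsh filtration through \eqref{eq:ReprDWTimeRev}), with quadratic variation at most $s\kappa^2 Q$, where $Q:=\int_0^1|Y_t|^2\,\mathrm{d}t$ is $\mathcal{G}$-measurable, because $\mathrm{d}\langle\overline{W}^{(i)}_{s,\cdot}\rangle_t=\mathrm{d}\langle\overline{B}^{(i)}_{s,\cdot}\rangle_t=s\,\mathrm{d}t$ and $|f|\le\kappa|Y_t|$. The conditional sub-Gaussian bound $\E[e^{\lambda|M_1|}\mid\mathcal{G}]\le 2e^{\lambda^2 s\kappa^2 Q/2}$ with $\lambda=3\alpha/(\sqrt{s}(|y|+\sqrt{\ps-s}))$ makes $s$ cancel, giving $\E[\exp(3|I_j|)\mid\mathcal{G}]\le 2\exp\!\big(9\alpha^2\kappa^2 Q/(2(|y|+\sqrt{\ps-s})^2)\big)$ for $j=1,2$. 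Setting $\eta:=\sup_t|W_{\ps,a+\ve t}-W_{s,a+\ve t}|/\sqrt{\ps-s}$ and using $(u+v)^2\le 2u^2+2v^2$ with $|y|^2\le(|y|+\sqrt{\ps-s})^2$ and $\ps-s\le(|y|+\sqrt{\ps-s})^2$ gives $Q/(|y|+\sqrt{\ps-s})^2\le 2+2\eta^2$. The centred Gaussian process $t\mapsto(W_{\ps,a+\ve t}-W_{s,a+\ve t})/\sqrt{\ps-s}$ has covariance $a+\ve(t\wedge t')$, so the law of $\eta$ is independent of $(s,\ps,y)$, and Fernique's theorem gives $\E[\exp(9\alpha^2\kappa^2\eta^2)]<\infty$ for $\alpha$ small, uniformly. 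The singular term $I_3$ is the only genuinely delicate point: here $|I_3|\le \tfrac{\alpha\kappa}{\sqrt{s}(|y|+\sqrt{\ps-s})}\int_0^1|Y_{1-t}|\,\tfrac{|\overline{W}^{(i)}_{s,1-t}|}{1-t}\,\mathrm{d}t$, and substituting $v=1-t$, bounding $|Y_v|\le|y|+\sqrt{\ps-s}\,\eta$ and using $\overline{W}^{(i)}_{s,v}\overset{\text{law}}{=}\sqrt{s}\,b_v$ (a standard one-dimensional motion independent of $\mathcal{G}$) reduces matters to $|I_3|\le\alpha\kappa(1+\eta)\,\Xi$ with $\Xi:=\int_0^1|b_v|/v\,\mathrm{d}v$, whose law is parameter-free. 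The obstacle is the $1/(1-t)$ singularity at the time-reversal endpoint: one must show $\Xi$ is \emph{sub-Gaussian}, which I would obtain from a generalised H\"older estimate $\E[\Xi^n]\le 2^n\,\E|N|^n$, whence $\E[\Xi^n]^{1/n}=O(\sqrt n)$ and $\E[e^{\lambda\Xi}]\le e^{C\lambda^2}$ for all $\lambda\ge0$; conditioning on $\eta$ then gives $\E[\exp(3|I_3|)]\le\E[\exp(C(3\alpha\kappa)^2(1+\eta)^2)]<\infty$ for $\alpha$ small, again by Fernique. Choosing $\alpha$ below all three thresholds makes each $\E[\exp(3|I_j|)]$ finite and independent of $(s,\ps,y)$, which proves \eqref{eq:DavieSheet1dd}.
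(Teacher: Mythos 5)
Your proposal is correct, and its skeleton is the same as the paper's: you condition on $\sigma(Y_t,0\le t\le1)$ using independence of the sheet's increments in the first parameter, rescale $\widetilde W^{\ve}$ into a standard sheet, apply the local time--space formula \eqref{eq:EisenSheetdD02} conditionally, and obtain exactly the paper's three-term decomposition (forward integral, Dalang--Walsh backward integral, singular drift), followed by the same Jensen split of the exponential. Where you genuinely diverge is in the estimation of the three exponential moments, which is the technical heart of the proof. The paper's announced key tool is the Barlow--Yor inequality: for the two martingale terms it expands $\E[\exp(\alpha|N_1|)]$ into a power series and bounds each moment $\E[|N_1|^m]$ by $c_1^m m^{m/2}\E[\langle N\rangle_1^{m/2}]$ together with moment bounds for Brownian suprema, then sums the series for small $\alpha$; you instead use the exponential supermartingale bound $\E[e^{\lambda M_1}\mid\mathcal G]\le e^{\lambda^2K/2}$, legitimate here because conditioning makes the quadratic-variation bound $K=s\kappa^2Q$ deterministic, and you then integrate the $\mathcal G$-measurable bound via Gaussian concentration of $\eta$. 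This avoids the combinatorial series entirely, at the price of a two-layer (conditional, then unconditional) argument. For the singular drift term, the paper writes the singularity as $\frac{1}{\sqrt{1-t}}\cdot\frac{1}{2\sqrt{1-t}}$, applies Jensen with respect to the probability measure $\frac{\mathrm{d}t}{2\sqrt{1-t}}$ and Cauchy--Schwarz, and finishes with pointwise exponential moments of independent standard Gaussians; you instead isolate the parameter-free variable $\Xi=\int_0^1|b_v|v^{-1}\,\mathrm{d}v$ and prove it sub-Gaussian. Your moment estimate is indeed valid --- what you call a generalised H\"older estimate is precisely Minkowski's integral inequality, $\|\Xi\|_{L^n}\le\int_0^1 v^{-1}\|b_v\|_{L^n}\,\mathrm{d}v=2\|N\|_{L^n}$ --- and the resulting bound $\E[e^{\lambda\Xi}]\le Ce^{C\lambda^2}$ for all $\lambda\ge0$ holds (it can even be checked by the paper's own Jensen trick applied to $\Xi$, giving $\E[e^{\lambda\Xi}]\le 2e^{2\lambda^2}$), after which conditioning on $\eta$ closes the argument. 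Both routes exploit the same cancellation, namely that $|b_v|\sim\sqrt v$ tames the $1/v$ singularity. Two small points to make explicit in a final write-up: both you and the paper actually use the two-sided bound $|g(s,t,x,y)|\le\kappa|y|$ rather than the one-sided inequality as literally stated in \eqref{eq:BoundSheetdd}; and the martingale property of the $\overline B$-integral should be justified, as you indicate, by adaptedness of the time-reversed sheet to the filtration generated by $\overline W_{s,1}$ and $\overline B$ through \eqref{eq:ReprDWTimeRev}.
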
	
	\begin{proof}
		The proof is based on the multidimensional local time-space calculus formula \eqref{eq:EisenSheetdD02} and the Barlow-Yor Inequality. We only prove \eqref{eq:DavieSheet1dd} when $s<\ps$. The proof in the case where $s=\ps$ and $y\neq(0,\ldots,0)$ follows the same lines.\\
		Fix $(a,\ve)\in[0,1]\times(0,1)$ and $i\in\{1,\cdots,d\}$.   Observe that for any $0<s<\ps\leq1$, $(\widetilde{W}^{\ve}_{s,t};0\leq t\leq1)$ and $(W^{\ve}_{\ps,t}-W^{\ve}_{s,t}; 0\leq t\leq1)$ are independent. Then, conditionally to $(W^{\ve}_{\ps,t}-W^{\ve}_{s,t}; 0\leq t\leq1)$, $g\Big(s,t,\widetilde{W}^{\ve}_{s,t},y+W^{\ve}_{\ps,t}-W^{\ve}_{s,t},0\leq t\leq1\Big)$ is a deterministic function of $(\widetilde{W}^{\ve}_{s,t},0\leq t\leq1)$. Hence, conditionally to $(W^{\ve}_{\ps,t}-W^{\ve}_{s,t}; 0\leq t\leq1)$, we may apply \eqref{eq:EisenSheetdD02} to the $d$-dimensional Brownian motion $(Y_{s,t}:=\ve^{-1/2}\widetilde{W}^{\ve}_{s,t};0\leq t\leq1)$ and to the function 
		$h_{\ps}:\,[0,1]^2\times\R^d\to\R$ given by 
		\begin{align*}
			h_{\ps}(s,t,x)=g\Big(s,t,\sqrt{\ve}\,x,y+W^{\ve}_{\ps,t}-W^{\ve}_{s,t}\Big).
		\end{align*}
		We obtain
		\begin{align*}
			&\frac{\sqrt{\ve s}}{|y|+\sqrt{\ps-s}}\int_0^1\partial_{x_i} g\Big(s,t,\widetilde{W}^{\ve}_{s,t},y+W^{\ve}_{\ps,t}-W^{\ve}_{s,t}\Big)\mathrm{d}t=frac{\sqrt{s}}{|y|+\sqrt{\ps-s}} \int_0^1\partial_{x_i} h_{\ps}(s,t,Y_{s,t})\mathrm{d}t\\
			=&-\frac{1}{|y|+\sqrt{\ps-s}}\int_0^1 h_{\ps}(s,t,Y_{s,t})\frac{d_tY^{(i)}_{s,t}}{\sqrt{s}}-\frac{1}{|y|+\sqrt{\ps-s}}\int_0^1h_{\ps}(s,1-t,Y_{s,1-t})\frac{\mathrm{d}_tB^{(i)}_{s,t}}{\sqrt{s}}\\
			&+\frac{1}{|y|+\sqrt{\ps-s}}\int_0^1\frac{h_{\ps}(s,1-t,Y_{s,1-t})Y^{(i)}_{s,1-t}}{\sqrt{s}(1-t)}\mathrm{d}t\\
			=&-\frac{1}{|y|+\sqrt{\ps-s}}\int_0^1g(s,t,\widetilde{W}^{\ve}_{s,t},y+W^{\ve}_{\ps,t}-W^{\ve}_{s,t})\frac{\mathrm{d}_tY^{(i)}_{s,t}}{\sqrt{s}}\\
			&-\frac{1}{|y|+\sqrt{\ps-s}}\int_0^1g(s,1-t,\widetilde{W}^{\ve}_{s,1-t},y+W^{\ve}_{\ps,1-t}-W^{\ve}_{s,1-t})\frac{\mathrm{d}_tB^{(i)}_{s,t}}{\sqrt{s}}\\
			&+\frac{1}{|y|+\sqrt{\ps-s}}\int_0^1\frac{g(s,1-t,\widetilde{W}^{\ve}_{s,1-t},y+W^{\ve}_{\ps,1-t}-W^{\ve}_{s,1-t})Y^{(i)}_{s,1-t}}{\sqrt{s}(1-t)}\mathrm{d}t=J_1+J_2+J_3.
		\end{align*}
		Using Jensen inequality, one has
		\begin{align*}
			&\E\Big[\exp\Big(\frac{\alpha \sqrt{\ve s}}{|y|+\sqrt{\ps-s}} \Big|\int_0^1\partial_{x_i} g\Big(s,t,\widetilde{W}^{\ve}_{s,t},y+W^{\ve}_{\ps,t}-W^{\ve}_{s,t}\Big)\mathrm{d}t\Big|\Big)\Big]\\
			\le&\frac{1}{3}\Big(\E\Big[\exp(3\alpha |J_1|)\Big]+\E\Big[\exp(3\alpha |J_2|)\Big]+\E\Big[\exp(3\alpha |J_3|)\Big]\Big).
		\end{align*}
		Hence, to get the desired estimate, it suffices to prove that, for every $k\in\{1,2,3\}$, there exist positive constants $\alpha_k$ and $C_k$ such that $\E\left[\exp(\alpha_k |J_k|)\right]\leq C_k.$
		Let us start with the estimate of $J_1$. We consider the martingale 
		\begin{align*}
			\Big(N^{(i)}_{t}:=\int_0^t\frac{g(s,u,\widetilde{W}^{\ve}_{s,u},y+W^{\ve}_{\ps,u}-W^{\ve}_{s,u})}{|y|+\sqrt{\ps-s}}\,\frac{\mathrm{d}_uY^{(i)}_{s,u}}{\sqrt{s}},t\in[0,1]\Big).
		\end{align*}
		For any constant $\alpha>0$, the following exponential expansion formula holds
		\begin{align*}
			\E[\exp(\alpha|J_1|)]=\E\Big[\exp\Big(\alpha\Big|N_{1}^{(i)}\Big|\Big)\Big]
			=1+\sum\limits_{m=1}^{\infty}\frac{\alpha^m\E\Big[\Big|N_{1}^{(i)}\Big|^{m}\Big]}{m!}.
		\end{align*}
		Applying the Barlow-Yor inequality to the martingale $\left(N_t^{(i)},t\in[0,1]\right)$ (see \cite{BY82}) and using \eqref{eq:BoundSheetdd}, 
		there exists a universal constant $c_1$ (not depending on $m$) such that,
		\begin{align*}
			\E\Big[\Big|N_{1}^{(i)}\Big|^{m}\Big]&\leq \E\Big[\sup\limits_{0\leq t\leq 1}|N_{t}^{(i)}|^{m}\Big]\leq c_1^{m}m^{m/2}\E\Big[\langle N^{(i)}\rangle_1^{m/2}\Big]\\
			&\leq c_1^{m}m^{m/2}\E\Big[\Big(\int_{0}^{1}\Big|\frac{g(s,t,\widetilde{W}^{\ve}_{s,t},y+W^{\ve}_{\ps,t}-W^{\ve}_{s,t})}{|y|+\sqrt{\ps-s}}\Big|^2\mathrm{d}t\Big)^{m/2}\Big]\\
			&\leq(c_1\kappa)^mm^{m/2}\E\Big[\Big(\int_0^1\Big|\frac{y+W^{\ve}_{\ps,t}-W^{\ve}_{s,t}}{|y|+\sqrt{\ps-s}}\Big|^2\mathrm{d}t\Big)^{m/2}\Big]. 
		\end{align*} 
		It follows from H\"older and triangle inequalities and the definition of $\widetilde{W}$ that
		\begin{align*}
			&\E\Big[\Big(\int_0^1\Big|\frac{y+W^{\ve}_{\ps,t}-W^{\ve}_{s,t}}{|y|+\sqrt{\ps-s}}\Big|^2\mathrm{d}t\Big)^{m/2}\Big]\leq\E\Big[\sup\limits_{0\leq t\leq 1}\Big|\frac{y+W^{\ve}_{\ps,t}-W^{\ve}_{s,t}}{|y|+\sqrt{\ps-s}}\Big|^m\Big]
			\\
			&\leq2^m\E\Big[1+\sup\limits_{0\leq t\leq 1}\Big|\frac{W^{\ve}_{\ps,t}-W^{\ve}_{s,t}}{\sqrt{\ps-s}}\Big|^m\Big]
			=2^m\E\Big[1+\sup\limits_{0\leq t\leq 1}\Big|\frac{\widetilde{W}^{\ve}_{\ps,t}-\widetilde{W}^{\ve}_{s,t}+W_{\ps,a}-W_{s,a}}{\sqrt{\ps-s}}\Big|^m\Big]\\
			\leq& (6d)^m\Big(1+\sum\limits_{i=1}^d\Big\{\E\Big[\sup\limits_{0\leq t\leq 1}\Big|\frac{\widetilde{W}^{(\ve,i)}_{\ps,t}-\widetilde{W}^{(\ve,i)}_{s,t}}{\sqrt{\ps-s}}\Big|^m\Big]+\E\Big[\Big|\frac{W^{(i)}_{\ps,a}-W^{(i)}_{s,a}}{\sqrt{\ps-s}}\Big|^m\Big]\Big\}\Big)
			\\
			\leq&(6d)^m\Big(1+\sum\limits_{i=1}^d\Big\{\ve^{m/2}\E\Big[\sup\limits_{0\leq t\leq 1}\Big|\frac{\widetilde{W}^{(\ve,i)}_{\ps,t}-\widetilde{W}^{(\ve,i)}_{s,t}}{\sqrt{\ve(\ps-s)}}\Big|^m\Big]+\E\Big[\sup\limits_{0\leq t\leq a}\Big|\frac{W^{(i)}_{\ps,t}-W^{(i)}_{s,t}}{\sqrt{\ps-s}}\Big|^m\Big]\Big\}\Big).
		\end{align*}
		Then, since for every $i\in\{1,\cdots,d\}$, $$\Big(\dfrac{\widetilde{W}^{(\ve,i)}_{\ps,t}-\widetilde{W}^{(\ve,i)}_{s,t}}{\sqrt{\ve(\ps-s)}},0\leq t\leq1\Big)\,\text{ and }\,\Big(\dfrac{W^{(i)}_{\ps,t}-W^{(i)}_{s,t}}{\sqrt{\ps-s}},0\leq t\leq1\Big)$$ are two standard Brownian motions starting from $0$, we deduce from Barlow-Yor inequality that 
		\begin{align*}
			\E\Big[\sup\limits_{0\leq t\leq 1}\Big|\frac{\widetilde{W}^{(\ve,i)}_{\ps,t}-\widetilde{W}^{(\ve,i)}_{s,t}}{\sqrt{\ve(\ps-s)}}\Big|^m\Big]\leq c_1^mm^{m/2}
		\end{align*}
		and
		\begin{align*}
			\E\Big[\sup\limits_{0\leq t\leq a}\Big|\frac{W^{(i)}_{\ps,t}-W^{(i)}_{s,t}}{\sqrt{\ps-s}}\Big|^m\Big]\leq\E\Big[\sup\limits_{0\leq t\leq 1}\Big|\frac{W^{(i)}_{\ps,t}-W^{(i)}_{s,t}}{\sqrt{\ps-s}}\Big|^m\Big]\leq c_1^mm^{m/2}.
		\end{align*}
		Thus,
		\begin{align*}
			\E\left[\exp\left(\alpha\left|J_1\right|\right)\right]
			=1+\sum\limits_{m=1}^{\infty}\frac{\alpha^m\E\left[\left|N_{1}^{(i)}\right|^{m}\right]}{m!}\leq1+2d\sum\limits_{m=1}^{\infty}\frac{(6d\alpha \kappa)^m(1+c_1)^{2m}m^{m}}{m!},
		\end{align*}
		which is finite for $\alpha<\dfrac{1}{6d\kappa e(1+c_1)^2}$. Then there exist positive constants $\alpha_1$ and $C_1$ that do not depend on $h$, $a$, $s$ and $\ps$ such that 
		\begin{align*}
			\E\left[\exp\left(\alpha_1\left|J_1\right|\right)\right]\leq C_1.
		\end{align*}
		The estimation of $J_2$ follows in an analogous manner. More precisely, if we consider the martingale
		\begin{align*}
			\Big(\widehat{N}^{(i)}_t:=\int_0^t\frac{g(s,1-u,\widetilde{W}^{\ve}_{s,1-u},y+W^{\ve}_{\ps,1-u}-W^{\ve}_{s,1-u})}{|y|+\sqrt{\ps-s}}\,\frac{\mathrm{d}_uB^{(i)}_{s,u}}{\sqrt{s}},t\in[0,1]\Big)
		\end{align*}
		then we deduce from the Barlow-Yor inequality that there exist positive constants $\alpha_2$ and $C_2$ such that
		\begin{align*}
			\E\left[\exp\left(\alpha_2\left|J_2\right|\right)\right]=\E\left[\exp\left(\alpha_2\left|\widehat{N}_1^{(i)}\right|\right)\right]\leq C_2.
		\end{align*}
		Next, we estimate $J_3$. Applying (\ref{eq:BoundSheetdd}) and Jensen inequality, we get 
		\begin{align*}
			\E\Big[\exp\Big(\frac{J_3}{16\kappa}\Big)\Big]
			\le&\E\Big[\exp\Big(\frac{1}{16\kappa}\int_{0}^{1}\frac{|g(s,1-t,\widetilde{W}^{\ve}_{s,1-t},y+W^{\ve}_{\ps,1-t}-W^{\ve}_{s,1-t})Y^{(i)}_{s,1-t}|}{(1-t)\sqrt{s}(|y|+\sqrt{\ps-s})}\mathrm{d}t\Big)\Big]\\
			\le&\E\Big[\exp\Big(\frac{1}{16}\int_{0}^{1}\frac{|y+W^{\ve}_{\ps,1-t}-W^{\ve}_{s,1-t}||Y^{(i)}_{s,1-t}|}{(1-t)\sqrt{s}(|y|+\sqrt{\ps-s})}\mathrm{d}t\Big)\Big]\\
			=&\E\Big[\exp\Big(\frac{1}{8}\int_{0}^{1}\Big|\frac{y+W^{\ve}_{\ps,1-t}-W^{\ve}_{s,1-t}}{|y|+\sqrt{\ps-s}}\Big|\Big|\frac{Y^{(i)}_{s,1-t}}{\sqrt{s(1-t)}}\Big|\frac{\mathrm{d}t}{2\sqrt{1-t}}\Big)\Big]\\
			\leq&\int_{0}^{1}\E\Big[\exp\Big(\frac{1}{8}\Big|\frac{y+W^{\ve}_{\ps,1-t}-W^{\ve}_{s,1-t}}{|y|+\sqrt{\ps-s}}\Big|\Big|\frac{Y^{(i)}_{s,1-t}}{\sqrt{s(1-t)}}\Big|\Big)\Big]\frac{\mathrm{d}t}{2\sqrt{1-t}}\\
			\leq&\int_{0}^{1}\E\Big[\exp\Big(\frac{1}{8}\Big(1+\Big|\frac{W^{\ve}_{\ps,1-t}-W^{\ve}_{s,1-t}}{\sqrt{\ps-s}}\Big|\Big)\Big|\frac{Y^{(i)}_{s,1-t}}{\sqrt{s(1-t)}}\Big|\Big)\Big]\frac{\mathrm{d}t}{2\sqrt{1-t}}.
		\end{align*}
		Moreover, by Cauchy-Schwarz inequality,	we have
		\begin{align*}		
			&\E\Big[\exp\Big(\frac{1}{8}\Big(1+\Big|\frac{W^{\ve}_{\ps,1-t}-W^{\ve}_{s,1-t}}{\sqrt{\ps-s}}\Big|\Big)\Big|\frac{Y^{(i)}_{s,1-t}}{\sqrt{s(1-t)}}\Big|\Big)\Big]	
			\\
			\leq&\E\Big[\exp\Big(\frac{1}{16}\Big(1+\Big|\frac{W^{\ve}_{\ps,1-t}-W^{\ve}_{s,1-t}}{\sqrt{\ps-s}}\Big|\Big)^2+\frac{1}{16}\Big|\frac{Y^{(i)}_{s,1-t}}{\sqrt{s(1-t)}}\Big|^2\Big)\Big]	\\
			\leq&\E\Big[\exp\Big(\frac{1}{8}+\frac{1}{8}\Big|\frac{W^{\ve}_{\ps,1-t}-W^{\ve}_{s,1-t}}{\sqrt{\ps-s}}\Big|^2+\frac{1}{16}\Big|\frac{Y^{(i)}_{s,1-t}}{\sqrt{s(1-t)}}\Big|^2\Big)\Big]\\
			=&\E\Big[\exp\Big(\frac{1}{8}+\frac{a+\ve(1-t)}{8}\Big|\frac{W^{\ve}_{\ps,1-t}-W^{\ve}_{s,1-t}}{\sqrt{(a+\ve(1-t))\ps-s}}\Big|^2+\frac{1}{16}\Big|\frac{Y^{(i)}_{s,1-t}}{\sqrt{s(1-t)}}\Big|^2\Big)\Big]\\
			\leq& \E\Big[\exp\Big(\frac{1}{8}+\frac{1}{4}\Big|\frac{W^{\ve}_{\ps,1-t}-W^{\ve}_{s,1-t}}{\sqrt{(a+\ve(1-t))\ps-s}}\Big|^2+\frac{1}{4}\Big|\frac{Y^{(i)}_{s,1-t}}{\sqrt{s(1-t)}}\Big|^2\Big)\Big].
		\end{align*}
		Observe that for every $t\in[0,1]$ fixed, the random variables 
		$$
		\dfrac{W^{(\ve,k)}_{\ps,1-t}-W^{(\ve,k)}_{s,1-t}}{\sqrt{(a+\ve(1-t))(\ps-s)}},\,k=1,\ldots,d\,\text{ and }\,\dfrac{Y_{s,1-t}^{(i)}}{\sqrt{s(1-t)}}
		$$ 
		are independent and normally distributed  with mean $0$ and variance $1$. Thus
		\begin{align*}
			&\E\Big[\exp\Big(\frac{1}{8}+\frac{1}{4}\Big|\frac{W^{\ve}_{\ps,1-t}-W^{\ve}_{s,1-t}}{\sqrt{(a+\ve(1-t))(\ps-s)}}\Big|^2+\frac{1}{4}\Big|\frac{Y_{s,1-t}^{(i)}}{\sqrt{s(1-t)}}\Big|^2\Big)\Big]\\
			&=e^{1/8}\prod\limits_{k=1}^d\E\Big[\exp\Big(\frac{1}{4}\Big|\frac{W^{(\ve,k)}_{\ps,1-t}-W^{(\ve,k)}_{s,1-t}}{\sqrt{(a+\ve(1-t))(\ps-s)}}\Big|^2\Big)\Big]\E\Big[\exp\Big(\frac{1}{4}\Big|\frac{Y_{s,1-t}^{(i)}}{\sqrt{s(1-t)}}\Big|^2\Big)\Big]<\infty. 
		\end{align*} 
		Hence, $\E\left[\exp\left(J_3/16\kappa\right)\right]$ is finite and, as a consequence, there exist positive constants $\alpha_3$ and $C_3$ (which do not depend on $a$, $\ve$, $s$ and $\ps$) such that $\E[\exp(\alpha_3|J_3|)]\leq C_3$. 
		This ends the proof.  
	\end{proof}
	
	\begin{corollary}\label{corol:DavieSheet1dds1}
		Let $b:\,[0,1]^2\times\R^d\to\R$ be a bounded Borel measurable function such that $\Vert b\Vert_{\infty}\leq1$. Let $W^{\ve}$ be defined as in Proposition \ref{prop:DavieSheet1dd}. Then for every $(a,\ve)\in[0,1]\times(0,1)$, every $(s,\ps,x,\xp)\in]0,1]^2\times\R^{2d}$ with $s\leq\ps$ and $(s,x)\neq(\ps,\xp)$, we have
		\begin{align}\label{eq:DavieSheet2dd}
			\E\left[\exp\left(\frac{\alpha \sqrt{\ve s}}{|\xp-x|+\sqrt{\ps-s}}\left|\int_0^1\left\{b(s,t,\xp+W^{\ve}_{\ps,t})-b(s,t,x+W^{\ve}_{s,t} )\right\}dt\right|\right)\right]\leq C,
		\end{align}
		where $\alpha$ and $C$ are the constants in Proposition \ref{prop:DavieSheet1dd}.
	\end{corollary}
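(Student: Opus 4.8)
The plan is to deduce \eqref{eq:DavieSheet2dd} from Proposition \ref{prop:DavieSheet1dd} in three moves: (i) regularise $b$; (ii) turn the increment $b(s,t,\xp+W^{\ve}_{\ps,t})-b(s,t,x+W^{\ve}_{s,t})$ into an integral of single-component partial derivatives via the fundamental theorem of calculus, each summand being exactly of the form controlled by \eqref{eq:DavieSheet1dd}; and (iii) pass to the limit in the regularisation. Throughout I write $y=\xp-x$, and, recalling $W^{\ve}_{s,t}=\widetilde{W}^{\ve}_{s,t}+W_{s,a}$, I set $\widetilde{x}=x+W_{s,a}$ and $z_t=y+W^{\ve}_{\ps,t}-W^{\ve}_{s,t}$, so that $x+W^{\ve}_{s,t}=\widetilde{x}+\widetilde{W}^{\ve}_{s,t}$ and $\xp+W^{\ve}_{\ps,t}=\widetilde{x}+\widetilde{W}^{\ve}_{s,t}+z_t$; note that $z_t$ is precisely the fourth argument appearing in \eqref{eq:DavieSheet1dd}.

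First I would fix a sequence $(b_n)_n$, smooth in $(t,x)$, uniformly bounded by $\Vert b\Vert_\infty\le 1$, obtained from $b$ by mollification, so that $b_n\in\mathcal{C}^0([0,1]^2,\mathcal{C}^1(\R^{2d}))$ and $b_n\to b$ in $L^1_{loc}$. For fixed $\lambda\in[0,1]$ and $i\in\{1,\dots,d\}$ define the (conditionally) deterministic function
\[
g^{(i)}_{n,\lambda}(s,t,u,v)=b_n(s,t,\widetilde{x}+u+\lambda v)\,v_i ,
\]
which satisfies $|g^{(i)}_{n,\lambda}(s,t,u,v)|\le\Vert b_n\Vert_\infty|v|\le|v|$ in the maximum norm, i.e. the growth bound \eqref{eq:BoundSheetdd} with $\kappa=1$. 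Its derivative in the $i$-th component of the third variable is $\partial_{x_i}g^{(i)}_{n,\lambda}(s,t,u,v)=\partial_{x_i}b_n(s,t,\widetilde{x}+u+\lambda v)\,v_i$, so the fundamental theorem of calculus along the segment from $x+W^{\ve}_{s,t}$ to $\xp+W^{\ve}_{\ps,t}$ gives
\[
b_n(s,t,\xp+W^{\ve}_{\ps,t})-b_n(s,t,x+W^{\ve}_{s,t})=\sum_{i=1}^d\int_0^1\partial_{x_i}g^{(i)}_{n,\lambda}\big(s,t,\widetilde{W}^{\ve}_{s,t},z_t\big)\,\mathrm{d}\lambda .
\]

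The key step is to apply Proposition \ref{prop:DavieSheet1dd} to each $g^{(i)}_{n,\lambda}$ with $y=\xp-x$. Since $\widetilde{W}^{\ve}_{s,\cdot}$, the increment $W^{\ve}_{\ps,\cdot}-W^{\ve}_{s,\cdot}$, and $W_{s,a}$ are increments of the sheet over pairwise disjoint rectangles, they are mutually independent; conditioning on $W_{s,a}$ freezes $\widetilde{x}$ into a genuine constant, renders $g^{(i)}_{n,\lambda}$ deterministic, and leaves the joint law of $(\widetilde{W}^{\ve}_{s,\cdot},\,W^{\ve}_{\ps,\cdot}-W^{\ve}_{s,\cdot})$ unchanged. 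Thus \eqref{eq:DavieSheet1dd} yields, uniformly in $\widetilde{x}$, $\lambda$ and $n$,
\[
\E\Big[\exp\Big(\tfrac{\alpha\sqrt{\ve s}}{|\xp-x|+\sqrt{\ps-s}}\,\big|\textstyle\int_0^1\partial_{x_i}g^{(i)}_{n,\lambda}(s,t,\widetilde{W}^{\ve}_{s,t},z_t)\,\mathrm{d}t\big|\Big)\Big]\le C .
\]
Integrating the previous identity in $t$, using Fubini and the triangle inequality, the left side of \eqref{eq:DavieSheet2dd} with $b_n$ in place of $b$ is bounded by $\sum_i\int_0^1|\int_0^1\partial_{x_i}g^{(i)}_{n,\lambda}\,\mathrm{d}t|\,\mathrm{d}\lambda$; a single application of Jensen's inequality to the exponential, with respect to the uniform probability measure on $\{1,\dots,d\}\times[0,1]$, together with the tower property, transfers the bound $C$ to this combination and proves \eqref{eq:DavieSheet2dd} for $b_n$. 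Inspecting the proof of Proposition \ref{prop:DavieSheet1dd}, its estimate holds for every constant below an explicit threshold; applying it with $d\alpha$ in place of $\alpha$ (admissible for $\alpha$ small) exactly cancels the factor $d$ coming from the average, so the corollary's constant is the stated $\alpha$.

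Finally I would remove the regularisation. For each $t\in(0,1]$ the variables $\xp+W^{\ve}_{\ps,t}$ and $x+W^{\ve}_{s,t}$ are Gaussian with strictly positive variance, so $b_n\to b$ in $L^1_{loc}$ gives $\int_0^1 b_n(s,t,\xp+W^{\ve}_{\ps,t})\,\mathrm{d}t\to\int_0^1 b(s,t,\xp+W^{\ve}_{\ps,t})\,\mathrm{d}t$ in $L^1(\Omega)$, and likewise for the second term; passing to an a.s.\ convergent subsequence and invoking Fatou's lemma carries \eqref{eq:DavieSheet2dd} over to the merely bounded measurable $b$. The main obstacle I anticipate is precisely this bookkeeping at the level of \emph{exponential} moments: one must preserve the exponential bound while simultaneously averaging over the $d$ directional derivatives and the segment parameter $\lambda$ (handled by convexity, at the cost of a dimensional constant that is absorbed into $\alpha$), and while exchanging the smooth approximations for $b$ (handled by the nondegeneracy of the sheet's one-dimensional marginals together with Fatou).
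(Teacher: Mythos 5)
Your proposal is correct and follows essentially the same route as the paper's proof: smooth approximation of $b$, the fundamental theorem of calculus rewriting the increment through the auxiliary functions $\widehat{b}_u(s,t,y,z)=z_i\,b(s,t,x+y+uz)$ (your $g^{(i)}_{n,\lambda}$), an application of Proposition \ref{prop:DavieSheet1dd} made legitimate by the independence of $\widetilde{W}^{\ve}_{s,\cdot}$, $W^{\ve}_{\ps,\cdot}-W^{\ve}_{s,\cdot}$ and $W_{s,a}$ (you condition on $W_{s,a}$ where the paper disintegrates over $\Pb_{W_{s,a}}(\mathrm{d}\zeta)$, which is equivalent), and Jensen's inequality over $\{1,\dots,d\}\times[0,1]$ with the factor $d$ absorbed into $\alpha$. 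The only cosmetic deviations are that you make the $d\alpha$ rescaling explicit (which the paper leaves implicit) and close the approximation step via an a.s.\ subsequence and Fatou rather than the paper's appeal to Vitali's convergence theorem; both are valid.
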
	
	
	\begin{proof}
		
		We first suppose that $b$ is differentiable and compactly supported.
		By the fundamental theorem of vector calculus, we obtain
		\begin{align*}
			&\Big|\int_0^1\{b(s,t,\xp+W^{\ve}_{\ps,t})-b(s,t,x+W^{\ve}_{s,t})\}\mathrm{d}t\Big|\\
			=&\Big|\int_0^1\int_0^1\nabla_{x}b_{}(s,t,x+W^{\ve}_{s,t}{} +u(\xp-x+W^{\ve}_{\ps,t}-W^{\ve}_{s,t}))\cdot(\xp-x+W^{\ve}_{\ps,t}-W^{\ve}_{s,t})\mathrm{d}u \mathrm{d}t\Big|\\
			\leq&\sum\limits_{i=1}^d\Big|\int_0^1\int_0^1\frac{\partial b}{\partial x_i}(s,t,\widetilde{W}^{\ve}_{s,t}+x+W_{s,a}{} +u(\xp-x+W^{\ve}_{\ps,t}-W^{\ve}_{s,t}))(\xp_i-x_i+W^{\ve,i}_{\ps,t}-W^{\ve,i}_{s,t})\mathrm{d}u \mathrm{d}t\Big|\\
			\le&\sum\limits_{i=1}^d\Big|\int_0^1\int_0^1\frac{\partial \widehat{b}_u}{\partial x_i}(s,t,\widetilde{W}^{\ve}_{s,t}+W_{s,a},\xp-x+W^{\ve}_{\ps,t}-W^{\ve}_{s,t})\mathrm{d}u \mathrm{d}t\Big|, 
		\end{align*}
		where for every $u\in[0,1]$, $\widehat{b}_u:\,[0,1]^2\times\R^{2d}$ is the function defined by 
		$$\widehat{b}_u(s,t,y,z)=z_ib(s,t,x+ y+uz).$$
		Since $(\widetilde{W}^{\ve}_{s,t},t\in[0,1])$ and $W_{s,a}$ are independent, it follows from Jensen inequality and \eqref{eq:DavieSheet1dd} applied to the function $g_u:\,(s,t,y,z)\longmapsto \widehat{b}_u(s,t,y+\zeta,z)$ that there exist positive constants $\alpha$ and $C$ such that
		\begin{align}
			&\E\Big[\exp\Big(\frac{\alpha \sqrt{\ve s}}{|\xp-x|+\sqrt{\ps-s}}\Big|\int_0^1\left\{b(s,t,\xp+W^{\ve}_{\ps,t}{} )-b(s,t,x+W^{\ve}_{s,t}{} )\right\}\mathrm{d}t\Big|\Big)\Big]\nonumber\\
			\le&\frac{1}{d}\sum\limits_{i=1}^d\int_0^1\E\Big[\exp\Big(\frac{\alpha \sqrt{\ve s}}{|\xp-x|+\sqrt{\ps-s}}\int_0^1\frac{\partial \widehat{b}_u}{\partial x_i}(s,t,\widetilde{W}^{\ve}_{s,t}+W_{s,a},W^{\ve}_{\ps,t}-W^{\ve}_{s,t}+\xp-x)\mathrm{d}t\Big)\Big]\mathrm{d}u\nonumber\\
			\le&\frac{1}{d}\sum\limits_{i=1}^d\int_0^1\int_{\R}\E\Big[\exp\Big(\frac{\alpha d\sqrt{\ve s}}{|\xp-x|+\sqrt{\ps-s}}\Big|\int_0^1\frac{\partial g_u}{\partial x_i}(s,t,\widetilde{W}^{\ve}_{s,t}+\zeta,W^{\ve}_{\ps,t}-W^{\ve}_{s,t}+\xp-x)\mathrm{d}t\Big|\Big)\Big]\Pb_{W_{s,a}}(\mathrm{d}\zeta) \mathrm{d}u\nonumber\\
			\leq &C.\label{eq:DavieSheetEst1dd}
		\end{align}
		When $b$ is not differentiable, then, since the set of compacly supported and differentiable functions is dense in $L^{\infty}([0,1]^2\times\R^d)$, there exists a sequence $(b_n,n\in\N)$ of compactly supported and differentiable functions which converges a.e. to $b$ on $[0,1]^2\times\R^d$, and the desired result follows from the Vitali's convergence theorem. The proof is completed.
	\end{proof}
	
	\begin{corollary}\label{corol:DavieSheetdd34}
		Let $b:\,[0,1]\times\R^d\to\R$ be a Borel measurable function such that $|b(t,x)|\leq1$ everywhere on $[0,1]\times\R^d$.  
		For $(s,\ps,x,\xp)\in]0,1]^2\times\R^{2d}$ with $s\leq\ps$, $(s,x)\neq(\ps,\xp)$, $0\leq a< \pa\leq 1$ and $(x,\xp)\in\R^{2d}$, define 
		\begin{align*}
			\rho(s,x;\ps,\xp):=\int_a^{\pa}\left\{b(t,\xp+W_{\ps,t})-b(t,x+W_{s,t})\right\}\mathrm{d}t. 
		\end{align*}
		Then, for every $(s,\ps)\in]0,1]^2$ and every $\eta>0$,
		\begin{align}\label{eq:EstDavieSigma1dd1}
			\Pb\left(\sqrt{s}|\rho(s,x;\ps,\xp)|\geq\eta\sqrt{\ve}(|\xp-x|+\sqrt{\ps-s})\right)
			\leq Ce^{-\alpha\eta},
		\end{align}
		where $\ve=\pa-a$.
	\end{corollary}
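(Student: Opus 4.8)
The plan is to reduce \eqref{eq:EstDavieSigma1dd1} to the exponential moment bound \eqref{eq:DavieSheet2dd} of Corollary \ref{corol:DavieSheet1dds1} by a single change of variables, and then to convert that moment bound into the desired tail estimate via the exponential Chebyshev inequality. No genuinely new analytic input is needed; the corollary is essentially a repackaging of \eqref{eq:DavieSheet2dd}.

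First I would perform the substitution $t=a+\ve r$ with $\ve=\pa-a$ in the definition of $\rho$. Since $W^{\ve}_{s,t}=W_{s,a+\ve t}$ as in Proposition \ref{prop:DavieSheet1dd}, this turns the integral over $[a,\pa]$ into an integral over $[0,1]$ against $W^{\ve}$, at the cost of an overall factor $\ve$. Explicitly, setting $\beta(r,z)=b(a+\ve r,z)$ --- a Borel function with $\|\beta\|_{\infty}\leq1$, viewed as a function on $[0,1]^2\times\R^d$ that is constant in its first argument --- one obtains
\[
\rho(s,x;\ps,\xp)=\ve\int_0^1\big\{\beta(r,\xp+W^{\ve}_{\ps,r})-\beta(r,x+W^{\ve}_{s,r})\big\}\,\mathrm{d}r.
\]

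Next I would apply Corollary \ref{corol:DavieSheet1dds1} to $\beta$. Its inner integral is precisely $\rho/\ve$, so \eqref{eq:DavieSheet2dd} reads
\[
\E\Big[\exp\Big(\frac{\alpha\sqrt{\ve s}}{|\xp-x|+\sqrt{\ps-s}}\cdot\frac{|\rho|}{\ve}\Big)\Big]\leq C,
\]
and since $\sqrt{\ve s}/\ve=\sqrt{s}/\sqrt{\ve}$, this is the exponential moment bound $\E[\exp(\alpha Z)]\leq C$ for the nonnegative random variable $Z:=\sqrt{s}\,|\rho|\big/\big(\sqrt{\ve}(|\xp-x|+\sqrt{\ps-s})\big)$. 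The standing constraints $s\leq\ps$ and $(s,x)\neq(\ps,\xp)$ ensure that $Z$ is well defined and that the hypotheses of Corollary \ref{corol:DavieSheet1dds1} are met.

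Finally, the tail bound follows from the exponential Chebyshev inequality: for any $\eta>0$,
\[
\Pb(Z\geq\eta)=\Pb\big(e^{\alpha Z}\geq e^{\alpha\eta}\big)\leq e^{-\alpha\eta}\,\E[e^{\alpha Z}]\leq Ce^{-\alpha\eta},
\]
which is exactly \eqref{eq:EstDavieSigma1dd1} once the definition of $Z$ is unwound. I do not expect any substantial obstacle: the only points demanding care are bookkeeping ones --- correctly carrying the factor $\ve$ out of the change of variables so that $\sqrt{\ve s}$ becomes $\sqrt{s}/\sqrt{\ve}$, and verifying that $\beta$ inherits the boundedness and measurability required to invoke Corollary \ref{corol:DavieSheet1dds1}.
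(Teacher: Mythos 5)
Your proposal is correct and follows essentially the same route as the paper's own proof: the change of variables $t=a+\ve r$ producing the rescaled drift $\widetilde{b}_{\ve}(t,x)=b(a+\ve t,x)$ (your $\beta$), an application of Corollary \ref{corol:DavieSheet1dds1} to it, and then the exponential Chebyshev inequality, using $\sqrt{\ve s}/\ve=\sqrt{s}/\sqrt{\ve}$ to match the normalisation. Your write-up is in fact slightly more explicit than the paper's, which compresses the Chebyshev step into a single displayed line.
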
	
	
	\begin{proof}
		Use the change of variable $v=a+\ve t$ to obtain
		\begin{align*}
			\rho(s,x;\ps,\xp)&=\int_a^{\pa}\left\{b(v,\xp+W_{\ps,v})-b(v,x+W_{s,v})\right\}\mathrm{d}v\\
			&=\ve\int_0^1\left\{b(a+\ve t,\xp+W_{\ps,a+\ve t}{} )-b(a+\ve t,x+W_{s,a+\ve t}{} )\right\}\mathrm{d}t\\
			&=\ve\int_0^1\left\{\widetilde{b}_{\ve}(t,\xp+W^{\ve}_{\ps,t}{} )-\widetilde{b}_{\ve}(t,x+W^{\ve}_{s,t}{} )\right\}\mathrm{d}t,
		\end{align*}
		where $\ve=\pa-a$ and $\widetilde{b}_{\ve}:\,(t,x)\longmapsto b(a+\ve t,x)$.  
		Hence, by (\ref{eq:DavieSheet2dd}) and Chebychev inequality, we have
		\begin{align*}
			&\Pb\Big(\sqrt{s}|\rho(s,x;\ps,\xp)|>\eta\sqrt{\ve}(|\xp-x|+\sqrt{\ps-s})\Big)\\
			&=\Pb\Big(\frac{\alpha\sqrt{\ve s}}{|\xp-x|+\sqrt{\ps-s}}\Big|\int_0^1\left\{\widetilde{b}_{\ve}(t,\xp+W^{\ve}_{\ps,t}{} )-\widetilde{b}_{\ve}(t,x+W^{\ve}_{s,t}{} )\right\}\mathrm{d}t \Big|>\alpha\eta\Big)\leq Ce^{-\alpha\eta}, 
		\end{align*}		
		The proof is completed.  
	\end{proof}

	\subsection{On the regularising properties of Brownian sheet paths}
	For any Bounded Borel measurable function $b:\,\R_+\times\R^d\to\R$ such that $|b(t,x)|\leq 1$ everywhere, any nonnegative integers $n$, $k$, we define the averaging type operator $T^W_{I_{nk}}[b]$ defined by
	
	\begin{align*}
		T^W_{I_{nk}}[b](s,x)=\int_{I_{nk}}b(t,W_{s,t}+x)\mathrm{d}t,\quad\forall\,s\in\R_+,
	\end{align*}
	where $I_{nk}=[k2^{-n},(k+1)2^{-n}]$. Define also
	\begin{align*}
		\rho_{nk}(s,x;\ps,\xp)=T^W_{I_{nk}}[b](\ps,\xp)-T^W_{I_{nk}}[b](s,x)=\int_{I_{nk}}\{b(t,W_{\ps,t}+\xp)-b(t,W_{s,t}+x)\}\mathrm{d}t,\quad\forall\,(s,\ps)\in\R_+^2.
	\end{align*}	
	
	The aim of this section is to provide a continuity property of the operator  $T^W_{I_{nk}}[b]$. In particular, we derive the following result which gives the modulus of continuity of $\rho_{nk}$.
	
	\begin{theorem}\label{theo:Pseudometric}
		Let $b$ satisfy conditions of Corollary \ref{corol:DavieSheetdd34}. Then there exists a subset $\Omega_{0}$ of $\Omega$ with $\Pb(\Omega_{0})=1$ and a positive random constant $C_0$ such that 
		\begin{align}\label{Pseudometric}
			|\rho_{nk}(s,x;\ps,\xp)(\omega)|\leq \frac{C_0(\omega)2^{-n/2}}{\sqrt{s}}\Big[n+ \log^+\frac{1}{|\xp-x|+\sqrt{\ps-s}}\Big]\Big(|\xp-x|+\sqrt{\ps-s}\Big)
		\end{align}
		for all $\omega\in\Omega_{0}$,  all $(s,\ps,x,\xp)\in]0,1]^2\times[-1,1]^{2d}$ with $s\leq\ps,\,(\ps,\xp)\neq (s,x)$ and all choices of integers $n,\,k$ with $n\geq1$, $0\leq k\leq 2^n-1$, where $C_0$  does not depend on $n$, $k$, $s$ and $\ps$.
	\end{theorem}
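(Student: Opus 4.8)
The plan is to promote the exponential tail estimate of Corollary~\ref{corol:DavieSheetdd34} to a pathwise modulus of continuity by a Borel--Cantelli argument on a countable dyadic grid, followed by a chaining (dyadic interpolation) argument. Throughout I write $z=(s,x)$, $\zp=(\ps,\xp)$ and measure separation in the parabolic pseudometric $d(z,\zp):=|\xp-x|+\sqrt{\ps-s}$, which satisfies the triangle inequality because $r\mapsto\sqrt r$ is subadditive. With $\ve=2^{-n}$, Corollary~\ref{corol:DavieSheetdd34} reads
\[
\Pb\Big(\sqrt{s}\,|\rho_{nk}(z;\zp)|\geq\eta\,2^{-n/2}\,d(z,\zp)\Big)\leq Ce^{-\alpha\eta},
\]
uniformly in $n,k$, in $s\leq\ps$, and in $x,\xp$; this is the sole probabilistic input.

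First I would fix, for each scale $p\geq0$, a grid $\mathcal{D}_p$ adapted to the parabolic scaling, namely the points of $]0,1]\times[-1,1]^d$ whose spatial coordinates lie in $2^{-p}\Z^d$ and whose time coordinate lies in $2^{-2p}\Z$, so that the cardinality of $\mathcal{D}_p$ is at most $C_d2^{(d+2)p}$ and every point lies within distance $C2^{-p}$ (in the metric $d$) of a point of $\mathcal{D}_p$. For each $n\geq1$, each $0\leq k\leq 2^n-1$, each $p\geq0$, and each pair of neighbouring points $z,\zp\in\mathcal{D}_p$ with $d(z,\zp)\leq C2^{-p}$ and $s\leq\ps$, I apply the displayed tail bound with $\eta=\eta_{n,p}:=\lambda(n+p)$. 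Each point has $O_d(1)$ neighbours, so there are at most $C_d\,2^{n}2^{(d+2)p}$ such configurations, and choosing $\lambda$ with $\alpha\lambda>(d+2)\log2$ makes $\sum_{n\geq1}\sum_{p\geq0}2^{n}2^{(d+2)p}Ce^{-\alpha\lambda(n+p)}<\infty$. By Borel--Cantelli there is a full-measure set $\Omega_0$ on which all but finitely many of these events fail; since $|\rho_{nk}|\leq 2\cdot2^{-n}$ always and only finitely many scales $p$ occur among the exceptions, after enlarging a random constant one obtains, on $\Omega_0$, the \emph{grid estimate}
\[
\sqrt{s}\,|\rho_{nk}(z;\zp)|\leq C_0\,2^{-n/2}(n+p)\,2^{-p}\qquad\text{for all }p\text{ and all neighbouring }z,\zp\in\mathcal{D}_p.
\]
The two sources of the logarithmic factor are already visible here: the $2^{n}$ dyadic intervals produce the term $n$, while the $2^{(d+2)p}$ grid points at separation scale $p$ produce $p\sim\log^+(1/d(z,\zp))$.

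Next I would remove the grid restriction by chaining. Given arbitrary $z,\zp$ with $\delta:=d(z,\zp)>0$, set $p_0:=\lceil\log_2(1/\delta)\rceil$, so $2^{-p_0}\sim\delta$, and choose nested approximations $z^{(p)},\zp{}^{(p)}\in\mathcal{D}_p$ ($p\geq p_0$) respecting the time-ordering and satisfying $d(z^{(p)},z^{(p+1)})\leq C2^{-p}$ with $z^{(p)}\to z$. Writing $\rho_{nk}(z;\zp)=F_{nk}(\zp)-F_{nk}(z)$ with $F_{nk}:=T^W_{I_{nk}}[b]$ and telescoping,
\[
F_{nk}(z)-F_{nk}(z^{(p_0)})=-\sum_{p\geq p_0}\big(F_{nk}(z^{(p)})-F_{nk}(z^{(p+1)})\big),
\]
each summand is an increment between neighbouring grid points controlled by the grid estimate. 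Since $\sum_{p\geq p_0}(n+p)2^{-p}\leq C(n+p_0)2^{-p_0}\sim C\big(n+\log^+\tfrac1\delta\big)\delta$, and the central increment $\rho_{nk}(z^{(p_0)};\zp{}^{(p_0)})$ is of the same order, the triangle inequality for $d$ yields exactly $\sqrt{s}\,|\rho_{nk}(z;\zp)|\leq C_0\,2^{-n/2}\big(n+\log^+\tfrac1\delta\big)\delta$, which is \eqref{Pseudometric} after dividing by $\sqrt{s}$.

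The main obstacle is twofold. The first, and most serious, is the convergence $F_{nk}(z^{(p)})\to F_{nk}(z)$ that legitimises the telescoping, that is, the a.s.\ continuity of $(s,x)\mapsto\int_{I_{nk}}b(t,W_{s,t}+x)\,\mathrm{d}t$; this is not automatic for merely measurable $b$, and is precisely where the smoothing effect must enter. One route is to prove the estimate first for continuous (or compactly supported differentiable) $b$, where the convergence follows from dominated convergence and continuity of $W$, with $C_0$ depending only on $\|b\|_\infty$ and $d$, and then to transfer to general bounded $b$ by approximation $b_m\to b$, exploiting both the uniformity of Corollary~\ref{corol:DavieSheetdd34} over $\|b\|_\infty\leq1$ (as already used there via Vitali's theorem) and the occupation/local-time representation of $T^W_{I_{nk}}[b]$ recorded in Section~\ref{sectloct1}, which supplies the needed continuity in $(s,x)$. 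The second delicate point is the prefactor $\sqrt{s}$ and the small-$s$ regime: the temporal resolution $2^{-2p}$ must be refined below $s$, and one must keep the smaller time-coordinate of each chained pair comparable to $s$ so that the uniform-in-$s\leq\ps$ constants of Corollary~\ref{corol:DavieSheetdd34} can be used without deterioration. Apart from these points the argument is the standard Davie--Kolmogorov scheme, and the only genuinely careful bookkeeping is the matching of grid cardinalities to the tail rate, which is what reproduces the precise factor $n+\log^+\!\big(1/(|\xp-x|+\sqrt{\ps-s})\big)$.
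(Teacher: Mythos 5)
Your first half --- the parabolic dyadic grids, the union bound with threshold proportional to $n+p$ so that $2^{n}2^{(d+2)p}e^{-\alpha\lambda(n+p)}$ is summable, and the chaining that converts this into the factor $n+\log^{+}\big(1/(|\xp-x|+\sqrt{\ps-s})\big)$ --- is essentially the paper's Lemma \ref{lem:PseudoMetric1}, up to packaging: the paper states it as ``for every $\ve>0$ there is a \emph{deterministic} $C_{\ve}$ valid outside a set of probability $\ve/2$'' and only converts this into a random constant on a full-measure set inside the proof of the theorem, whereas you run Borel--Cantelli directly; these are equivalent. On dyadic quadruples this part is complete and correct, precisely because the approximating chains terminate after finitely many steps, so no continuity of $T^{W}_{I_{nk}}[b]$ is ever invoked.

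The genuine gap is the passage from grid points to arbitrary $(s,x;\ps,\xp)$, which you correctly identify as ``the most serious obstacle'' but do not close; in the paper this is not a remark but the bulk of the work (Lemmas \ref{lem:IntIndicFuncEstim} and \ref{lem:NoiseRegular}). Your telescoping identity needs $F_{nk}(z^{(p)})\to F_{nk}(z)$, i.e.\ a.s.\ continuity of $(s,x)\mapsto\int_{I_{nk}}b(t,x+W_{s,t})\,\mathrm{d}t$ at \emph{all} points simultaneously, and your proposed repair does not deliver it. First, transferring from continuous $b_m$ to $b$ with $b_m\to b$ a.e.\ requires $\rho^{(b_m)}_{nk}(s,x;\ps,\xp)\to\rho^{(b)}_{nk}(s,x;\ps,\xp)$ for all $(s,x)$ on one full-measure set; for \emph{fixed} $(s,x)$ this follows from Fubini and the fact that $W_{s,t}$ has a density, but uniformly in $(s,x)$ it is equivalent to showing that $\sqrt{s}\int_{0}^{1}\mathbf{1}_{U}(t,x+W_{s,t})\,\mathrm{d}t$ is small for all $(s,x)$ whenever the Lebesgue measure of $U$ is small --- which is exactly the paper's Lemma \ref{lem:IntIndicFuncEstim}, i.e.\ the ingredient that is missing. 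Second, you cannot keep one random constant by a union bound over the infinitely many approximants $b_m$: the exceptional sets accumulate probability (the deterministic constant of the grid estimate is uniform over $\Vert b\Vert_\infty\le 1$, but the exceptional set is not). The paper circumvents this by approximating $\mathbf{1}_{U}$ with a \emph{monotone} sequence of continuous functions, so that the good events $D_r$ in the proof of Lemma \ref{lem:IntIndicFuncEstim} are nested and their intersection retains probability $\geq 1-\ve$. Third, the one concrete mechanism you cite --- the occupation/local-time representation of $T^{W}_{I_{nk}}[b]$ from Section \ref{sectloct1} --- exists only for $d=1$: for $d\geq 2$ the one-parameter path $t\mapsto W_{s,t}$ has no occupation density, so no such representation is available in the generality of Theorem \ref{theo:Pseudometric}. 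The paper instead derives the needed a.s.\ continuity (Lemma \ref{lem:NoiseRegular}) from Lusin's theorem combined with Lemma \ref{lem:IntIndicFuncEstim}, and then extends the dyadic estimate to general quadruples by taking dyadic approximations $(s_q,x_q)\to(s,x)$, $(\ps_q,\xp_q)\to(\ps,\xp)$ and passing to the limit. Until an argument of this type is supplied, your chaining step is unjustified for merely bounded measurable $b$, and the proof is incomplete.
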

	
	\begin{remark}
		The factor $1/\sqrt{s}$ on the right side of \eqref{Pseudometric} shows that the averaging operator $T^{W}_{I}[b]$ has a singularity of order $-1/2$ at $s=0$. This is a consequence of the scaling and boundary properties of the Brownian sheet.
	\end{remark}
	
	\begin{corollary}\label{corol:Pseudometric}
		Let $b$  satisfy conditions of Corollary \ref{corol:DavieSheetdd34}. There exist $\Omega_1\subset\Omega$ with $\Pb(\Omega_1)=1$ and a positive random constant $C_1=C_1(\omega)$ such that 
		\begin{align*}
			\left|T^W_I[b](\ps,\xp)-T^W_I[b](s,x)\right|\leq \frac{C_1\sqrt{|I|}}{\sqrt{s}}    \Big[1+\log^+\frac{1}{(|\xp-x|+\sqrt{\ps-s})|I|}\Big]\Big(|\xp-x|+\sqrt{\ps-s}\Big),
		\end{align*}
		for all $\omega\in\Omega_1$, all $(s,\ps,x,\xp)\in]0,1]^2\times[-1,1]^{2d}$ with $s\leq\ps,\,(\ps,\xp)\neq (s,x)$, all sub-interval $I$ of $[0,1]$, where $|I|$ denotes the length of $I$ and $C_1$  does not depend on $n$, $k$, $\ps$ and $\xp$.
	\end{corollary}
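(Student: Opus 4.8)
The plan is to deduce Corollary~\ref{corol:Pseudometric} from Theorem~\ref{theo:Pseudometric} by splitting an arbitrary sub-interval $I\subseteq[0,1]$ into dyadic pieces and summing the per-piece estimates. Throughout I write $\delta:=|\xp-x|+\sqrt{\ps-s}$, and I work on the full-measure event $\Omega_1:=\Omega_0$ provided by Theorem~\ref{theo:Pseudometric}, retaining its random constant $C_0$. Let $n^{*}\ge 1$ be the integer determined by $2^{-n^{*}-1}<|I|\le 2^{-n^{*}}$ (if $|I|>1/2$ one simply begins the decomposition at level $1$). The canonical dyadic decomposition then writes $I$ as a disjoint union
\[
I=\bigsqcup_{m\ge n^{*}}\bigsqcup_{j}I_{m,k_{m,j}},
\]
using at most two dyadic intervals of each length $2^{-m}$; since $I\subseteq[0,1]$ every piece satisfies $0\le k_{m,j}\le 2^{m}-1$. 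By additivity of the integral defining $T^{W}$,
\[
T^{W}_{I}[b](\ps,\xp)-T^{W}_{I}[b](s,x)=\sum_{m\ge n^{*}}\sum_{j}\rho_{m,k_{m,j}}(s,x;\ps,\xp),
\]
so the task reduces to summing the bounds of Theorem~\ref{theo:Pseudometric} over the pieces.

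Applying the estimate \eqref{Pseudometric} to each $\rho_{m,k_{m,j}}$ and using that at most two indices $j$ occur at each level $m$ gives
\[
\big|T^{W}_{I}[b](\ps,\xp)-T^{W}_{I}[b](s,x)\big|\le\frac{2C_0\,\delta}{\sqrt{s}}\sum_{m\ge n^{*}}2^{-m/2}\big[m+\log^{+}(1/\delta)\big].
\]
Both resulting series are of geometric type and are controlled by their leading terms: there is an absolute constant $c$ with $\sum_{m\ge n^{*}}2^{-m/2}\le c\,2^{-n^{*}/2}$ and $\sum_{m\ge n^{*}}m\,2^{-m/2}\le c\,n^{*}\,2^{-n^{*}/2}$ (the latter using $n^{*}\ge 1$). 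Since $2^{-n^{*}/2}\le\sqrt{2}\,\sqrt{|I|}$ and $n^{*}\le\log_{2}(1/|I|)$, I obtain an absolute constant $c'$ with
\[
\big|T^{W}_{I}[b](\ps,\xp)-T^{W}_{I}[b](s,x)\big|\le\frac{c'C_0\sqrt{|I|}}{\sqrt{s}}\Big[\log(1/|I|)+\log^{+}(1/\delta)\Big]\delta.
\]

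The final step is to recast the logarithmic factor in the required form. Because the spatial arguments lie in $[-1,1]^{2d}$ and the time arguments in $[0,1]$, the pseudodistance is bounded, $\delta\le 3$. A short case analysis — distinguishing $\delta\le 1$ from $1<\delta\le 3$, and within the latter $\log(1/|I|)\ge\log 3$ from $\log(1/|I|)<\log 3$, and using only $\delta\le 3$ and $|I|\le 1$ — shows that
\[
\log(1/|I|)+\log^{+}(1/\delta)\le c''\Big(1+\log^{+}\tfrac{1}{\delta\,|I|}\Big)
\]
for an absolute constant $c''$. Substituting this and setting $C_1:=c'c''C_0$ yields exactly the asserted inequality on $\Omega_1$.

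I expect the main obstacle to be this last identification: the decomposition naturally produces $\log(1/|I|)+\log^{+}(1/\delta)$, whereas the statement demands $\log^{+}(1/(\delta|I|))$, and the two agree up to constants only because $\delta$ is bounded. Without the compactness of the spatial domain the term $\log(1/|I|)$ could not be absorbed in the regime where $\delta$ is of order one and $|I|$ is small, so it is essential to exploit that $(x,\xp)\in[-1,1]^{2d}$ and $(s,\ps)\in[0,1]^{2}$. The remaining ingredients — that the dyadic decomposition uses a bounded number of intervals per scale and that $\sum_{m}m\,2^{-m/2}$ converges — are routine.
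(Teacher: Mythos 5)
Your proof is correct, and at its core it is the same strategy as the paper's: both reduce Corollary \ref{corol:Pseudometric} to Theorem \ref{theo:Pseudometric} by cutting $I$ into dyadic intervals, at most two per scale, and summing the geometric series $\sum_{\ell}2^{-\ell/2}\big(\ell+\log^{+}(1/\delta)\big)$, where $\delta:=|\xp-x|+\sqrt{\ps-s}$. The difference lies in how the cutting is organised. The paper first assumes the endpoints $a<\pa$ of $I$ are dyadic, telescopes $T^W_{I}=T^W_{\pa}-T^W_{a}$ along nested approximations $a_\ell,\pa_\ell$ (nearest multiples of $2^{-\ell}$), so that each increment is controlled by a single $\rho_{\ell+1,k}$, and then removes the dyadicity assumption at the very end via continuity of $\gamma\longmapsto T^W_{\gamma}[b]$. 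You instead apply the canonical (Whitney-type) decomposition of the interior of an arbitrary $I$ into maximal dyadic intervals, which treats all intervals in one pass and replaces the continuity step by countable additivity of the integral over the pieces; this is legitimate because $b$ is bounded and Theorem \ref{theo:Pseudometric} holds almost surely simultaneously for all $n,k$ with one constant $C_0$, so your argument is, if anything, slightly cleaner. A point in your favour: the conversion of $\log(1/|I|)+\log^{+}(1/\delta)$ into $c\big(1+\log^{+}\frac{1}{\delta|I|}\big)$, which genuinely requires the boundedness $\delta\leq 3$ coming from $(x,\xp)\in[-1,1]^{2d}$ and $(s,\ps)\in[0,1]^2$, is spelled out in your proposal, whereas the paper jumps from its bound with bracket $\big[1+\log^{+}\frac{1}{\pa-a}+\log^{+}\frac{1}{\delta}\big]$ to the stated form without comment, although exactly the same case analysis is needed there. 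One small slip: your intermediate display should carry an additive $+1$ in the bracket, i.e.\ $\big[1+\log(1/|I|)+\log^{+}(1/\delta)\big]$; as written it fails in the edge case $|I|$ close to $1$ with $\delta\geq1$ (for $I=[0,1]$ its right-hand side vanishes), which is precisely the case $|I|>1/2$ you deferred, where $n^{*}\leq\log_2(1/|I|)$ no longer holds. Since your final bound restores the $1$, this is a bookkeeping correction rather than a gap.
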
		
	The next result follows immediately from \eqref{Pseudometric} when $s=\ps$ by integrating with respect to $s$ over $I_{n\ell}$.	
	\begin{corollary}\label{corol:Pseudometric2}
		Let $b$  satisfy conditions of Corollary \ref{corol:DavieSheetdd34}. There exist $\Omega_2\subset\Omega$ with $\Pb(\Omega_2)=1$ and a positive random constant $C_2=C_2(\omega)$ such that 
		\begin{align*}
			\left|\int_{I_{n\ell}}T^W_{I_{nk}}[b](s,\xp)\mathrm{d}s-\int_{I_{n\ell}}T^W_{I_{nk}}[b](s,x)\mathrm{d}s\right|\leq  C_22^{-n}    \Big(n+\log^+\frac{1}{|\xp-x|}\Big)|\xp-x|,
		\end{align*}
		for all $\omega\in\Omega_2$, all $(x,\xp)\in[-1,1]^{2d}$ and all choices of integers $n$, $k$, $\ell$, $n\geq1$, $0\leq k,\ell\leq 2^n-1$.
	\end{corollary}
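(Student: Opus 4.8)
The plan is to obtain the statement as a direct consequence of the pseudometric estimate \eqref{Pseudometric} of Theorem \ref{theo:Pseudometric}, specialised to the diagonal $\ps=s$ and then integrated in $s$ over $I_{n\ell}$, exactly as the preceding remark indicates. First I would take $\Omega_2:=\Omega_0$, the full-measure event furnished by Theorem \ref{theo:Pseudometric}, and set $\ps=s$ in \eqref{Pseudometric}. Since $\sqrt{\ps-s}=0$ on the diagonal and, by definition, $\rho_{nk}(s,x;s,\xp)=T^W_{I_{nk}}[b](s,\xp)-T^W_{I_{nk}}[b](s,x)$, the bound reduces to the spatial increment estimate, valid for every $\omega\in\Omega_2$, every $s\in]0,1]$, all $x,\xp\in[-1,1]^d$, and uniformly in $n\geq1$ and $0\leq k\leq2^n-1$:
\begin{align*}
	\big|T^W_{I_{nk}}[b](s,\xp)-T^W_{I_{nk}}[b](s,x)\big|\leq\frac{C_0(\omega)2^{-n/2}}{\sqrt{s}}\Big(n+\log^+\frac{1}{|\xp-x|}\Big)|\xp-x|.
\end{align*}

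Next I would integrate this increment over $s\in I_{n\ell}=[\ell2^{-n},(\ell+1)2^{-n}]$. Passing the absolute value inside by the triangle inequality and factoring out the $s$-independent quantity $C_0(\omega)2^{-n/2}\big(n+\log^+(1/|\xp-x|)\big)|\xp-x|$, the entire estimate collapses to controlling the elementary integral $\int_{I_{n\ell}}s^{-1/2}\,\mathrm{d}s$. A routine Fubini argument, resting only on the continuity of the Brownian sheet paths and the boundedness of $b$, ensures that $s\mapsto T^W_{I_{nk}}[b](s,x)(\omega)$ is measurable, so that the $s$-integrals appearing in the statement are well defined and the interchange is legitimate.

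The sole point requiring a moment's care is this last integral, since for the boundary cell $\ell=0$ the singularity of $s^{-1/2}$ lands at the left endpoint of the integration domain. The order $-1/2$ weight is nonetheless integrable, so no divergence arises: for $\ell=0$ one has $\int_0^{2^{-n}}s^{-1/2}\,\mathrm{d}s=2\cdot2^{-n/2}$, while for $\ell\geq1$,
\begin{align*}
	\int_{I_{n\ell}}\frac{\mathrm{d}s}{\sqrt{s}}=2\cdot2^{-n/2}\big(\sqrt{\ell+1}-\sqrt{\ell}\big)=\frac{2\cdot2^{-n/2}}{\sqrt{\ell+1}+\sqrt{\ell}}\leq\frac{2^{-n/2}}{\sqrt{\ell}}\leq2^{-n/2},
\end{align*}
so that $\int_{I_{n\ell}}s^{-1/2}\,\mathrm{d}s\leq2\cdot2^{-n/2}$ uniformly in $\ell\in\{0,\dots,2^n-1\}$. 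Combining the three displays gives
\begin{align*}
	\Big|\int_{I_{n\ell}}T^W_{I_{nk}}[b](s,\xp)\,\mathrm{d}s-\int_{I_{n\ell}}T^W_{I_{nk}}[b](s,x)\,\mathrm{d}s\Big|\leq2C_0(\omega)2^{-n}\Big(n+\log^+\frac{1}{|\xp-x|}\Big)|\xp-x|,
\end{align*}
whence the claim with $C_2:=2C_0$ and $\Omega_2:=\Omega_0$. Because the whole argument is merely a pointwise integration of an estimate that already holds simultaneously for every admissible parameter on the full-measure set $\Omega_0$, there is no genuine obstacle; the result does follow immediately, the only subtlety being the integrability of the $1/\sqrt{s}$ weight across the boundary cell $I_{n0}$, which I have just verified does not spoil the uniform $2^{-n/2}$ bound.
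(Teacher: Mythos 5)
Your proposal is correct and follows exactly the route the paper intends: the paper states that the corollary ``follows immediately from \eqref{Pseudometric} when $s=\ps$ by integrating with respect to $s$ over $I_{n\ell}$,'' which is precisely your argument, and your explicit uniform bound $\int_{I_{n\ell}}s^{-1/2}\,\mathrm{d}s\leq 2\cdot 2^{-n/2}$ (including the boundary cell $\ell=0$) correctly supplies the one detail the paper leaves implicit, yielding the stated estimate with $C_2=2C_0$ on $\Omega_2=\Omega_0$.
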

	
	The proof of Theorem \ref{theo:Pseudometric} is done in two steps. We start by proving the when $(s,x)$ and $(\ps,\xp)$ are dyadic couples. Then the desired estimate will follow from a density property of the set of dyadic numbers and a continuity lemma.
	
	\begin{lemma}\label{lem:PseudoMetric1}
		For every $\ve>0$, there exist a positive deterministic constant $C_{\ve}$ such that,  for any 
		$b$ satisfying conditions of Corollary \ref{corol:DavieSheetdd34}, we can find $\Omega_{\ve}\subset\Omega$ with $\Pb(\Omega_{\ve})\geq1-\ve/2$ and   
		\begin{align*}
			|\rho_{nk}(s,x;\ps,\xp)(\omega)|\leq \frac{C_{\ve}2^{-n/2}}{\sqrt{s}}\Big[n+ \log^+\frac{1}{|\xp-x|+\sqrt{\ps-s}}\Big]\Big(|\xp-x|+\sqrt{\ps-s}\Big)
		\end{align*}
		for all $\omega\in\Omega_{\ve}$, all dyadic quadruples $(s,\ps,x,\xp)\in]0,1]^2\times[-1,1]^{2d}$ with $s\leq\ps,\,(\ps,\xp)\neq (s,x)$ and all choices of integers $n,\,k$ with $n\geq1$, $0\leq k\leq 2^n-1$. 
	\end{lemma}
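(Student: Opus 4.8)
The plan is to combine the sub-exponential tail bound of Corollary \ref{corol:DavieSheetdd34}, specialised to the interval $I_{nk}$, with a union bound and a chaining argument over a parabolically scaled dyadic grid. Specialising Corollary \ref{corol:DavieSheetdd34} with $a=k2^{-n}$ and $\pa=(k+1)2^{-n}$, so that $\ve=2^{-n}$, yields the uniform estimate
\begin{align*}
	\Pb\Big(\sqrt{s}\,|\rho_{nk}(s,x;\ps,\xp)|\geq\eta\,2^{-n/2}\big(|\xp-x|+\sqrt{\ps-s}\big)\Big)\leq Ce^{-\alpha\eta},
\end{align*}
valid for every dyadic quadruple with $s\leq\ps$ and every $\eta>0$. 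Writing $\delta:=|\xp-x|+\sqrt{\ps-s}$, the decisive feature is that $\alpha$ and $C$ are \emph{universal}, i.e. independent of $n$, $k$, $s$, $\ps$ and of $b$; this is exactly what makes a single global event serve all quadruples at once.

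First I would fix, for each integer $q\geq0$, the grid $\mathcal{G}_q$ of points $(s,x)\in(0,1]\times[-1,1]^d$ with $s\in2^{-2q}\N$ and $x\in2^{-q}\Z^d$. The half-rate in the time variable is dictated by the $\sqrt{\ps-s}$ scaling in $\delta$, so that two neighbouring points of $\mathcal{G}_q$ (differing by one grid step in a single coordinate) lie at $\delta$-distance of order $2^{-q}$; there are at most $C\,2^{(d+2)q}$ such neighbouring pairs. For each $n\geq1$, each $0\leq k\leq2^n-1$, each $q$ and each neighbouring pair $(P,P')$ in $\mathcal{G}_q$ with smaller time coordinate $s_{P,P'}$, I apply the displayed bound with threshold $\eta_{q,n}:=\beta(n+q)+\gamma$. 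Taking the union over all $(n,k,q,P,P')$, the total probability is at most
\begin{align*}
	C\sum_{n\geq1}\sum_{q\geq0}2^{n}\,2^{(d+2)q}\,e^{-\alpha(\beta(n+q)+\gamma)},
\end{align*}
which converges as soon as $\alpha\beta>(d+2)\log 2$, and can be made $\leq\ve/2$ by taking $\gamma$ (hence the eventual $C_{\ve}$) large. On the complementary event $\Omega_{\ve}$, of probability at least $1-\ve/2$, the single-scale bound $\sqrt{s_{P,P'}}\,|\rho_{nk}(P;P')|\leq C(n+q)2^{-n/2}2^{-q}$ holds simultaneously for all indices and all neighbouring pairs.

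Next I would pass from neighbouring pairs to an arbitrary dyadic quadruple by telescoping. Given $(s,x;\ps,\xp)$, pick the coarse level $q_0$ with $2^{-q_0}$ of order $\delta$, and choose approximating sequences $P_q\to(s,x)$ and $P'_q\to(\ps,\xp)$ in $\mathcal{G}_q$ (eventually constant, since the endpoints are dyadic), with the time coordinates approximated from above so that every intermediate smaller time stays comparable to $s$. Using $\rho_{nk}(A;B)=F(B)-F(A)$ with $F:=T^W_{I_{nk}}[b]$, each difference $\rho_{nk}(P_{q+1};P'_{q+1})-\rho_{nk}(P_q;P'_q)$ splits into increments between neighbouring grid points on the two sides, giving
\begin{align*}
	\rho_{nk}(s,x;\ps,\xp)=\rho_{nk}(P_{q_0};P'_{q_0})+\sum_{q\geq q_0}\big[\rho_{nk}(P_{q+1};P_q)+\rho_{nk}(P'_q;P'_{q+1})\big],
\end{align*}
where the coarse pair $(P_{q_0},P'_{q_0})$ lies within $O(1)$ cells of $\mathcal{G}_{q_0}$ and is joined by $O(1)$ ordered neighbour steps. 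Feeding the single-scale bound into each of the $O(1)$ increments at every level, and using that every smaller time is at least of order $s$, yields
\begin{align*}
	\sqrt{s}\,|\rho_{nk}(s,x;\ps,\xp)|\leq C\,2^{-n/2}\sum_{q\geq q_0}(n+q)2^{-q}\leq C\,2^{-n/2}\,\delta\,\Big(n+\log^+\tfrac{1}{\delta}\Big),
\end{align*}
since $\sum_{q\geq q_0}(n+q)2^{-q}$ is of order $(n+q_0)2^{-q_0}$, while $q_0$ is of order $\log^+(1/\delta)$ and $2^{-q_0}$ of order $\delta$. Dividing by $\sqrt{s}$ gives the asserted inequality with a deterministic constant $C_{\ve}$.

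The main obstacle is the bookkeeping of the $\sqrt{\min}$ factor and of the ordering $s\leq\ps$ along the chain: Corollary \ref{corol:DavieSheetdd34} may only be invoked with the smaller time in the first slot and carries the weight $\sqrt{(\text{smaller time})}$, so the approximating grid must be arranged to keep every intermediate time at least comparable to $s$, and each neighbour increment must be applied in the correct time order (the antisymmetry $\rho_{nk}(A;B)=-\rho_{nk}(B;A)$ makes the sign immaterial after taking absolute values). Once the grid is set up so that all these weights stay uniformly of order $1/\sqrt{s}$, the counting of neighbouring pairs and the geometric summations are routine, and both the convergence of the union bound and the emergence of the $n+\log^+(1/\delta)$ factor follow automatically.
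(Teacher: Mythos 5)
Your proposal is correct and follows essentially the same route as the paper's proof: the tail bound of Corollary \ref{corol:DavieSheetdd34} specialised to $I_{nk}$, a union bound over a parabolically scaled dyadic grid (time step $4^{-q}$, space step $2^{-q}$) with thresholds growing linearly in $n$ plus the scale, and a chaining/telescoping argument in which the time coordinates are approximated from above so that every weight $\sqrt{\text{(smaller time)}}$ stays bounded below by $\sqrt{s}$. The only cosmetic difference is that the paper runs the union bound over \emph{all} ordered pairs of grid points at each scale (at most $2^{4d}2^{6dm}$ of them, so the coarse-scale term $\rho_{nk}(s_m,x_m;\ps_m,\xp_m)$ is covered directly), whereas you restrict to neighbouring pairs and compensate with $O(1)$ extra neighbour steps at the coarse level — the same mechanism with slightly leaner bookkeeping.
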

	\begin{proof}
		Denote by $\mathcal{Q}$ be the set of dyadic quadruples  $(s,\ps,x,\xp)\in]0,1]^2\times[-1,1]^{2d}$ such that $s\leq\ps$ and $(s,x)\neq(\ps,\xp)$.  Let us define 
		$$
		\mathcal{Q}_{m}=\left\{(s,\ps,x,\xp)\in]0,1]^2\times[-1,1]^{2d}:\,s\leq\ps,\,(\ps,\xp)\neq (s,x),\,4^m(s,\ps)\in\N^2\text{ and }2^m(x,\xp)\in\Z^{2d}\right\}.
		$$ 
		Observe that $\mathcal{Q}_{m}$ does not have more than $2^{4d}2^{6dm}$ elements and it holds $\mathcal{Q}=\bigcup\limits_{m\in\N}\mathcal{Q}_{m}$.	
		For every $n\in\N$, $\delta\in\Q_+$, consider the sets
		\begin{align*}
			\mathbf{E}_{\delta,n}=&\left\{\omega\in\Omega:\,\text{there exist }k\in\{0,1,\cdots,2^n-1\},\text{ }m\in\N^{\ast},\text{ } \text{ and }(s,\ps,x,\xp)\in\mathcal{Q}_{m}{}_{}^{}\right.\\
			&\quad \text{ such that }
			\sqrt{s}|\rho_{nk}(s,x;\ps,\xp)|(\omega)\geq\delta(1+n+m)2^{-n/2}(|\xp-x|+\sqrt{\ps-s})\}.
		\end{align*}
		Then
		\begin{align*}
			\mathbf{E}_{\delta,n}=
			\bigcup_{k=0}^{2^n-1}\bigcup_{m=1}^{\infty}\Big(\bigcup_{(s,\ps,x,\xp)\in\mathcal{Q}_{m}}\left\{\omega\in\Omega:\,\sqrt{s}|\rho_{nk}(s,x;\ps,\xp)|(\omega)\geq\delta(1+n+m)2^{-n/2}(|\xp-x|+\sqrt{\ps-s})\right\}\Big).
		\end{align*}	 
		Let  $\mathbf{E}_{\delta}$ be defined as
		$$\mathbf{E}_{\delta}:=\bigcup\limits_{n=0}^{\infty}\mathbf{E}_{\delta,n}.$$
		We deduce from \eqref{eq:EstDavieSigma1dd1} that 
		\begin{align*}
			\Pb(\mathbf{E}_{\delta})&\leq\sum\limits_{n=0}^{\infty}\sum\limits_{k=0}^{2^n-1}\sum\limits_{m=0}^{\infty} \sum\limits_{(s,\ps,x,\xp)\in\mathcal{Q}_{m}}\Pb\Big(\sqrt{s}|\rho_{nk}(s,x;\ps,\xp)|\geq \delta(1+n+m)2^{-n/2}(|\xp-x|+\sqrt{\ps-s})\Big)\\
			&\leq C2^{4d} \sum\limits_{n=0}^{\infty}\sum\limits_{m=0}^{\infty}2^{n}2^{6dm}e^{-\alpha\delta(1+n+m)}.
		\end{align*}
		In particular for $\delta\geq\delta_0:=\alpha^{-1}(6d+1)$, we have
		\begin{align}\label{eq:EstlimPEdelta0e}
			\lim\limits_{\delta\to\infty}\Pb(\mathbf{E}_{\delta})\leq\lim\limits_{\delta\to\infty} C2^{4d}e^{-\alpha\delta}\,\sum\limits_{n=0}^{\infty}\sum\limits_{m=0}^{\infty}2^{-6dn-m}\leq \lim\limits_{\delta\to\infty}4C2^{4d}e^{-\alpha\delta}=0.
		\end{align}
		Hence for every $\ve>0$, there exists $\delta_{\ve}\in\Q_+$ such that $\Pb(\mathbf{E}_{\delta_{\ve}})<\ve/2$. Choose $\Omega_{\ve}=\Omega\setminus\mathbf{E}_{\delta_{\ve}}$.
		Then $\Pb(\Omega_{\ve})=1-\ve/2$ and for every $\omega\in\Omega_{\ve}$,  
		\begin{align}\label{eq:DaviePseudoMetric00}
			|\rho_{nk}(s,x;\ps,\xp)(\omega)|<\frac{\delta_{\ve} (1+n+m)2^{-n/2}}{\sqrt{s}}(|\xp-x|+\sqrt{\ps-s})
		\end{align}
		for all choices of $n$, $k$, $m$ and  $(s,\ps,x,\xp)\in\mathcal{Q}_{m}$.
		
		Now, choose any quadruple $(s,\ps,x,\xp)\in\mathcal{Q}$ and let $m$ be the smallest nonnegative integer $m$ such that $2^{-m-1}\leq |\xp-x|+\sqrt{\ps-s}$. For $r\geq m$ and for every $i\in\{1,\ldots,d\}$, define the sequences 
		\begin{align*}
			s_r=1-4^{-r}[4^r(1-s)],\,\ps_r=1-4^{-r}[4^r(1-\ps)],\,x_{i,r}=2^{-r}[2^rx_i]\text{ and }\xp_{i,r}=2^{-r}[2^r\xp_i],
		\end{align*}
		where $[\cdot]$ denotes the integer part function. Observe that for every $(\alpha,\beta)\in\R^2$, we have $|[\alpha]-[\beta]|\leq1+|\alpha-\beta|$, $0\leq[4\alpha]-4[\alpha]\leq3$ and $0\leq[2\alpha]-2[\alpha]\leq1$, then it holds
		\begin{align*}
			|s_m-\ps_m|\leq 2\times4^{-m},\text{ } |\xp_m-x_m|\leq\sqrt{d}\,2^{1-m},
		\end{align*}
		and for every $r\geq m$, we obtain
		\begin{align*}
			|s_{r+1}-s_r|\leq3\times4^{-r-1},\,|\ps_{r+1}-\ps_r|\leq3\times4^{-r-1},\,|x_{r+1}-x_r|\leq\sqrt{d}\,2^{-r-1}\text{ and }|\xp_{r+1}-\xp_r|\leq\sqrt{d}\,2^{-r-1}.
		\end{align*}	
		It follows from the definition of $\rho_{nk}$ that 
		\begin{align}\label{eq:AddPropRho}
			\rho_{nk}(s,x;\ps,\xp)=\rho_{nk}(s,x;s_m,x_m)+\rho_{nk}(s_m,x_m;\ps_m,\xp_m)+\rho_{nk}(\ps_m,\xp_m;\ps,\xp).
		\end{align}
		Moreover for every integer $q\geq m+1$, we have
		\begin{align*}
			\rho_{nk}(\ps_{q+1},\xp_{q+1};\ps_m,\xp_m)=\sum\limits_{r=m}^{q}\rho_{nk}(\ps_{r+1},\xp_{r+1};\ps_{r},\xp_r)\,\text{ and }\\\rho_{nk}(s_m,x_m;s_{q+1},x_{q+1})=\sum\limits_{r=m}^{q}\rho_{nk}(s_{r},x_r;s_{r+1},x_{r+1}),
		\end{align*}
		
		from which we deduce that
		\begin{align*}
			\rho_{nk}(\ps,\xp;\ps_m,\xp_m)=\sum\limits_{r=m}^{\infty}\rho_{nk}(\ps_r,\xp_r;\ps_{r+1},\xp_{r+1})\,\text{ and }\rho_{nk}(s_m,x_m;s,x)=\sum\limits_{r=m}^{\infty}\rho_{nk}(s_{r+1},x_{r+1};s_{r},x_r).
		\end{align*}
		The above equalities come from the fact that for some integer $q\geq m+1$, we have $\ps_r=\ps$, $s_r=s$, $x_r=x$ and $\xp_r=\xp$ for all $r\geq q$. Using  \eqref{eq:DaviePseudoMetric00}, \eqref{eq:AddPropRho} and the fact that $\ps_{r}\geq s_{r}\geq s$ for $r\geq m$, we obtain that for every $\omega\in\Omega_{\ve}$,
		\begin{align*}
			&2^{n/2}\,|\rho_{nk}(s,x;\ps,\xp)(\omega)|\\&\leq2^{n/2}\left(|\rho_{nk}(s_m,x_m;\ps_m,\xp_m)(\omega)|+\sum\limits_{r=m}^{\infty}|\rho_{nk}(s_{r+1},x_{r+1};s_{r},x_r)(\omega)|+\sum\limits_{r=m}^{\infty}|\rho_{nk}(\ps_r,\xp_r;\ps_{r+1},\xp_{r+1})(\omega)|\right)\\
			&\leq4\sqrt{d}\delta_{\ve}\frac{(1+n+m)}{\sqrt{s_m}}2^{-m}+4\sqrt{d}\delta_{\ve}\sum\limits_{r=m}^{\infty}\frac{(2+n+r)}{\sqrt{s_{r+1}}}2^{-r-1}+4\sqrt{d}\delta_{\ve}\sum\limits_{r=m}^{\infty}\frac{(2+n+r)}{\sqrt{\ps_{r+1}}}2^{-r-1}\\
			&\leq\frac{4\sqrt{d}\delta_{\ve}(1+n+m)}{\sqrt{s}}2^{-m}+\frac{8\sqrt{d}\delta_{\ve}(n+1)}{\sqrt{s}}\sum\limits_{r=m}^{\infty}2^{-r-1}+\frac{8\sqrt{d}\delta_{\ve}}{\sqrt{s}}\sum\limits_{r=m}^{\infty}(r+1)2^{-r-1}.
		\end{align*} 
		Using the facts that $\sum\limits_{r=m+1}^{\infty}2^{-r}= 2^{-m}$, $\sum\limits_{r=m+1}^{\infty}r2^{1-r}= (2+m)2^{1-m}$ and  $2^{-m-1}\leq|\xp-x|+\sqrt{\ps-s}<2^{-m}$,
		we obtain
		\begin{align*}
			2^{n/2}\,|\rho_{nk}(\ps,\xp;s,x)|\leq& \frac{36\sqrt{d}\delta_{\ve}(n+m+1)}{\sqrt{s}}2^{-m}\\ \leq &\frac{108d\delta_{\ve}}{\sqrt{s}} \Big[1+n+\log^+\frac{1}{|\xp-x|+\sqrt{\ps-s}}\Big]\Big(|\xp-x|+\sqrt{\ps-s}\Big). 
		\end{align*} 
		The result follows by taking $C_{\ve}=108d\delta_{\ve}$.
	\end{proof}		
	Subsequently, $\mathbf{1}_U$ denotes the indicator function of a given Borel set $U$ and $|U|$ denotes the Lebesgue measure of $U$, when there is no confusion.
	
	\begin{lemma}\label{lem:IntIndicFuncEstim}
		For all $\ve>0$, there exists $\eta_{\ve}>0$ such that, if $U\subset(0,1)\times\R^d$ is open and satisfies $|U|<\eta_{\ve}$, then
		\begin{align*}
			\Pb\Big(\Big\{\sqrt{s}\int_0^1\mathbf{1}_{U}(t,W_{s,t}+x)dt\leq\ve,\,\forall\,(s,x)\in]0,1]\times[-1,1]^d\Big\}\Big)\geq1-\ve.
		\end{align*}
	\end{lemma}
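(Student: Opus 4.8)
The plan is to combine a first-moment estimate (which supplies the smallness coming from $|U|$) with the modulus-of-continuity estimate already available for bounded drifts (which is independent of $|U|$), gluing the two through lower semicontinuity. Throughout write $b=\mathbf 1_U$, so that $\|b\|_\infty\le1$ and $b$ is admissible for Corollary~\ref{corol:DavieSheetdd34} and Lemma~\ref{lem:PseudoMetric1}, and set
\[
G(s,x)=\int_0^1\mathbf 1_U(t,W_{s,t}+x)\,\mathrm dt,\qquad F(s,x)=\sqrt s\,G(s,x),
\]
so that $0\le G\le1$ and $F\le\sqrt s$. The last bound already disposes of small times: if $s\le\ve^2$ then $F(s,x)\le\ve$ for every $x$, so it suffices to control $\sup F$ over $(s,x)\in[\ve^2,1]\times[-1,1]^d$. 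Since $U$ is open and the sheet paths are continuous, $(s,x)\mapsto\mathbf 1_U(t,W_{s,t}(\omega)+x)$ is lower semicontinuous for each fixed $t$, hence by Fatou's lemma $(s,x)\mapsto F(s,x)$ is lower semicontinuous; consequently the supremum of $F$ over this region equals its supremum over the dyadic points of the region, and it is enough to bound $F$ at dyadic $(s,x)$.

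First I would bound the first moment uniformly. For fixed $s$ the process $(W_{s,t})_t$ is centred Gaussian with $\E[(W^{(i)}_{s,t})^2]=st$, so writing $U_t=\{z:(t,z)\in U\}$ and bounding the density by $(2\pi st)^{-d/2}$,
\[
\E[F(s,x)]=\sqrt s\int_0^1\Pb(x+W_{s,t}\in U_t)\,\mathrm dt\le\sqrt s\int_0^1\min\!\big(1,\,|U_t|\,(2\pi st)^{-d/2}\big)\,\mathrm dt.
\]
Using $\min(1,a)\le a^\theta$ for $\theta\in(0,1)$ and then Hölder's inequality, for $\theta<1/d$ small enough that $\tfrac{\theta d}{2(1-\theta)}<1$ one obtains
\[
\E[F(s,x)]\le (2\pi)^{-\theta d/2}\,s^{(1-\theta d)/2}\int_0^1|U_t|^{\theta}\,t^{-\theta d/2}\,\mathrm dt\le C_d\,s^{(1-\theta d)/2}|U|^{\theta}\le C_d|U|^{\theta},
\]
the last step using $s\le1$ and $\theta<1/d$. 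The factor $\sqrt s$ is exactly what makes the exponent of $s$ nonnegative, so the bound is uniform in $(s,x)$. (Feeding the same computation into Kac's moment formula would upgrade this to $\E[F^p]\le p!\,(C_d|U|^{\theta})^p$ and an exponential tail, but the first moment suffices below because the skeleton will be finite.)

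Next I would import the modulus of continuity and run a skeleton argument. Applying Lemma~\ref{lem:PseudoMetric1} to $b=\mathbf 1_U$ with $n=1$ and summing over the two halves $I_{10},I_{11}$ of $[0,1]$, where $\rho_{10}+\rho_{11}=G(\ps,\xp)-G(s,x)$, yields a deterministic $C_{\ve}$ and a set $\Omega_{\ve}$ with $\Pb(\Omega_{\ve})\ge1-\ve/2$ on which, for all dyadic $(s,x),(\ps,\xp)$ with $s\le\ps$,
\[
|G(\ps,\xp)-G(s,x)|\le\frac{C_{\ve}}{\sqrt s}\Big[1+\log^+\tfrac1r\Big]r,\qquad r:=|\xp-x|+\sqrt{\ps-s}.
\]
Multiplying by $\sqrt s$ cancels the singular prefactor, and with $G\le1$, $|\sqrt\ps-\sqrt s|\le r$ this gives the $|U|$-free, $s$-uniform increment bound $|F(\ps,\xp)-F(s,x)|\le\psi(r):=r+C_{\ve}[1+\log^+\tfrac1r]r$ on $\Omega_{\ve}$. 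Since $\psi(r)\to0$ as $r\to0$, fix a dyadic scale $h=2^{-N}$ (depending only on $\ve,d$) with $\psi(2\sqrt h)\le\ve/2$, and let $\mathcal G$ be the finite set of dyadic points of $[\ve^2,1]\times[-1,1]^d$ at scale $h$, of cardinality $N(\ve)$. Each dyadic $(s,x)$ in the region lies within $r\le2\sqrt h$ of some point of $\mathcal G$, so on $\Omega_{\ve}$ one has $F(s,x)\le\max_{\mathcal G}F+\ve/2$. Markov's inequality and a union bound over the finite grid give $\Pb(\max_{\mathcal G}F>\ve/2)\le N(\ve)\,2C_d|U|^{\theta}/\ve$, which is $<\ve/2$ as soon as $|U|<\eta_{\ve}:=\big(\ve^2/(4C_dN(\ve))\big)^{1/\theta}$. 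On $\Omega_{\ve}\cap\{\max_{\mathcal G}F\le\ve/2\}$, of probability $\ge1-\ve$, the lower-semicontinuity reduction then yields $\sup_{(s,x)}\sqrt s\,G(s,x)\le\ve$, which is the assertion.

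The main obstacle is precisely the passage from pointwise to uniform control. A naive union bound over a dense set diverges, and trying to absorb the spatial and temporal displacements by fattening $U$ fails, since the measure of a fattened open set is not controlled by $|U|$. The resolution is to let $|U|$ enter only through the finitely many reference points of $\mathcal G$ (where the first moment is small) and to control the oscillation between them by the drift-independent modulus estimate of Lemma~\ref{lem:PseudoMetric1}. The elementary bound $F\le\sqrt s$ removes the singular small-time regime and is what renders the reference grid finite, while the $\sqrt s$ weight is exactly what keeps both the first moment and the modulus uniform down to the cutoff.
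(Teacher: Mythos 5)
Your proof is correct, and although it shares the paper's core skeleton---smallness at a finite grid of points coming from $|U|$ via a Gaussian-density/Markov estimate, glued to the $|U|$-independent modulus of continuity supplied by Lemma~\ref{lem:PseudoMetric1}---it departs from the paper's argument in three substantive ways. First, you apply Lemma~\ref{lem:PseudoMetric1} directly to $b=\mathbf{1}_U$ and work exclusively at dyadic points, extending the final bound to all $(s,x)$ by lower semicontinuity of $(s,x)\mapsto\int_0^1\mathbf{1}_U(t,W_{s,t}+x)\,\mathrm{d}t$ (openness of $U$ plus Fatou); the paper instead approximates $\mathbf{1}_U$ from below by a nondecreasing sequence of continuous functions $\varphi_r$, runs the whole argument for each $\varphi_r$ (continuity of $\varphi_r$ being what justifies its telescoping from an arbitrary $(s,x)$ to the grid through dyadic approximants), and then passes to the limit by Beppo--Levi together with a monotonicity argument for the events $D_r$. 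Second, your grid comparison is one-step at a single scale $h$, which is legitimate because Lemma~\ref{lem:PseudoMetric1} bounds arbitrary dyadic pairs and because the split $F(\ps,\xp)-F(s,x)=\sqrt{s}\,\bigl(G(\ps,\xp)-G(s,x)\bigr)+(\sqrt{\ps}-\sqrt{s})\,G(\ps,\xp)$ with $|\sqrt{\ps}-\sqrt{s}|\le\sqrt{\ps-s}$ cancels the $1/\sqrt{s}$ singularity; the paper instead chains over all scales $n\ge m$ using the summable bound $K_{\ve}n2^{-n/6}$, and your explicit cutoff $F\le\sqrt{s}\le\ve$ for $s\le\ve^2$ is the device that keeps your grid finite in place of that chaining. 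Third, your smallness input is the first-moment bound $\E[F]\le C_d|U|^{\theta}$ obtained from $\min(1,a)\le a^{\theta}$ and H\"older, whereas the paper controls the same expectation by H\"older against the $L^{2d}$ norm, using $\Vert\mathbf{1}_U\Vert_{L^{2d}}\le|U|^{1/(2d)}$---essentially your computation with $\theta=1/(2d)$. What your route buys is economy: no approximation layer, no multiscale chaining, and measurability of the uncountable supremum comes for free from semicontinuity (a point the paper leaves implicit); what the paper's route buys is uniformity of method, since the continuous-approximation and dyadic-limit machinery it deploys here is of the same kind it needs again (via Lemma~\ref{lem:NoiseRegular}) to pass from dyadic to general points in the proof of Theorem~\ref{theo:Pseudometric}.
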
	
	\begin{proof}
		For every $n\in\N^{\ast}$, let
		\begin{align*}
			\mathcal{O}_{n}=\{(s,\ps,x,\xp)\in\mathcal{Q}:\,|\xp-x|+\sqrt{\ps-s}\leq\sqrt{d}\,2^{1-n}\}.
		\end{align*}
		It follows from Lemma \ref{lem:PseudoMetric1} that for every $\ve>0$, there exists a deterministic constant $C_{\ve}$ such that for any Borel measurable real valued function $\varphi$ on $[0,1]\times\R^d$ satisfying $|\varphi|\leq1$, we can find $\Omega_{\ve}$ with $\Pb(\Omega_{\ve})\geq1-\ve/2$ and 
		\begin{align*}
			\sqrt{s}|\rho^{(\varphi)}_{nk}(s,x;\ps,\xp)(\omega)|=&\sqrt{s}\int_{I_{nk}}\left\{\varphi(t,\xp+W_{\ps,t})-\varphi(t,x+W_{s,t})\right\}\mathrm{d}t\\
			\le&C_{\ve}2^{-n/2}\Big(n+\log^{+}\dfrac{1}{|\xp-x|+\sqrt{\ps-s}}\Big)(|\xp-x|+\sqrt{\ps-s})\\
			\le& 2\sqrt{d}\,C_{\ve}\Big(1+\log^+\frac{1}{|\xp-x|+\sqrt{\ps-s}}\Big)(|\xp-x|+\sqrt{\ps-s})^{1/3}n2^{-7n/6}\leq K_{\ve}n2^{-7n/6}
		\end{align*}
		for every $\omega\in\Omega_{\ve}$, every $(n,k)\in\N^2$, $0\leq k\leq2^n-1$ and every $(s,\ps,x,\xp)\in\mathcal{O}_n$,  
		where $K_{\ve}=2\sqrt{d}\,C_{\ve}\sup\limits_{\zeta\in]0,1]}\left[1+\log^+(1/\zeta)\right]\zeta^{1/3}.$
		Now, take $m$ such that $K_{\ve}\sum\limits_{k=m}^{\infty}n2^{-n/6}<\ve/2$. Let us consider the set of dyadic numbers 
		$$\mathcal{J}_m=\left\{k4^{-m}:\,k=0,1,\ldots,4^{m}\right\}\times\left\{-1+\ell2^{-m}:\,\ell=0,1,2,\cdots,2^{m+1}\right\}^d.$$ 
		Let $\delta$ be small enough. Then for any bounded Borel measurable function $\varphi:\,[0,1]\times\R^d\to\R$ such that $\Vert \varphi\Vert_{L^{2d}([0,1]\times\R^d)}<\delta$, it holds that 
		\begin{align*}
			\Pb\Big(\Big|\int_{I_{mk}}\sqrt{s}\,\varphi(t,x+W_{s,t})\mathrm{d}t\Big|\geq\frac{\ve}{2^{m+2}}\Big)\leq\frac{\ve}{2^{m+1}\#(\mathcal{J}_m)}
		\end{align*}
		for some $k$ and $(s,x)\in\mathcal{J}_m$, where $\#(\mathcal{J}_m)$ denotes the number of elements in $\mathcal{J}_m$. Indeed, if we set $g(s,t,y)=(2\pi st)^{-d/2}e^{-|y|^2/2st}$, then by Markov inequality and H\"older inequality, we have
		\begin{align*}
			&\Pb\Big(\Big|\int_{I_{mk}}\sqrt{s}\,\varphi(t,x+W_{s,t})\mathrm{d}t\Big|
			\geq\frac{\ve}{2^{m+2}}\Big)\\ \leq&
			(2^{m+2}/\ve)\E\Big[\Big|\int_{I_{mk}}\sqrt{s}\,\varphi(t,x+W_{s,t})\mathrm{d}t\Big|\Big]\\
			\leq&(2^{m+2}\sqrt{s}/\ve)\int_0^1\int_{\R^d}|\varphi(t,x+y)|g(s,t,y)\,\mathrm{d}y\mathrm{d}t\\
			\leq&(2^{m+2}\sqrt{s}/\ve)\Big(\int_0^1\int_{\R^d}|\varphi(t,x+y)|^{2d}\,\mathrm{d}y\mathrm{d}t\Big)^{\frac{1}{2d}}\Big(\int_0^1\int_{\R^d}g(s,t,y)^{\frac{2d}{2d-1}}\,\mathrm{d}y\mathrm{d}t\Big)^{1-\frac{1}{2d}}\\ \leq&(2^{m+2}c(d)/\ve)\|\varphi\|_{L^{2d}([0,1]\times\R^d)}\leq(2^{m+2}c(d)/\ve)\delta,
		\end{align*}
		since $\int_{\R^d}g(s,t,y)^{\frac{2d}{2d-1}}\,\mathrm{d}y=c(d)(st)^{-\frac{d}{2(2d-1)}}$, where $c(d)$ is a constant that depends on $d$.\\
		Hence, it suffices to take $\delta$ such that $\delta<\ve^22^{-m-3}/c(d)\#(\mathcal{J}_m)$ to get the above claim. Therefore, we have
		\begin{align*}
			\Pb\Big(\Big|\int_{I_{mk}}\sqrt{s}\,\varphi(t,x+W_{s,t})\mathrm{d}t\Big|<\frac{\ve}{2^{m+2}},\,\forall\,k\in\{0,1,\cdots,2^m-1\},\,\forall\,(s,x)\in\mathcal{J}_m\Big)
			\leq1-\ve/2.
		\end{align*}
		Let $U\subset[0,1]\times\R^d$ be an open set such that $|U|<\eta:=\delta^{2d}$ and consider a non-decreasing sequence $(\varphi_r)_{r\in\N}$ of continuous nonnegative functions on $[0,1]\times\R^d$ that converges pointwise to $\mathbf{1}_{U}$. Observe that for every $r\in\N$, $\Vert \varphi_r\Vert_{L^{2d}([0,1]\times\R^d)}<\delta$ since $0\leq\varphi_r\leq\mathbf{1}_{U}$. Let $r\in\N$ and define the events $A_r$ and $B_r$ by 
		$$
		A_r:=\Big\{\Big|\int_{I_{mk}}\sqrt{s}\,\varphi_r(t,x+W_{s,t})\mathrm{d}t\Big|<\frac{\ve}{2^{m+2}},\,\forall\,k,\,\forall\,(s,x)\in\mathcal{J}_m\Big\}
		$$
		and
		\begin{align*}
			&B_r:=
			\Big\{\int_{I_{nk}}\sqrt{s}\,\left[\varphi_r(t,\xp+W_{\ps,t})-\varphi_r(t,x+W_{s,t})\right]\mathrm{d}t\leq K_{\ve}n2^{-7n/6},\,\forall\,n\in\N^{\ast},\,\forall\,k,\,\forall\,(s,\ps,x,\xp)\in \mathcal{O}_n\Big\}.
		\end{align*}
		Then $\Pb(A_r)\geq1-\ve/2$, $\Pb(B_r)\geq1-\ve/2$,  $\Pb(A_r\cap B_r)\geq1-\ve$ and for any $\omega\in A_r\cap B_r$,
		\begin{align}\label{eq:Contlemma1}
			\Big|\int_0^1\sqrt{s}\,\varphi_r(t,x+W_{s,t})(\omega)\mathrm{d}t\Big|\leq\sum\limits_{k=0}^{2^m-1}	\Big|\int_{I_{mk}}\sqrt{s}\,\varphi_r(t,x+W_{s,t})(\omega)\mathrm{d}t\Big|<\ve/2,\quad\forall\,(s,x)\in\mathcal{J}_m,
		\end{align}
		and 
		\begin{align}\label{eq:Contlemma2}
			&\Big|\int_0^1\sqrt{s}\,\{\varphi_r(t,\xp+W_{\ps,t})-\varphi_r(t,x+W_{s,t})\}(\omega)\mathrm{d}t\Big|\\&\leq\sum\limits_{k=0}^{2^n-1}	\Big|\int_{I_{nk}}\sqrt{s}\,\{\varphi_r(t,\xp+W_{\ps,t})-\varphi_r(t,x+W_{s,t})\}(\omega)\mathrm{d}t\Big|\leq K_{\ve}n2^{-n/6},\quad\forall\,(s,\ps,x,\xp)\in\mathcal{O}_n.\nonumber
		\end{align}  
		For a fixed $s\in]0,1]$, let $(s_n,n\in\N)$ and $(x_n,n\in\N)$ be given respectively by $s_n=1-4^{-n}[4^n(1-s)]$ and $x_{i,n}=2^{-n}[2^nx_i]$ for every $i\in\{1,\cdots,d\}$ and $n\in\N$. Since $(s_m,x_m)\in\mathcal{J}_m$,  $(s_{n+1},s_{n},x_n,x_{n+1})\in\mathcal{O}_n$ and $s_n\geq s$,
		we deduce from \eqref{eq:Contlemma1} and \eqref{eq:Contlemma2} that for any $\omega\in A_r\cap B_r$,
		\begin{align*}
			&\Big|\sqrt{s}\,\int_0^1\varphi_r(t,x+W_{s,t})(\omega)\mathrm{d}t\Big|\\
			\le&\sqrt{s}\,\Big|\int_0^1\varphi_r(t,x_m+W_{s_m,t})(\omega)\mathrm{d}t\Big|+\sum\limits_{n=m}^{\infty}\sqrt{s}\,\Big|\int_0^1\{\varphi_r(t,x_{n+1}+W_{s_{n+1},t})-\varphi_r(t,x_n+W_{s_n,t})\}(\omega)\mathrm{d}t\Big|\\
			&\leq\sqrt{s_m}\,\Big|\int_0^1\varphi_r(t,x_m+W_{s_m,t})(\omega)\mathrm{d}t\Big|+\sum\limits_{n=m}^{\infty}\sqrt{s_{n+1}}\,\Big|\int_0^1\{\varphi_r(t,x_{n+1}+W_{s_{n+1},t})-\varphi_r(t,x_n+W_{s_n,t})\}(\omega)\mathrm{d}t\Big|\\
			&\leq \ve/2+K_{\ve}\sum\limits_{n=m}^{\infty}n2^{-n/6}\leq\ve/2+\ve/2=\ve.
		\end{align*}
		Define the set $D_r$ by
		\begin{align*}
			D_r:=\Big\{\omega\in\Omega:\,\sqrt{s}\int_0^1\varphi_r(t,x+W_{s,t})(\omega)dt\leq\ve,\,\forall\,(s,x)\in]0,1]\times[-1,1]^d\Big\}.
		\end{align*}
		Then, one sees that $A_r\cap B_r\subset D_r$ and thus $\Pb(D_r)\geq1-\ve$. We set $D=\bigcap_{r\in\N}D_r$. Since $(\varphi_r)_{r\in\N}$ is non-decreasing, $(D_r)_{r\in\N}$ is non-increasing and as a consequence, $\Pb(D)\geq1-\ve$. Moreover, by the Beppo-Levy theorem 
		\begin{align*}
			\lim\limits_{r\to\infty}\sqrt{s}\int_0^1\varphi_r(t,x+W_{s,t})\mathrm{d}t=\sqrt{s}\int_0^1\mathbf{1}_U(t,x+W_{s,t})\mathrm{d}t
		\end{align*}
		and then
		\begin{align*}
			D\subset\Big\{\omega\in\Omega:\,\sqrt{s}\int_0^1\mathbf{1}_U(t,x+W_{s,t})\mathrm{d}t\leq\ve,\,\forall\,(s,x)\in]0,1]\times[-1,1]^d\Big\},
		\end{align*}
		which yields the desired result.
	\end{proof}
	
	The next result shows that the two-parameter Wiener process regularises the averaging operator $T^W_{[0,1]}[b]$ for any bounded Borel measurable function $b$.  
	\begin{lemma}\label{lem:NoiseRegular}
		Let $b$ satisfy conditions of Corollary \ref{corol:DavieSheetdd34} and let $(s_n,x_n)_{n\in\N}$ be a sequence in $]0,1]\times[-1,1]^d$ that converges to $(s,x)$, where $s>0$. Then 
		\begin{align}\label{eq:NoiseRegular}
			\lim\limits_{n\to\infty}\int_0^1b(t,x_n+W_{s_n,t})\mathrm{d}t=\int_0^1b(t,x+W_{s,t})\mathrm{d}t,\quad\Pb\text{-a.s.}
		\end{align}
	\end{lemma}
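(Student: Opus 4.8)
The plan is to reduce the convergence to the case of a continuous integrand by approximating $b$ and controlling the resulting error with the uniform bad-set estimate of Lemma \ref{lem:IntIndicFuncEstim}. Throughout I would work on the full-measure event on which $(u,v)\mapsto W_{u,v}(\omega)$ is continuous, so that $W_{s_n,t}(\omega)\to W_{s,t}(\omega)$ for every $t\in[0,1]$. Note that the limit point satisfies $(s,x)\in\,]0,1]\times[-1,1]^d$, since $s>0$, $s_n\le1$ and $[-1,1]^d$ is closed, so the estimates below apply at $(s,x)$ as well as at each $(s_n,x_n)$.

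Fix $\ve>0$ and let $\eta_\ve$ be given by Lemma \ref{lem:IntIndicFuncEstim}. First I would localise in space: choose $R=R_\ve>1$ so large that $A_R:=\{\sup_{(u,v)\in[0,1]^2}|W_{u,v}|<R-1\}$ has $\Pb(A_R)>1-\ve$; on $A_R$ one has $x'+W_{u,v}\in(-R,R)^d$ for all $(u,v)\in[0,1]^2$ and all $x'\in[-1,1]^d$. The truncation $b_R:=b\,\mathbf{1}_{[0,1]\times[-R,R]^d}$ satisfies $|b_R|\le1$ and $|\{b_R\ne0\}|<\infty$, so Lusin's theorem provides a continuous $\phi$ with $|\phi|\le1$ agreeing with $b_R$ outside an open set $U\subset(0,1)\times\R^d$ with $|U|<\eta_\ve$; in particular $|b-\phi|\le 2\,\mathbf{1}_U$ on $[0,1]\times(-R,R)^d$. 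Setting $G_\ve=A_R\cap\big\{\sqrt{s}\int_0^1\mathbf{1}_U(t,W_{s,t}+x)\,dt\le\ve \text{ for all }(s,x)\in\,]0,1]\times[-1,1]^d\big\}$, Lemma \ref{lem:IntIndicFuncEstim} gives $\Pb(G_\ve)\ge1-2\ve$.

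On $G_\ve$ I would split $D_n:=\int_0^1 b(t,x_n+W_{s_n,t})\,dt-\int_0^1 b(t,x+W_{s,t})\,dt$ as
\begin{align*}
D_n&=\int_0^1 (b-\phi)(t,x_n+W_{s_n,t})\,dt\\
&\quad+\Big(\int_0^1\phi(t,x_n+W_{s_n,t})\,dt-\int_0^1\phi(t,x+W_{s,t})\,dt\Big)\\
&\quad+\int_0^1 (\phi-b)(t,x+W_{s,t})\,dt.
\end{align*}
The middle term tends to $0$ as $n\to\infty$ by dominated convergence, using the continuity of $\phi$ and of the sheet paths together with $|\phi|\le1$. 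For the outer two terms, on $A_R$ the arguments $x_n+W_{s_n,t}$ and $x+W_{s,t}$ lie in $(-R,R)^d$, so the integrands are bounded by $2\,\mathbf{1}_U$, and the defining inequality of $G_\ve$ yields $\big|\int_0^1 (b-\phi)(t,x_n+W_{s_n,t})\,dt\big|\le 2\ve/\sqrt{s_n}$ and $\big|\int_0^1 (\phi-b)(t,x+W_{s,t})\,dt\big|\le 2\ve/\sqrt{s}$. Since $s_n\to s>0$, letting $n\to\infty$ gives $\limsup_n|D_n|\le 4\ve/\sqrt{s}$ on $G_\ve$.

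Finally I would upgrade this to an almost sure statement. Taking $\ve=2^{-j}$ produces events $G_j$ with $\Pb(G_j^c)\le 2^{1-j}$ on which $\limsup_n|D_n|\le 4\cdot2^{-j}/\sqrt{s}$. As $\sum_j\Pb(G_j^c)<\infty$, Borel--Cantelli shows that almost every $\omega$ belongs to $G_j$ for all large $j$; for such $\omega$ the bound $4\cdot2^{-j}/\sqrt{s}$ holds for all large $j$, forcing $\limsup_n|D_n|=0$, which is exactly \eqref{eq:NoiseRegular}. The main obstacle is organising the approximation correctly: because $b$ is merely bounded and measurable on the unbounded domain $[0,1]\times\R^d$, Lusin's theorem is not directly applicable, and it is the spatial truncation controlled by the path supremum on $A_R$ that renders the discrepancy set of finite measure, so that Lemma \ref{lem:IntIndicFuncEstim} can be invoked uniformly in $n$.
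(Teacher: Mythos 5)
Your proof is correct and follows essentially the same route as the paper's: a Lusin approximation of $b$ by a bounded continuous function, control of the exceptional set uniformly in $(s,x)$ via Lemma \ref{lem:IntIndicFuncEstim}, dominated convergence along the continuous sheet paths for the regular part, and a Borel--Cantelli argument over $\varepsilon_j=2^{-j}$ to pass from high-probability events to an almost sure statement. The only substantive difference is your preliminary spatial truncation on the event $A_R$, which reduces Lusin's theorem to its standard form for functions vanishing outside a set of finite measure; the paper applies Lusin directly on the infinite-measure domain $[0,1]\times\R^d$ without comment, so your extra step makes rigorous a point the paper glosses over, while otherwise the two arguments coincide.
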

	\begin{remark}
		Notice that the above result which is key to prove the regularisation by noise is not valid if one replaces the Brownian sheet by any two dimensional continuous function. For example, consider the function $w:[0,1]^2\to\R$ defined by
			\begin{align*}
				w(s,t)=\left\{
				\begin{array}{cl}
					1-\exp\Big(-\dfrac{1}{t(1-s)}\Big)&\text{if }(s,t)\in[0,1[\times]0,1],\\
					1&\text{otherwise}.
				\end{array}
				\right.
			\end{align*}
			Then $w$ is continuous. Let $b$ be the usual integer part function on $[0,1]$ and consider the sequence $(s_n,n\in\N)$ defined by $s_n=1-1/n$. Then $\lim\limits_{n\to\infty}s_n=1$ and
			\begin{align*}
				\lim\limits_{n\to\infty}\int_0^1b(w(s_n,t))\mathrm{d}t=0\neq1=\int_0^1b(w(1,t))\mathrm{d}t.
			\end{align*}
	\end{remark}
	\begin{proof}[Proof of Lemma \ref{lem:NoiseRegular}.]

		For every $r\in\N$, set $\ve_r=2^{-r}$ and consider the corresponding $\eta_r:=\eta_{\ve_r}$ of Lemma \ref{lem:IntIndicFuncEstim}. By Lusin's theorem applied to each $r\in\N$, we can find a function $b_r\in\mathcal{C}_b([0,1]\times\R^d)$ and an open set $U_r\subset[0,1]\times\R^d$ such that
		\begin{align*}
			\Vert b_r\Vert_{\infty}\leq1,\quad|U_r|\leq\eta_r,\quad b_r(t,x)=b(t,x)\quad\text{for all }(t,x)\notin U_r.
		\end{align*}
		By Lemma \ref{lem:IntIndicFuncEstim}, there exists a subset $\Omega_r$ of $\Omega$ with $\Pb(\Omega_r)\geq1-\ve_r$ such that for any $(s,x)\in]0,1]\times[-1,1]^d$,
		\begin{align*}
			\int_0^1\mathbf{1}_{U_r}(t,x+W_{s,t})\mathrm{d}t\leq\frac{\ve_r}{\sqrt{s}},\quad\text{on }\Omega_r.
		\end{align*}	
		
		Next, observe that for any $r\in\N$,
		\begin{align*}
			\int_0^1b(t,x+W_{s,t})\mathrm{d}t= \int_0^1b\mathbf{1}_{U_r}(t,x+W_{s,t})\mathrm{d}t+\int_0^1b\mathbf{1}_{U^c_r}(t,x+W_{s,t})\mathrm{d}t
		\end{align*}
		and
		\begin{align*}
			\int_0^1b_r(t,x+W_{s,t})\mathrm{d}t= \int_0^1b_r\mathbf{1}_{U_r}(t,x+W_{s,t})\mathrm{d}t+\int_0^1b_r\mathbf{1}_{U^c_r}(t,x+W_{s,t})\mathrm{d}t,
		\end{align*}
		where $U_r^c=([0,1]\times\R^d)\setminus U_r$. Hence, we have
		\begin{align}
			\int_0^1b_r(t,x+W_{s,t})\mathrm{d}t-
			2\int_0^1\mathbf{1}_{U_r}(t,x+W_{s,t})\mathrm{d}t 
			&\leq\int_0^1b(t,x+W_{s,t})\mathrm{d}t\nonumber\\
			&\leq\int_0^1b_r(t,x+W_{s,t})\mathrm{d}t+
			2\int_0^1\mathbf{1}_{U_r}(t,x+W_{s,t})\mathrm{d}t.\label{eq:IneqlemRegula}
		\end{align}
		Now, let $(s_n,x_n)_{n\in\N}$ be a sequence in $[0,1]\times[-1,1]^d$ that converges to $(s,x)$. We deduce from \eqref{eq:IneqlemRegula} that, on $\Omega_r$,
		\begin{align*}
			\int_0^1b(t,x+W_{s,t})\mathrm{d}t-\frac{4\ve_r}{\sqrt{s}}
			\leq\int_0^1b_r(t,x+W_{s,t})\mathrm{d}t-\frac{2\ve_r}{\sqrt{s}}\leq\liminf\limits_{n\to+\infty} \int_0^1b(t,x_n+W_{s_n,t})\mathrm{d}t
		\end{align*}
		and
		\begin{align*}
			\int_0^1b(t,x+W_{s,t})\mathrm{d}t+\frac{4\ve_r}{\sqrt{s}}
			\geq \int_0^1b_r(t,x+W_{s,t})\mathrm{d}t+\frac{2\ve_r}{\sqrt{s}}\geq\limsup\limits_{n\to+\infty} \int_0^1b(t,x_n+W_{s_n,t})\mathrm{d}t.
		\end{align*}	
		Define
		\begin{align*}
			\Omega_{\infty}:=\liminf\limits_{r\to\infty}\Omega_r=\bigcup\limits_{q=1}^{\infty}\bigcap\limits_{r=q}^{\infty}\Omega_r.
		\end{align*}
		Then since $\Pb(\Omega_r)\geq1-\ve_r$ and $\sum\limits_{r=1}^{\infty}\ve_r=1$, it follows from the Borel-Cantelli lemma that $\Pb(\Omega_{\infty})=1$. Moreover, on $\Omega_{\infty}$, one has
		\begin{align*}
			\limsup\limits_{n\to+\infty} \int_0^1b(t,x_n+W_{s_n,t})\mathrm{d}t
			\leq \int_0^1b(t,x+W_{s,t})\mathrm{d}t\leq\liminf\limits_{n\to+\infty} \int_0^1b(t,x_n+W_{s_n,t})\mathrm{d}t.
		\end{align*}
		This completes the proof.
	\end{proof}
	\begin{proof}[Proof of Theorem \ref{theo:Pseudometric}.]
		We deduce from Lemma \ref{lem:PseudoMetric1} that for every $r\in\N$, there exist a positive deterministic constant $C_{r}$ such that for any Borel measurable function $b$ satisfying conditions of Corollary \ref{corol:DavieSheetdd34}, one can find $\mathcal{N}_{r}\subset\Omega$ with $\Pb(\mathcal{N}_r)<2^{-r-1}$ and  
		\begin{align*}
			\rho_{nk}(s,x;\ps,\xp)(\omega)\leq \frac{C_r2^{-n/2}}{\sqrt{s}}\Big[n+ \log^+\frac{1}{|\xp-x|+\sqrt{\ps-s}}\Big]\Big(|\xp-x|+\sqrt{\ps-s}\Big),
		\end{align*}
		for every $\omega\in\Omega\setminus\mathcal{N}_{r}$, every 
		$(n,k)\in\N^2$, $0\leq k\leq2^n-1$ and every  $(s,\ps,x,\xp)\in\mathcal{Q}$.	
		We define
		\begin{align*}
			\mathcal{N}_{\infty}=\limsup\limits_{r\to\infty}\mathcal{N}_r=\bigcap\limits_{r=1}^{\infty}\bigcup\limits_{\ell=r}^{\infty}\mathcal{N}_{\ell}.
		\end{align*}
		Since $\sum\limits_{r=0}^{\infty}\Pb(\mathcal{N}_r)<1$, then $\Pb(\mathcal{N}_{\infty})=0$ and for any $\omega\in\Omega_2=\Omega\setminus\mathcal{N}_{\infty}$, there exist $r_{\omega}\in\N$ such that $\omega\in\Omega\setminus\mathcal{N}_{\ell}$, for all $\ell\geq r_{\omega}$. In particular for a given Borel measurable function $b$ satisfying conditions of Corollary \ref{corol:DavieSheetdd34},
		\begin{align}\label{eqlemma6}
			\rho_{nk}(s,x;\ps,\xp)(\omega)\leq \frac{C_{r_{\omega}}2^{-n/2}}{\sqrt{s}}\Big[n+ \log^+\frac{1}{|\xp-x|+\sqrt{\ps-s}}\Big]\Big(|\xp-x|+\sqrt{\ps-s}\Big)
		\end{align}
		for every $\omega\in\Omega_2$, every $(s,\ps,x,\xp)\in\mathcal{Q}$ and every $(n,k)\in\N^2$, $0\leq k\leq2^n-1$.
		Now, fix $\omega\in\Omega_2$, $n\in\N$, $k\in\{0,1,\cdots,2^n-1\}$,  $(s,\ps,x,\xp)\in]0,1]^2\times[-1,1]^d$ with $s\leq\ps,\,(s,x)\neq(\ps,\xp)$. Let $(s_q,x_q)_{q\in\N}$ (respectively $(\ps_q,\xp_q,q\in\N)$) be a sequence of dyadic numbers in $[0,1]\times[-1,1]^d$ that converges to $(s,x)$ (respectively $(\ps,\xp)$).  Suppose without loss of generality that for every $q\in\N$, $0<s_q<s_{q+1}$ and $\ps_{q+1}<\ps_q$. Using \eqref{eqlemma6},  we have
		\begin{align*}
			\rho_{nk}(s_q,x_q;\ps_q,\xp_q)(\omega)\leq \frac{C_{r_{\omega}}2^{-n/2}}{\sqrt{s_q}}\Big[n+ \log^+\frac{1}{|\xp_q-x_q|+\sqrt{\ps_q-s_q}}\Big]\Big(|\xp_q-x_q|+\sqrt{\ps_q-s_q}\Big)
		\end{align*}
		for any $q\in\N$, any $\omega\in\Omega_2$, any $b$ and any $(n,k)$.
		We deduce from Lemma \ref{lem:NoiseRegular} that there also exists $\Omega_{\infty}\subset\Omega$ with $\Pb(\Omega_{\infty})=1$ such that for any $\omega\in\Omega_{\infty}$,
		\begin{align*}
			\lim\limits_{q\to\infty}\rho_{nk}(s_q,x_q;\ps_q,\xp_q)(\omega)=\rho_{nk}(s,x;\ps,\xp)(\omega).
		\end{align*}
		Hence for any $\omega\in\Omega_0=\Omega_2\cap\Omega_{\infty}$,
		\begin{align*}
			\rho_{nk}(s,x;\ps,\xp)(\omega)&=\lim\limits_{q\to\infty}\rho_{nk}(s_q,x_q;\ps_q,\xp_q)(\omega)\\&\leq\lim\limits_{q\to\infty} \frac{C_{r_{\omega}}2^{-n/2}}{\sqrt{s_q}}\Big[n+ \log^+\frac{1}{|\xp_q-x_q|+\sqrt{\ps_q-s_q}}\Big]\Big(|\xp_q-x_q|+\sqrt{\ps_q-s_q}\Big)\\
			&=\frac{C_{r_{\omega}}2^{-n/2}}{\sqrt{s}}\Big[n+ \log^+\frac{1}{|\xp-x|+\sqrt{\ps-s}}\Big]\Big(|\xp-x|+\sqrt{\ps-s}\Big).
		\end{align*}
		The proof is then completed by taking $C_0(\omega)=C_{r_{\omega}}$ since $\Pb(\Omega_0)=1$.	 
	\end{proof}

	\begin{proof}[Proof of Corollary \ref{corol:Pseudometric}.] 
		Let $I=(a,\pa)$, with $0\leq a<\pa\leq1$. For every $\gamma\in[0,1]$, set $T^W_{\gamma}=T^W_{[0,\gamma]}$. Suppose first that $(a,\pa)$ is a couple of dyadic numbers. Let $n$ be the smallest non-negative integer such that $2^{-n-1}\leq |\pa-a|$. For every $\ell\geq n$, choose $a_{\ell}$ (respectively $\pa_{\ell}$) to minimise $|a-a_{\ell}|$ (respectively $|\pa-\pa_{\ell}|$) under the constraint $2^{\ell}a_{\ell}\in\Z$ (respectively $2^{\ell}\pa_{\ell}\in\Z$). Then $(a_n,\pa_n)$ is a couple of real numbers that are either equal or dyadic neighbours. A similar assertion is valid for  $(a_{\ell},a_{\ell+1})$ and $(\pa_{\ell},\pa_{\ell+1})$. Since $(a_{\ell},\ell\geq n)$ (respectively $(\pa_{\ell},\ell\geq n)$) converges to $a$ (respectively $\pa$) as $\ell$ goes to infinity, we have
		\begin{align*}
			\left|T^W_{I}[b](\ps,\xp)-T^W_{I}[b](s,x)\right|&=\left|T^W_{\pa}[b](\ps,\xp)-T^W_{\pa}[b](s,x)-T^W_{a}[b](\ps,\xp)+T^W_{a}[b](s,x)\right|\\
			&\leq\left|T^W_{\pa_n}[b](\ps,\xp)-T^W_{\pa_n}[b](s,x)-T^W_{a_n}[b](\ps,\xp)+T^W_{a_n}[b](s,x)\right|\\
			&\quad+\sum\limits_{\ell=n}^{\infty}\left|T^W_{a_{\ell}}[b](\ps,\xp)-T^W_{a_{\ell}}[b](s,x)-T^W_{a_{\ell+1}}[b](\ps,\xp)+T^W_{a_{\ell+1}}[b](s,x)\right|\\
			&\quad+\sum\limits_{\ell=n}^{\infty}\left|T^W_{\pa_{\ell}}[b](\ps,\xp)-T^W_{\pa_{\ell}}[b](s,x)-T^W_{\pa_{\ell+1}}[b](\ps,\xp)+T^W_{\pa_{\ell+1}}[b](s,x)\right|.
		\end{align*}
		We deduce from Theorem \ref{theo:Pseudometric} that
		\begin{align*}
			&\left|T^W_{\pa_n}[b](\ps,\xp)-T^W_{\pa_n}[b](s,x)-T^W_{a_n}[b](\ps,\xp)+T^W_{a_n}[b](s,x)\right|\\
			&\leq \frac{C_02^{-n/2}}{\sqrt{s}}\Big[n+\log^+\frac{1}{|\xp-x|+\sqrt{\ps-s}} \Big]\Big(|\xp-x|+\sqrt{\ps-s}\Big)
		\end{align*}
		and, for every $\ell\geq n$,
		\begin{align*}
			&\left|T^W_{a_{\ell}}[b](\ps,\xp)-T^W_{a_{\ell}}[b](s,x)-T^W_{a_{\ell+1}}[b](\ps,\xp)+T^W_{a_{\ell+1}}[b](s,x)\right|\\&\leq \frac{C_02^{-(\ell+1)/2}}{\sqrt{s}}\Big[\ell+1+\log^+\frac{1}{|\xp-x|+\sqrt{\ps-s}} \Big]\Big(|\xp-x|+\sqrt{\ps-s}\Big)
		\end{align*}
		and
		\begin{align*}
			&\left|T^W_{\pa_{\ell}}[b](\ps,\xp)-T^W_{\pa_{\ell}}[b](s,x)-T^W_{\pa_{\ell+1}}[b](\ps,\xp)+T^W_{\pa_{\ell+1}}[b](s,x)\right|\\&\leq \frac{C_02^{-(\ell+1)/2}}{\sqrt{s}}\Big[\ell+1+\log^+\frac{1}{|\xp-x|+\sqrt{\ps-s}} \Big]\Big(|\xp-x|+\sqrt{\ps-s}\Big).
		\end{align*}
		Thus, as $\sum\limits_{\ell=n+1}^{\infty}2^{-\ell/2}=(\sqrt{2}+1)2^{-n/2}$ and $\sum\limits_{\ell=n+1}^{\infty}(\ell+1)2^{-\ell/2}=(\sqrt{2}+1)(3+\sqrt{2}+n)2^{-n/2}$
		\begin{align*}
			&\left|T^W_{I}[b](\ps,\xp)-T^W_{I}[b](s,x)\right|\\&\leq \frac{C_02^{-n/2}}{\sqrt{s}}\Big[n+\log^+\frac{1}{|\xp-x|+\sqrt{\ps-s}} \Big]\Big(|\xp-x|+\sqrt{\ps-s}\Big)\\&\quad+\frac{2C_0}{\sqrt{s}}\Big(|\xp-x|+\sqrt{\ps-s}\Big)\sum\limits_{\ell=n}^{\infty}2^{-(\ell+1)/2}\Big[\ell+1+\log^+\frac{1}{|\xp-x|+\sqrt{\ps-s}} \Big]\\
			&\leq\frac{31C_02^{-n/2}}{\sqrt{s}}\Big[1+n+\log^+\frac{1}{|\xp-x|+\sqrt{\ps-s}} \Big]\Big(|\xp-x|+\sqrt{\ps-s}\Big)\\
			&\leq\frac{64C_0\sqrt{|\pa-a|}}{\sqrt{s}}\Big[1+\log^+\frac{1}{\pa-a}+\log^+\frac{1}{|\xp-x|+\sqrt{\ps-s}} \Big]\Big(|\xp-x|+\sqrt{\ps-s}\Big).
		\end{align*}
		The desired inequality follows by taking $C_1=64C_0$. The result in the general case follows from the continuity of the map $\gamma\longmapsto T^W_\gamma[b]$.
	\end{proof}

	
\end{document}